\documentclass[11pt,a4paper]{article}
\usepackage[a4paper,margin=1.8cm]{geometry}
\usepackage{amsmath}
\numberwithin{equation}{section}
\numberwithin{figure}{section}
\usepackage{amsfonts}
\usepackage{amssymb}
\usepackage{longtable}
\usepackage{amsthm}
\usepackage{enumerate}
\PassOptionsToPackage{normalem}{ulem}
\usepackage{ulem}
\usepackage{array}
\usepackage{cases}
\usepackage{graphicx}
\usepackage{url}

\usepackage{natbib}
\usepackage{comment}
\usepackage{color}
\usepackage{dsfont}
\usepackage{float}
\usepackage{subfigure}

\def\d{\mathrm{d}}

\def\as{\mathrm{a.s.}}

\newcommand{\R}{\mathbb{R}}
\newcommand{\E}{\mathbb{E}}

\newcommand{\Px}{\mathbb{P}}
\newcommand{\Pb}{\mathbb{P}}

\newcommand{\F}{\mathcal{F}}
\newcommand{\Fb}{\mathbb{F}}
\newcommand{\Fx}{\mathbb{F}}

\newcommand{\Qb}{\mathbb{Q}}

\newcommand{\G}{\mathcal{G}}
\newcommand{\pa}{\partial}

\definecolor{Magenta}{rgb}{1.0, 0, 1.0}

\definecolor{JungleGreen}{rgb}{0.0, 0.8, 0.10}

\definecolor{linkcolor}{rgb}{0,0,0.502}
\definecolor{urlcolor}{rgb}{1,0,0}

\newtheorem{theorem}{\protect Theorem}[section]
\newtheorem{proposition}[theorem]{Proposition}
\newtheorem{definition}[theorem]{\protect Definition}

\newtheorem{lemma}[theorem]{\protect Lemma}

\newtheorem{corollary}[theorem]{\protect Corollary}

\allowdisplaybreaks[4]

\title{Mean Field Stackelberg Game for Production and Carbon Emission Reduction with State Reflections}

\author{Lijun Bo \thanks{Email: lijunbo@ustc.edu.cn, School of Mathematics and Statistics, Xidian University, Xi'an, 710126, China.}
\and
Zhen Liu \thanks{Email: zhenliu001@cuhk.edu.hk, Department of Statistics and Data Science, The Chinese University of Hong Kong, Shatin, NT, Hong Kong.}
\and
Jingfei Wang \thanks{Email:wjf2104296@mail.ustc.edu.cn, School of Mathematical Sciences, University of Science and Technology of China, Hefei, 230026, China.}
\and
Jiacheng Zhang \thanks{Email: jiachengzhang@cuhk.edu.hk, Department of Statistics and Data Science, The Chinese University of Hong Kong, Shatin, NT, Hong Kong.}
}
\date{ }

\begin{document}
\maketitle

\begin{abstract}                         
Global warming, driven by anthropogenic carbon emissions with transboundary pollution characteristics and irreversible damage, poses an existential threat to human society. This paper develops a novel two-level Stackelberg game with mean field interaction of controls and common noise which integrates hierarchical decision-making under a state-reflected emission dynamics. A central regulator (leader) adjusts product prices to guide $n$ heterogeneous competing regions (followers) while enforcing a hard emission cap via a reflection mechanism that models emergency reductions through a local time process. We establish the existence of an approximate Stackelberg equilibrium and perform sensitivity analysis via Monte Carlo simulations.

\textbf{Keywords}: Carbon emissions; transboundary  pollution; mean field Stackelberg game; state reflection; local time.   

\end{abstract}

\section{Introduction}\label{sec:introduction}

Sustained growth in human-caused carbon emissions drives accelerating global warming and climate change, which constitute the top urgent threat to global ecosystems (\citealt{lashof1990relative} and \citealt{matthews2009proportionality}). As an unavoidable byproduct of industrial activity and energy use, carbon emissions are difficult to govern due to two inherent features. First, carbon emissions generate substantial cross-border spillovers: emissions produced within one nation spread across boundaries via atmospheric, hydrological and terrestrial channels, damaging the environments of other countries. Emissions from any single region alter the global climate, with resulting environmental costs shared collectively across the globe. 
Second, the atmosphere's carbon storage capacity is a finite, nonrenewable physical resource, and ecological harm from excess carbon buildup is effectively irreversible over human lifespans. This irreversibility is amplified by the climate system's notable pipeline commitment lag: even if future research reveals greater climate sensitivity than presently anticipated, warming locked in by historical emissions cannot be rapidly undone (\citealt{weitzman2009modeling}).

\quad To characterize those unique properties such as cross-border spillovers and irreversibility of environmental damage, we propose a two-level Stackelberg game under state reflection within the framework of transboundary pollution that can simultaneously capture multi-region strategic interactions and a strict cumulative emission constraint. 
The differential game literature on transboundary pollution is substantial. Early work predominantly adopted deterministic models to determine optimal emissions and allocate abatement costs, e.g., \cite{van1992international,dockner1993international,jorgensen2001incentive,breton2005differential} and see \cite{bertinelli2014carbon,  benchekroun2016impact, de2021equilibrium} for further related deterministic models. Stochastic extensions were pioneered by \cite{yeung2007dynamically} and subsequently applied to industrial pollution \cite{yeung2008cooperative} and emission permit trading \cite{li2014differential, chang2015modeling,li2016dynamic}.

\quad The vast majority of existing transboundary pollution models adopt a single-layer game structure, neglecting the hierarchical decision-making prevalent in real-world environmental governance, where a central authority sets policies implemented by regional entities. Stackelberg game theory explicitly captures this leader–follower relationship. \cite{Long1992} first applied it to transboundary pollution control, demonstrating the leader's first-mover advantage through policy commitment. Subsequent work extended Stackelberg differential games to hybrid energy markets with demand uncertainty \cite{Perera2022}, compared environmental taxes versus tradable permits under finite horizons \cite{Cerqueti2023}, and exploited state-separable structures to obtain closed-form solutions for polluting firm--government conflicts \cite{Halkos2014,Halkos2021}.  
Despite these advances, existing Stackelberg models for transboundary pollution lack a strict cumulative emission hard constraint—crucial for averting irreversible climate damage. 

\quad The introduction of a strict emission cap brings mathematical challenges because it requires modeling a state process confined to a bounded domain.  
Stochastic control and stochastic differential games under reflected diffusions have garnered significant attention in recent years. To name a few, \cite{Borkar05} study a ergodic drift control problem of reflected diffusions. \cite{Bayraktar2019} explore a mean field game (MFG) with state reflections in the context of large symmetric queuing systems under heavy traffic. \cite{BLY2021} handle an optimal tracking portfolio problem with capital injection. \cite{BWY2025} verify the existence of mean field equilibrium (MFE) for a MFG of controls with state reflection. Despite these advancements, the study of MFGs of controls in the presence of reflected state processes remains an open problem.

\quad To fill the aforementioned research gaps, we construct a new Stackelberg game for transboundary pollution, embedding hierarchical decision-making alongside a binding cumulative emission constraint. We study a finite-horizon framework consisting of one central leader authority and $n$ heterogeneous regional followers. The leader first commits to a production price adjustment policy; afterwards, followers engage in a Nash game and independently select production schedules to maximize individual profits. Local production generates region-level emissions that aggregate into a global pollution stock, which characterizes transboundary spillover effects. Followers are mutually coupled via the mean-field production term within total accumulated emissions, and their payoffs are affected by the leader’s policy through price-dependent production costs. By tuning prices to regulate output and pollution and implementing pollution abatement, the leader minimizes overall environmental governance costs while guaranteeing the mean cumulative emission stays below the fixed regulatory cap.

\quad Due to the large number of followers, the mean-field coupling and the cap constraint render exact Stackelberg–Nash solutions intractable. We therefore turn to establish $(\epsilon_1(n),\epsilon_2(n))$-Stackelberg equilibrium to characterize the best estimate of actual mean field behavior (\citealt{moon2018linear}). For this heterogeneous finite-region game, we proceed in two steps. First, given an arbitrary leader's strategy, we solve the followers' mean-field game by deriving the representative follower's best response strategy and then establish a leader-dependent $\epsilon(n)$-Nash equilibrium with $\lim_{n\to\infty}\epsilon(n)=0$. Next, taking $\epsilon(n)$-Nash strategies of followers as given, we solve the leader's optimal control problem and obtain a semi-explicit characterization of her optimal price adjustment and abatement strategies via the optimality system, Skorokhod theory and a probabilistic representation. We lastly show that the established strategies for the leader and the followers constitute an $(\epsilon_1(n),\epsilon_2(n))$-Stackelberg equilibrium with $\lim_{n\to\infty}\epsilon_i(n)=0$ for $i=1,2$. Lastly, we conduct numerical simulations using Monte Carlo methods to analyze the sensitivity of the equilibrium outcomes to model parameters.

\quad In contrast to prior study \cite{BLY2021} that solve the HJB equation via probabilistic approaches, our work resolves multiple non-trivial technical difficulties introduced by our more elaborate model setup. First, the reflected process $Y^{t,x,z}$ defined in \eqref{eq:Y} for our probabilistic representation corresponds to a geometric Brownian motion (GBM) reflected at the upper boundary, while \cite{BLY2021} only consider reflected drifted Brownian motion. Those references rely on the closed-form density for the maximum process of standard Brownian motion; however, such an explicit density is not guaranteed to exist under our model. We resolve this issue using Malliavin calculus: drawing on the arguments in \cite{Coutin2019SPL}, we prove the existence of density for the maximum process of $U^{t,z}$ in \eqref{eq:processU} and additionally confirm its local Lipschitz continuity property. Second, unlike drifted Brownian motion, the drift and diffusion coefficients of our reflected process $Y^{t,x,z}$ inherently depend on the initial state $x$. To eliminate this state dependence, we construct an auxiliary factor process $K^t$ in \eqref{eq:GBMK}, such that the product process $K^tY^{t,x,z}$ reduces to a drifted Brownian motion. Although this transformation mitigates GBM-specific complications, analyzing the regularity of the joint density becomes considerably more complex. We address this challenge in Lemma~\ref{lemma:density} by applying Girsanov’s theorem for measure change, which encapsulates all randomness from $K^t$ into an alternative probability measure. Lastly, Proposition \ref{prop:probPsi} merely confirms our probabilistic solution is a weak solution within a Sobolev space. We therefore apply the generalized It\^o's formula tailored to Sobolev spaces to complete a rigorous proof of the verification theorem.

\quad This paper is structured as follows. In Section \ref{sec:model}, we formulate a two-level Stackelberg game involving one leader and $n$ heterogeneous followers. Section \ref{sec:non-MFG} constructs an approximate Nash equilibrium for the population of $n$ followers. Section \ref{sec:leader} addresses the leader’s optimal control problem subject to penalized reflection constraints and then derives an approximate Stackelberg equilibrium of our two-level Stackelberg game. Numerical sensitivity is examined via experiments undertaken in Section~\ref{sec:number}.

\noindent{\bf Notations.} We list below some notations that will be frequently used throughout the paper:

\noindent $\mathcal{C}_T$ ($\mathcal{C}_T^+$): Set of real-valued (non-negative) continuous functions on $[0,T]$ under the norm $\|f\|_{\infty}:=\sup_{t\in[0,T]}|f(t)|$ for $f\in\mathcal{C}_T$; $\mathcal{P}(E)$ ($\mathcal{P}_p(E)$): Set of probability measures on $E$ (with finite $p$-order moments); $L^2_{\mathbb{F}}(0,T;E)$: Set of $\mathbb{F}$-adapted $E$-valued processes $X=(X(t))_{t\in[0,T]}$ with $\|X\|_2:=\sqrt{\mathbb{E}[\int_{0}^{T}\|X(t)\|_E^2dt]}<\infty$; $\nu(f):=\int f(\boldsymbol{\theta})\nu(d\boldsymbol{\theta})$ for a probability measure $\nu$; $L_{\rm loc}^{\infty}([0,T]\times E)$: Set of locally essentially bounded functions on $[0,T]\times E$, i.e., $f\in L_{\rm loc}^{\infty}([0,T]\times E)$ iff $f \in L^{\infty}(K)$ for any compact subset $K \subset [0,T]\times E$; $\mathcal{W}^{1,2,1;\infty}_{\rm loc}([0,T]\times E)$: Sobolev space consisting of functions $f: [0,T]\times E \to \mathbb{R}$ whose weak derivatives $\partial_t f$, $\partial_x f$, $\partial_{xx} f$ and $\partial_z f$ belong to $L^{\infty}_{\mathrm{loc}}([0,T]\times E)$.

\section{Problem Formulation}\label{sec:model}

In this section, we formulate a Stackelberg game with one central regulator (leader) and \(n\) heterogeneous regions (followers): the followers independently decide their production plans, while the leader  adjusts the product price and implements emission abatement measures subject to an emission cap constraint. Let $T\in(0,\infty)$ be the terminal time horizon. We consider a multinational economic system consisting of $n$ regions (nations or cities), each seeking to regulate production activities while collectively adhering to a average emission cap constraint. To formulate the stochastic optimal production-reduction control problem for this two-level cross-regional economy, we fix a filtered probability space $(\Omega,\F,\mathbb{F},\Px)$ equipped with the filtration $\Fx=(\F_t)_{t\in[0,T]}$ that satisfies the usual conditions and it supports a scalar Brownian motion $W=(W(t))_{t\in[0,T]}$. 

\subsection{Control-Distribution Dependent State Process with Reflection}

Denote by $X^{(n),\boldsymbol{q}}(t)$ the average accumulated carbon emission level across the $n$ regions at time $t\in[0,T]$ under the control strategy $\boldsymbol{q}=(\boldsymbol{q}(t))_{t\in[0,T]}$ with $\boldsymbol{q}(t)=(q_i(t))_{i=1}^n$. As a well-documented feature in environmental economics and control theory (see, e.g., \citealt{van1992international, dockner1993international, yeung2008cooperative}), transboundary pollution exhibits prominent cross-regional diffusion. Specifically, cross-border pollutants such as air and water contaminants generate widespread externalities: emissions originating from one region affect all other jurisdictions, implying that pollution constitutes a common global state rather than independent local outcomes. Under this setup (see also \citealt{BLW2026}), the dynamic evolution of average accumulated carbon emissions is governed by the following control-distribution dependent stochastic differential equation (SDE):
\begin{align}\label{eq:X-q}
d X^{(n),\boldsymbol{q}}(t)=&\left[\frac{1}{n} \sum_{i=1}^{n} \mu q_{i}(t)-\delta X^{(n),\boldsymbol{q}}(t) \right]	d t+\sigma X^{(n),\boldsymbol{q}}(t) d W(t), 
\end{align}
where $X^{(n),\boldsymbol{q}}(0)=x_{0}>0$ is the initial average emission. Here, the emission process of region $i$ is assumed to be proportional to the output process $q_{i}=(q_{i}(t))_{t\in[0,T]}$ in the sense that the emission process of region $i$ is given by $\mu q_{i}(t)$ with $\mu>0$ being the production-emission ratio coefficient. The parameter $\delta\in(0,1)$ is referred to as the carbon absorption rate in nature, and hence $\delta X^{(n),\boldsymbol{q}}(t)dt$ measures the average carbon absorption quantity of these $n$ regions within the period $(t,t+dt]$. The term $\sigma X^{(n),\boldsymbol{q}}(t) d W(t)$ models random measurement errors in the process of carbon emission, which cannot be traced and corrected (\citealt{QuickMarland19}), where $\sigma>0$ is the standard deviation of the measurement error per emission level.

\quad The average cumulative emissions are constrained by a prescribed deterministic process for projected cumulative emissions $Z=(Z(t))_{t\in[0,T]}$, expressed as:
\begin{align}\label{eq:E0-SDE}
d Z(t) = \Phi( Z(t) ) d t , \quad Z(0)=z_{0}\in\R_+:=(0,\infty), 
\end{align}
where $\Phi(\cdot):\mathbb{R}\to\mathbb{R}$ is a Lipschitz function with the Lipschitz coefficient $L>0$ and $\Phi(0)\geq0$, which is determined by the central regulator. The deterministic process $Z$ represents a targeted emission trajectory formulated according to environmental policies or emission reduction goals. 
In fact, the near‑linear ${\rm CO_2}$–warming relationship implies a finite 1.5°C carbon budget (\citealt{matthews2009proportionality,IPCC21,stern2008economics}); exceeding it risks irreversible tipping points, making a stringent absolute cap an indispensable hard constraint (\citealt{weitzman2009modeling, dietz2015endogenous,lenton2019climate}). 
This cap aligns regional emissions with the system‑wide budget and serves as a core condition that bridges regional emission activities and the system-level emission control target.

\quad Building upon \eqref{eq:X-q} and \eqref{eq:E0-SDE}, to ensure compliance with the emission target, all regions adopt an accumulated average abatement process $A^{(n)}=(A^{(n)}(t))_{t\in[0,T]}$, which can represent technologies such as carbon capture and storage (CCS). Consequently, the average accumulated emission process satisfies the following control-distribution dependent SDE with the upper reflection:
\begin{align}\label{eq:X}
d X^{(n),\boldsymbol{q}}(t)=&\left[\frac{1}{n} \sum_{i=1}^{n} \mu q_{i}(t)-\delta X^{(n),\boldsymbol{q}}(t) \right]	d t+\sigma X^{(n),\boldsymbol{q}}(t) d W(t)-  d A^{(n)}(t).
\end{align}
Here, the abatement process $A^{(n)}=(A^{(n)}(t))_{t\in[0,T]}$ is an $\mathbb{F}$-adapted and non-decreasing continuous process with $A^{(n)}(0)=0$ satisfying a.s., $\forall t\in[0,T]$,
\begin{align}\label{eq:An}
\int_{0}^{t} \mathbf{1}_{\{X^{(n),\boldsymbol{q}}(s)=Z(s)\}} d A^{(n)}(s) =A^{(n)}(t).
\end{align}
In other words, the process $A^{(n)}=(A^{(n)}(t))_{t\in[0,T]}$ increases strictly on $\{t\in[0,T];~X^{(n),\boldsymbol{q}}(t)=Z(t)\}$ only. Then, we have $X^{(n),\boldsymbol{q}}(t)\leq Z(t)$, a.s. for all $t\in[0,T]$.

\subsection{The Objectives of Regional Administrators and Central Regulator}

To introduce the objectives of regional administrators and the central regulator, we begin by specifying the admissible control set of the leader, denoted by $\mathbb U_0$. This set consists of real-valued adapted processes $u=(u(t))_{t\in[0,T]}$ satisfying $\|u\|_2\leq C_1$, where the constant $C_1$ depends only on $(T,z_0,L,m,M)$ and satisfies $C_1\geq \alpha\, Z(T)^2$ with $\alpha:=\alpha(T,z_0,L,m,M)$ as defined in \eqref{uniformL2_u}. For the followers, we define $\mathbb U_i$ as the set of real-valued adapted processes $q_i=(q_i(t))_{t\in[0,T]}$ such that $\|q_i\|_2\leq C_2$, where $C_2\geq \frac{M^2}{2m^2}T+\frac{\|Z\|_\infty^2}{2m^2}C_1$, and for which the corresponding state process $X^{(n),\boldsymbol q}$ in \eqref{eq:X} remains nonnegative.

\quad  The administrator (follower) of region $i$  seeks to enhance her economic profits, as does each of the other regional administrators, all of whom ultimately maximize their net gains from production.
The instantaneous profit objective of region $i$ under production output processes $\boldsymbol{q}\in\mathbb{U}^{(n)}:=\prod_{i=1}^n\mathbb{U}_i$ at time $t\in[0,T]$ and the assumption that the outputs of regions are homogeneous goods, can be expressed as:
\begin{align}\label{eq:Pit}
\Pi^{(n),\boldsymbol{q},u}_{i}(t):=P_{i}^{(n),\boldsymbol{q},u}(t) q_{i}(t)-{c}_{i}\left|q_{i}(t)\right|^{2},
\end{align}
where ${c}_i>0$ is the production cost parameter, and hence the cost of products is depicted by ${c}_{i}|q_{i}(t)|^2$ (\citealt{yeung2007dynamically}; \citealt{yeung2008cooperative}); while $P_i^{(n),\boldsymbol{q},u}(t)$ is the price of the output in region $i$ at time $t$, which obeys the dynamics (\citealt{hoel2001taxes,golosov2014optimal}): 
\begin{align}\label{eq:Pi}
P_{i}^{(n),\boldsymbol{q},u}(t)=a_{i}+u(t) X^{(n),\boldsymbol{q}}(t),\quad \forall t\in[0,T],
\end{align}
where $a_i>0$ is the stable price of goods in region $i$ and $u=(u(t))_{t\in[0,T]}\in\mathbb{U}_0$ as an $\mathbb{F}$-adapted process is the consistent price adjustment measure, which is determined by the central regulator. In lieu of \eqref{eq:Pit}, by choosing the production strategy $q_{i}=(q_{i}(t))_{t\in[0,T]}$, the administrator of region $i$ aims to maximize the region’s net profit given by, for $i=1,\ldots,n$,
\begin{align}\label{eq:Ji}
&J_{i}^{(n)}(\boldsymbol{q};u)=\mathbb{E}\left[\int_{0}^{T} \Pi^{(n),\boldsymbol{q},u}_{i}(t) dt\right]\\
&=\mathbb{E}\left[\int_{0}^{T} \left(\left(a_i +u(t) X^{(n),\boldsymbol{q}}(t) \right) q_{i}(t)-c_{i}\left| q_{i}(t)\right|^{2}\right) dt \right].\nonumber
\end{align}
Here, for $i\geq1$, $\boldsymbol{\theta}_{i}:=(a_{i}, {c}_{i})$ is called the type vector. We here assume that the type vectors $(\boldsymbol{\theta}_i)_{i\geq1}$ fall into a compact set $\mathcal{O}:=[m,M]^2$ with $0<m<M<\infty$.

\quad The central regulator (leader) seeks to achieve the emission reduction target by adjusting product prices. Accordingly, the central regulator aims to minimize the following objective functional given by
\begin{align}\label{eq:J0}
J_0^{(n)}(u;\boldsymbol{q})=&\mathbb{E}\bigg[\int_{0}^{T} c_{\rm P} (u(t))^2 d t+c_{\rm A}\int_{0}^{T} d A^{(n)}(t)+c_{\rm E}X^{(n),\boldsymbol{q}}(T) \bigg],
\end{align}
where $c_{\rm P}>0$ is the relative importance assigned to price stability, $c_{\rm A}>0$ refers to as the abatement cost coefficient that characterizes the relationship between abatement inputs and the resulting economic costs and $c_{\rm E}>0$ denotes the intensity of the penalty borne by the central regulator resulting from emission pollution at terminal horizon. 
 For notational simplicity, we assume that each of the remaining parameters $(\mu,\sigma,\delta,c_{\rm P},c_{\rm A},c_{\rm E})$ lies in the compact set $[m,M]$.

\subsection{Stackelberg Game Problem}

In this subsection, we define the approximate Stackelberg equilibrium associated with the objective functionals \eqref{eq:Ji} and \eqref{eq:J0} for the followers and leader. The game among followers is formulated as a finite $n$-player mean field game (MFG). We then first present the definition of the approximate Nash equilibrium for this $n$-follower game, under a given strategy adopted by the leader.

\begin{definition}[Approximate Nash Equilibrium (ANE)]\label{def:Nash-equil} 
Given a price adjustment policy $u=(u(t))_{t\in[0,T]}$ $\in\mathbb{U}_0$ implemented by the central regulator (leader), the vector of strategies $\boldsymbol{q}^{*,(n)}(u)=(q^{*,(n)}_{1}(u),\ldots,$ $q^{*,(n)}_{n}(u))\in\mathbb{U}^{(n)}$ is called an $\epsilon$-Nash equilibrium of the finite $n$-region game \eqref{eq:Ji} and \eqref{eq:X}, if there is $\epsilon=\epsilon(n)>0$ satisfying $\lim_{n\to\infty} \epsilon(n)=0$ such that, for all $i=1,\ldots,n$,
\begin{align*}
J_{i}^{(n)}(\boldsymbol{q}^{*,(n)}(u))\geq &\sup_{q_{i}\in\mathbb{U}_i} J_{i}^{(n)}(q_{i},\boldsymbol{q}^{*,(n),-i}(u))-\epsilon
\end{align*}	
with the vector of strategies $\boldsymbol{q}^{*,(n),-i}(u):=(q^{*,(n)}_{1}(u),\ldots,$ $q^{*,(n)}_{i-1}(u),$ $q^{*,(n)}_{i+1}(u),\ldots,q^{*,(n)}_{n}(u) )$.
\end{definition}

\quad Based on the approximate Nash equilibrium for a fixed leader's strategy in Definition \ref{def:Nash-equil}, we now define the approximate Stackelberg equilibrium for the leader and followers.
\begin{definition}[Approximate Stackelberg Equilibrium (ASE)]\label{def:SE}
Let $u^{*,(n)}\in\mathbb{U}_0$ be the leader's strategy and $\boldsymbol{q}^{*,(n)}(u^{*,(n)})=(q_1^{*,(n)}(u^{*,(n)}),\dots,q_n^{*,(n)}(u^{*,(n)}) )\in\mathbb{U}^{(n)}$ be the vector of strategies implemented by $n$ followers. Then, the vector of strategies $(u^{*,(n)},q_1^{*,(n)}(u^{*,(n)}),\dots,$ $q_n^{*,(n)}(u^{*,(n)}))$ is an $(\epsilon_1,\epsilon_2)$-Stackelberg equilibrium with respect to $(J_i^{(n)})_{i=0}^n$ if there exist $(\epsilon_1,\epsilon_2)=(\epsilon_1(n),\epsilon_2(n))\in\R_+^2$ satisfying $\lim_{n\to\infty}\epsilon_i(n)=0$ ($i=1,2$) such that
\begin{itemize}
\item[{\rm(i)}] $\boldsymbol{q}^{*,(n)}(u)=(q_1^{*,(n)}(u),\ldots,q_n^{*,(n)}(u))$ constitutes an $\epsilon_1$-Nash equilibrium under any $u\in \mathbb{U}_{0}$;
\item[{\rm(ii)}] It holds that  $ J_0^{(n)}(u^{*,(n)};\boldsymbol{q}^{*,(n)}(u^{*,(n)}))\leq$ $ \inf_{u\in\mathbb{U}_0} J_0^{(n)}(u;$ $\boldsymbol{q}^{*,(n)}(u))+\epsilon_2$.
\end{itemize}
\end{definition}

\section{Approximate Nash Equilibrium for Followers}\label{sec:non-MFG}

In Stackelberg games, the leader’s strategy is determined first, as it is based on the anticipation of the follower’s
optimal response. This sequential decision-making process arises from the fact that the leader’s choice influences the follower’s behavior, and the follower’s reaction must be considered when optimizing the leader’s strategy. Therefore, we apply the method of backward induction in solving this game problem for one leader and $n$ followers.

\quad In this section, we solve the followers' problem \eqref{eq:X} and \eqref{eq:Ji} by using the MFG approach for $n$ followers under a given consistent price adjustment strategy $u\in\mathbb{U}_0$ adopted by the leader. To do it, recall the type vector $\boldsymbol{\theta}_{i}=(a_i,c_i)\in \mathcal{O}$ of region $i=1,\ldots,n$, it induces a sequence of empirical measures $\nu^{(n)}:=\frac{1}{n}\sum_{i=1}^n\delta_{\boldsymbol{\theta}_i}$ for $n\geq1$. The following assumption is required to formulate the MFG problem:
\begin{itemize}
\item[$(\mathbf{A_{1}})$] there is a $\mathcal{O}$-valued r.v. $\boldsymbol{\theta}:=(a,c)$ on the probability space $(\Omega,\mathbb{F},\mathbb{P})$  independent of common Brownian motion $W$  with the law $\nu \in \mathcal{P}(\mathcal{O})$ such that $\nu^{(n)}(f)\to\nu(f)$ as $n\to\infty$ for all $f\in C_b(\mathcal{O})$.
\end{itemize}

\subsection{MFG for Representative Follower}

Let $\mathbb{F}^{\rm MF}=(\mathcal{F}^{\rm MF}_{t})_{t\in[0,T]}$ with $\mathcal{F}^{\rm MF}_{t}:=\sigma(W(s);~s\leq t)$ and $\mathbb{U}^{\rm MF}$ be the admissible control set consisting of $\mathbb{F}^{\rm MF}$-adapted real-valued processes $q=(q(t))_{t\in[0,T]}$ satisfying $\|q\|_2\leq C_2$. 
Then, the MFG problem for a representative follower is formulated as, for any continuous process $m_{q}^{u}=(m_{q}^{u}(t))_{t\in[0,T]}\in L_{\mathbb{F}^{\rm MF}}^2(0,T;\R)$,
\begin{align}\label{eq:MFG-followers}
\sup_{q\in \mathbb{U}^{\rm MF}}\bar{J}(q,u;m_{q}^{u})
& =\sup_{q\in \mathbb{U}^{\rm MF}}\mathbb{E}\left[\int_{0}^{T}\Big(\left(a+u(t) X^q(t;m_{q}^{u}) \right) q(t)-{c}\left|q(t)\right|^2\Big)dt\right]\\[0.4em]
\text{s.t.}~ X^q(t;m_{q}^{u})=x_0&+\int_0^t\left(\mu m_{q}^{u}(s)-\delta X^q(s;m_{q}^{u})\right) ds\nonumber+\int_0^t\sigma X^q(s;m_{q}^{u}) d W(s)-A(t)\leq Z(t),~~\text{a.s.,}\nonumber\\[0.4em]
&~\text{$t\mapsto A(t)$ is continuous and non-decreasing with } A(0)=0\nonumber\\[0.4em]
&\qquad~\text{and}~\int_0^t {\bf1}_{\{X^{q}(s;m_{q}^{u})=Z(s)\}}dA(s)=A(t),~\forall t\in[0,T].\nonumber
\end{align}
Intuitively, conditional on $\mathbb{F}^{\rm MF}$, the given continuous process $m_{q}^{u}=(m_{q}^{u}(t))_{t\in[0,T]}$ can be viewed as the limit of mean field term $\frac{1}{n}\sum_{i=1}^n q_i(t)$ for $t\in[0,T]$ when the number of regions $n$ is sufficiently large. This approximation can be heuristically justified by the law of large numbers. Hence, the limiting process $m_{q}^{u}$ is intrinsically determined by control processes. A key challenge is how to explicitly characterize limiting process $m_{q}^{u}$ so as to satisfy the (conditional) consistency condition. This is crucial for solving the above optimal control problem for the representative region, as the resulting best-response solution is necessary for deriving the corresponding fixed-point problem. This results in the so-called mean field equilibrium (MFE).

\quad For any continuous process $m_{q}^{u}=(m_{q}^{u}(t))_{t\in[0,T]}\in L_{\mathbb{F}^{\rm MF}}^2(0,T;\R)$, the SDE in~\eqref{eq:MFG-followers} always admits a unique solution $(X^{q}, A)=(X^{q}(t;m_q^u), A(t))_{t\in[0,T]}$, and hence the cost functional $\bar{J}(q,u;m_{q}^{u})$ given in \eqref{eq:MFG-followers} is well-defined. We next introduce the so-called dynamic Skorokhod problem  (c.f., Definition 2.4 in \citealt{BWY2025}):
\begin{definition}[Dynamic Skorokhod Problem]\label{def:Skorokhod}
Let $\tilde{x}=(\tilde{x}(t))_{t\in[0,T]}\in \mathcal{C}_T$ and $z=(z(t))_{t\in[0,T]}\in\mathcal{C}_T^+$ satisfy $\tilde{x}(0)\leq z(0)$. A pair of functions $(x,g)\in \mathcal{C}_T\times \mathcal{C}_T^{+}$ is called a solution to the dynamic Skorokhod problem for $(\tilde{x},z)$ if it holds that
\begin{enumerate}[{\rm(i)}]
\item $x(t):=\tilde{x}(t)-g(t)\leq z(t)$ for all $t\in[0,T]$;
\item $g(0)=0$ and $t\mapsto g(t)$ is continuous and non-decreasing;
\item $\int_{0}^{t} \mathbf{1}_{\{x(s)=z(s)\}} d g(s)=g(t)$ for $t\in[0,T]$.
\end{enumerate}
\end{definition}
Then, it follows from \cite{piliPenko2014} that the solution of dynamic Skorokhod problem given in Definition \ref{def:Skorokhod}  has the following form given by
\begin{lemma}\label{lem:Skorokhod}
Let $\tilde{x}=(\tilde{x}(t))_{t\in[0,T]}\in \mathcal{C}_T$ and $z=(z(t))_{t\in[0,T]}\in \mathcal{C}_T^+$ satisfy $\tilde{x}(0)\leq z(0)$. Then, there exists a unique solution $(x,g)\in \mathcal{C}_T\times\mathcal{C}_T^+$ to the dynamic Skorokhod problem $(\tilde{x},z)$. Furthermore, it holds that, for any $t\in[0,T]$, 
\begin{align*}
x(t)=\tilde{x}(t)-\sup_{s\in[0,t]}\left(\tilde{x}(s)-z(s)\right)^{+},
g(t)=\sup_{s\in[0,t]}\left(\tilde{x}(s)-z(s)\right)^{+}
\end{align*}
with $x^+=\max\{x,0\}$ for $x\in\R$.
\end{lemma}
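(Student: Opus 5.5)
The plan is to verify directly that the explicit pair satisfies the three conditions of Definition \ref{def:Skorokhod}, and then to prove uniqueness by a monotonicity (energy) argument. The dynamic boundary $z$ only enters through the shifted path $\tilde{x}-z$, so the proof should mirror the classical one-sided Skorokhod problem.

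\emph{Existence.} Set $g(t):=\sup_{s\in[0,t]}(\tilde{x}(s)-z(s))^{+}$ and $x:=\tilde{x}-g$. Because $\tilde{x}-z$ is continuous on the compact interval $[0,t]$, the running supremum $g$ is continuous and non-decreasing. Moreover $g(0)=(\tilde{x}(0)-z(0))^+=0$, since $\tilde{x}(0)\le z(0)$. This gives (ii). For (i), note that $g(t)\ge \tilde{x}(t)-z(t)$, which gives $x(t)\le z(t)$.

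For (iii), I will show that $g$ is constant on every open interval on which $x<z$. Suppose $x(t_0)<z(t_0)$. Then $\tilde{x}(t_0)-z(t_0)<g(t_0)$, and by continuity this strict inequality persists on a neighbourhood $(t_0-\eta,t_0+\eta)$. On that neighbourhood no new maximum of $(\tilde{x}-z)^+$ can be attained, so $g$ is constant there. Hence the Stieltjes measure $dg$ does not charge the open set $\{s:\,x(s)<z(s)\}$. It follows that $\int_0^t\mathbf{1}_{\{x(s)=z(s)\}}dg(s)=g(t)-g(0)=g(t)$.

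\emph{Uniqueness.} Let $(x_1,g_1)$ and $(x_2,g_2)$ be two solutions, and put $d:=g_1-g_2=x_2-x_1$. This function is continuous, of bounded variation, and satisfies $d(0)=0$. By the chain rule for continuous BV functions,
\begin{align*}
d(t)^2=2\int_0^t (x_2(s)-x_1(s))\,dg_1(s)-2\int_0^t (x_2(s)-x_1(s))\,dg_2(s).
\end{align*}
The first integral only charges times where $x_1=z\ge x_2$, so it is $\le0$. The second only charges times where $x_2=z\ge x_1$, so its integrand is $\ge 0$ and the term, with its minus sign, is $\le0$. Hence $d(t)^2\le0$, and therefore $g_1=g_2$ and $x_1=x_2$.

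\emph{Main obstacle.} The only delicate point is the support argument in (iii): one must justify that $dg$ vanishes on $\{x<z\}$ when $z$ is merely continuous rather than constant. Continuity of $\tilde{x}-z$ makes the neighbourhood argument above go through without change. Alternatively, one may simply cite the result of \cite{piliPenko2014} for this step.
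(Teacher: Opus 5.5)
Your proof is correct. It takes a different route from the paper only in that the paper gives no argument for this lemma at all: it simply invokes Pilipenko's (2014) theory of Skorokhod problems in time-dependent domains.

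You instead give the classical self-contained proof, transplanted to a moving boundary.
\begin{itemize}
\item For existence, you verify (i)--(iii) directly for the running-supremum pair. The support condition follows because $\tilde x-z$ stays strictly below the level $g(t_0)$ near any $t_0$ with $x(t_0)<z(t_0)$.
\item For uniqueness, you apply the chain rule $d(t)^2=2\int_0^t d(s)\,\mathrm{d}d(s)$ to the continuous BV difference $d=g_1-g_2=x_2-x_1$. The complementarity condition fixes the sign of each of the two Stieltjes integrals.
\end{itemize}

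The citation buys generality (time-dependent, possibly multidimensional or two-sided domains) at the cost of transparency. Your route is elementary and never uses $z\ge 0$, only continuity of $z$. It also gives the Lipschitz bound \eqref{Skorohod-Lip} that the paper uses right afterwards: from the explicit formula, $|g_1(t)-g_2(t)|\le\|\tilde x_1-\tilde x_2\|_\infty+\|z_1-z_2\|_\infty$, and then $x_i=\tilde x_i-g_i$.

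One detail in (iii) deserves to be written out. Constancy of $g$ on the left half of the neighbourhood of $t_0$ is not literally a statement about ``no new maxima''. It follows from
\[
g(t_0)=\max\bigl\{g(s),\ \max_{[s,t_0]}(\tilde x-z)^+\bigr\}
\]
for $s<t_0$ in the neighbourhood. When $g(t_0)>0$, the second term is strictly less than $g(t_0)$, because a continuous function that is pointwise below $g(t_0)$ on a compact interval attains a maximum below $g(t_0)$; hence $g(s)=g(t_0)$. When $g(t_0)=0$, monotonicity forces $g(s)=0$.
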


\quad Next, we introduce the set of functions $\mathcal{D}:=\{(\tilde{x},z)\in\mathcal{C}_T\times\mathcal{C}_T^+;~ \tilde{x}(0)\leq  z(0)\}$ and define the dynamic Skorokhod mapping $\Gamma:\mathcal{D}\to\mathcal{C}_T$ by $\Gamma(\tilde{x},z)=x$, where $(x,g)$ is a solution to the dynamic Skorokhod problem $(\tilde{x},z)$. It follows from Lemma~\ref{lem:Skorokhod} that $\Gamma:\mathcal{D}\to\mathcal{C}_T$ is Lipchitz continuous, i.e., for any $(\tilde{x}_{i},z_{i})\in\mathcal{D}$ with $i=1,2$,
\begin{align}\label{Skorohod-Lip}
&\left\|\Gamma(\tilde{x}_1,z_1)-\Gamma(\tilde{x}_2,z_2)\right\|_{\infty}\leq  2\left\|\tilde{x}_1-\tilde{x}_2\right\|_{\infty}+\left\|z_1-z_2\right\|_{\infty}.
\end{align}
We will use the mapping $\Gamma:\mathcal{D}\to\mathcal{C}_T$ to represent the reflected state process in \eqref{eq:MFG-followers}. To do it, let us introduce the process $\widetilde{X}^q=(\widetilde{X}^q(t))_{t\in[0,T]}$ which satisfies the dynamics:
\begin{align}\label{widetilde-X}
&d\widetilde{X}^q(t;m_{q}^{u})=\left(\mu m_{q}^{u}(t)-\delta X^{q}(t;m_{q}^{u})\right) d t+\sigma X^{q}(t;m_{q}^{u}) d W(t),~~\widetilde{X}^q(0;m_{q}^{u})=x_0\leq z_0.    
\end{align}
Then, $(X^q,A)=(X^q(t;m_{q}^{u}),A(t))_{t\in[0,T]}$ is the unique solution to the dynamic Skorokhod problem $(\widetilde{X}^{q},Z)=(\widetilde{X}^{q}(t;m_{q}^{u}),Z(t))_{t\in[0,T]}$ in the sense of Definition \ref{def:Skorokhod}. Consequently, $X^{q}=\Gamma(\widetilde{X}^q,Z)$ with the upper boundary $Z=(Z(t))_{t\in[0,T]}$ being given by \eqref{eq:E0-SDE} and the local time process being given by $A(t)=\sup_{s\leq t}(\widetilde{X}^{q}(s)-Z(s))^+$ for $t\in[0,T]$.

\subsection{Mean Field Equilibrium}

This section examines the best-response solution of the followers' representative control problem \eqref{eq:MFG-followers} subject to the leader's consistent price adjustment strategy $u\in\mathbb{U}_0$, and then establishes the corresponding mean field equilibrium (MFE).

\quad By solving \eqref{eq:MFG-followers} using the dynamic programming principle (DPP) and Lemma \ref{lem:Skorokhod}, we have the following lemma whose proof is omitted. 
\begin{lemma}\label{lem:bestres-F}
Let the leader's strategy $u\in \mathbb{U}_0$ and {a fixed continuous process} $m_{q}^{u}=(m_{q}^{u}(t))_{t\in[0,T]}\in L^2_{\mathbb{F}^{\rm MF}}(0,T;\R)$. Then, there exists a unique best production response strategy $q^{*,u}=(q^{*,u}(t))_{t\in[0,T]}$ for the representative follower's problem \eqref{eq:MFG-followers}, which is given by $q^{*,u}(t)= (2c)^{-1}(a+u(t) X^{*,u}(t;m_{q}^{u}))$ for $t\in[0,T]$, where the underlying state process $X^{*,u}=(X^{*,u}(t;m_{q}^{u}))_{t\in[0,T]}$ is given by, for all $t\in[0,T]$,
\begin{align}\label{eq:star-X-u-0}
&X^{*,u}(t;m_{q}^{u})=x_0+\int_0^t\left(\mu m_{q}^{u}(s)-\delta X^{*,u}(s;m_{q}^{u})\right) d s+\int_0^t\sigma X^{*,u}(s;m_{q}^{u}) d W(s) -A^{*,u}(t;m_{q}^{u})\leq Z(t).
\end{align}
Here, $A^{*,u}=(A^{*,u}(t;m_{q}^{u}))_{t\in[0,T]}$ is continuous and non-decreasing process which also satisfies that
\begin{align*}
A^{*,u}(t;m_{q}^{u})=&\sup_{s\leq t}\left[x_0-z_0+\int_{0}^{s} \big(\mu m_{q}^{u}(r)-\delta X^{*,u}(r;m_{q}^{u})-\Phi(Z(r))\big)d r+\sigma\int_{0}^{s} X^{*,u}(r;m_{q}^{u})  d W(r)\right]^{+}.    
\end{align*}
\end{lemma}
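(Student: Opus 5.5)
The key structural fact is that, once $m^u_q$ is fixed, the representative follower's state does not depend on her own control $q$. In \eqref{eq:MFG-followers} the drift of $X^q(\cdot;m^u_q)$ is $\mu m^u_q(t)-\delta X^q(t;m^u_q)$; the individual control $q$ does not enter it at all. So the plan is to show, first, that the state and local time are determined uniquely by $m^u_q$, and then to maximize the reward pointwise.

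\textbf{Step 1 (state and local time).} Fix $u\in\mathbb{U}_0$ and a continuous $m^u_q\in L^2_{\mathbb{F}^{\rm MF}}(0,T;\R)$. I will solve the reflected SDE \eqref{eq:star-X-u-0} by a Picard iteration composed with the Skorokhod map. For a continuous adapted $Y$, define
\[
\widetilde{Y}(t)=x_0+\int_0^t\bigl(\mu m^u_q(s)-\delta Y(s)\bigr)\,ds+\int_0^t\sigma Y(s)\,dW(s),
\]
and set $\mathcal{T}(Y):=\Gamma(\widetilde{Y},Z)$. By \eqref{Skorohod-Lip},
\[
\|\mathcal{T}(Y_1)-\mathcal{T}(Y_2)\|_\infty\le 2\|\widetilde Y_1-\widetilde Y_2\|_\infty .
\]
Combining this with BDG and Gronwall gives
\[
\mathbb{E}\Bigl[\sup_{s\le t}|\mathcal{T}Y_1-\mathcal{T}Y_2|^2\Bigr]\le C\int_0^t\mathbb{E}\Bigl[\sup_{r\le s}|Y_1-Y_2|^2\Bigr]ds,
\]
which yields existence of a unique fixed point $X^{*,u}$. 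The resulting process does not depend on $q$, and it is $\mathbb{F}^{\rm MF}$-adapted.

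Lemma \ref{lem:Skorokhod} then gives the local time as $A^{*,u}(t)=\sup_{s\le t}(\widetilde X(s)-Z(s))^+$. Writing $Z(s)=z_0+\int_0^s\Phi(Z(r))\,dr$ from \eqref{eq:E0-SDE}, this becomes exactly the displayed formula for $A^{*,u}$, with $X^{*,u}$ appearing inside the integrands.

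\textbf{Step 2 (pointwise maximization).} Because $X^q=X^{*,u}$ for every admissible $q$, the objective reduces to
\[
\bar J=\mathbb{E}\int_0^T\bigl(b(t)q(t)-c\,q(t)^2\bigr)\,dt,\qquad b(t)=a+u(t)X^{*,u}(t).
\]
Completing the square gives
\[
b(t)q(t)-c\,q(t)^2=\frac{b(t)^2}{4c}-c\Bigl(q(t)-\frac{b(t)}{2c}\Bigr)^2 .
\]
Hence the unique maximizer, up to $dt\otimes d\mathbb{P}$-null sets, is $q^{*,u}(t)=b(t)/(2c)$, by strict concavity since $c\ge m>0$. One could equally phrase this through the DPP and HJB mentioned in the paper, but the direct argument is cleaner.

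\textbf{Step 3 (admissibility).} It remains to check that $q^{*,u}\in\mathbb{U}^{\rm MF}$, which I expect to be the main obstacle. The pointwise bound is
\[
|q^{*,u}|^2\le \frac{a^2+u^2|X^{*,u}|^2}{2c^2}.
\]
The control $u$ is only $L^2$-bounded, so I need a pathwise bound on $X^{*,u}$ to control $u^2|X^{*,u}|^2$. The reflection gives $X^{*,u}\le Z(t)\le\|Z\|_\infty$, and nonnegativity of the state gives $X^{*,u}\ge0$. Nonnegativity is part of the admissibility definition; I would verify it by a comparison argument, since the drift at $0$ is $\mu m^u_q\ge0$ whenever $m^u_q\ge0$.

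With $0\le X^{*,u}\le\|Z\|_\infty$, we get
\[
\|q^{*,u}\|_2^2\le \frac{M^2T}{2m^2}+\frac{\|Z\|_\infty^2}{2m^2}\,\|u\|_2^2 .
\]
This is exactly the bound built into the constant $C_2$, with $\|u\|_2$ controlled through $C_1$. That confirms $q^{*,u}$ is admissible and completes the argument.
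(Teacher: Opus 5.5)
Your route is different from the paper's. The paper only says the lemma follows from the DPP/HJB equation for \eqref{eq:MFG-followers} together with Lemma~\ref{lem:Skorokhod}, and omits the proof. You instead observe that once $m^u_q$ is frozen, $q$ does not enter the dynamics, so Step~1 fixes $(X^{*,u},A^{*,u})$ independently of $q$. The problem then reduces to maximizing $\mathbb{E}\int_0^T(bq-cq^2)\,dt$ by completing the square. Where it applies, this is cleaner and more rigorous than the HJB route. The HJB route is awkward here because $u$ and $m^u_q$ are general adapted processes, so the follower's problem is not Markovian in $(t,x,z)$. Your argument also explains why the feedback involves no value function. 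Step~1 is fine.

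The gap is in Step~3, and it reaches back into Step~2. Since $u$ is only $L^2$-bounded, you need an $L^\infty$ bound on $X^{*,u}$, namely $0\le X^{*,u}\le Z$, for two things: finiteness of $\bar J$ on $\mathbb{U}^{\rm MF}$, and the bound $\|q^{*,u}\|_2\le C_2$. Your justification of $X^{*,u}\ge0$ does not work, for two reasons.

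First, nonnegativity of the state is part of the definition of $\mathbb{U}_i$ in the $n$-player game, not of $\mathbb{U}^{\rm MF}$. In the mean-field problem it could not constrain $q$ anyway, because $X^{*,u}$ does not depend on $q$.

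Second, your comparison argument needs $m^u_q\ge0$, which is not a hypothesis. Take $dG=(\delta+\sigma^2)G\,dt-\sigma G\,dW$ with $G(0)=1$. Then $d(X^{*,u}G)=\mu m^u_q G\,dt-G\,dA^{*,u}$. For $m^u_q\equiv-1$ this product is strictly decreasing, so the state becomes negative with positive probability and its negative part is not essentially bounded.

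One can then choose $u\in\mathbb{U}_0$, for instance a function of $X^{*,u}$, with $b=a+uX^{*,u}\notin L^2(dt\otimes d\mathbb{P})$. Testing $q$ proportional to truncations of $b$ gives $\sup_{\mathbb{U}^{\rm MF}}\bar J=+\infty$, so no best response exists. Even when $b\in L^2$, $b/(2c)$ can leave the ball $\|q\|_2\le C_2$, and then the constrained maximizer is not $b/(2c)$. So the lemma can only be proved for a restricted class of inputs.

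Restricting to $m^u_q\ge0$ is too weak for the paper's use, because the equilibrium input \eqref{cc} is negative wherever $u$ is sufficiently negative. The workable hypothesis is $X^{*,u}(\cdot;m^u_q)\ge0$. At the equilibrium input this holds by the integrating-factor argument of Lemma~\ref{lem:X-star-positive} applied to \eqref{eq:star-X-u-1}. There $uX^{*,u}$ is absorbed into the multiplicative factor, and the remaining drift $\frac{\mu}{2}\nu(\frac{a}{c})$ is positive.

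A second point your Step~3 skips is adaptedness. Taken literally, $\mathbb{U}^{\rm MF}$ requires $\mathbb{F}^{\rm MF}$-adaptedness. However, $q^{*,u}$ involves the type $\boldsymbol{\theta}=(a,c)$ from $(\mathbf{A_1})$, which is independent of $W$, and $u$, which is only $\mathbb{F}$-adapted. Pointwise maximization over $\R$ identifies the optimum only if the pointwise maximizer is itself admissible. Over $\mathcal{F}^{\rm MF}_t$-measurable controls one would instead get $\mathbb{E}[a+u(t)X^{*,u}(t)\mid\mathcal{F}^{\rm MF}_t]/(2\nu(c))$.

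You should state explicitly the two conventions the paper uses tacitly in the lemma following Definition~\ref{def:MFE}. First, the problem is solved for a frozen type, i.e.\ conditionally on $\boldsymbol{\theta}$. Second, $u$ is $\mathbb{F}^{\rm MF}$-adapted. With these conventions and the nonnegativity hypothesis above, your argument is complete.
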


\vspace{-0.4cm}
\quad Based on the best production response strategy $q^{*,u}=(q^{*,u}(t))_{t\in[0,T]}$ given in Lemma~\ref{lem:bestres-F}, we next provide the definition of MFE given by 
\begin{definition}[MFE]\label{def:MFE}
For a given leader's strategy $u\in\mathbb{U}_0$, let $q^{*,u}=(q^{*,u}(t))_{t\in[0,T]}\in \mathbb{U}^{\rm MF}$ be an admissible strategy. We say that $q^{*,u}$ is an MFE if $q^{*,u}$ is a best response solution for problem \eqref{eq:MFG-followers} corresponding to the choice of a continuous process $m_{q}^{u}=(m_{q}^{u}(t))_{t\in[0,T]}\in L^2_{\mathbb{F}^{\rm MF}}(0,T;\R)$ in~\eqref{eq:star-X-u-0}, denoted by ${m}^{*,u}_q$, satisfying the (conditional) consistency condition $\mathbb{E}[q^{*,u}(t)|\mathcal{F}_t^{\rm MF}]={m}^{*,u}_{q}(t)$ for all $t\in[0,T]$.
\end{definition}

\quad Then, we have
\begin{lemma}
For a given leader's strategy $u\in\mathbb{U}_0$, the MFE in the sense of Definition~\ref{def:MFE} is given by $q^{*,u}(t)=\frac{a -b m_{q}^{*,u}(t)}{2 {c}}$ for $t\in[0,T]$, where the equilibrium process $m_{q}^{*,u}=(m_{q}^{*,u}(t))_{t\in[0,T]}$ has the form, for $t\in[0,T]$,
\begin{align}\label{cc}
m_{q}^{*,u}(t)&=\nu\left(\frac{a}{2c}\right)+\nu\left(\frac{1}{2c}\right) u(t) X^{*,u}(t),
\end{align}
where we recall that $\nu\left(\frac{a}{2c}\right):=\int_{\mathcal{O}}\frac{a}{2{c}}\nu(d\boldsymbol{\theta})$ and $\nu\left(\frac{1}{2c}\right):=\int_{\mathcal{O}}\frac{1}{2{c}}\nu(d\boldsymbol{\theta})$ with $\boldsymbol{\theta}=(a,c)$. 
Here, $X^{*,u}=(X^{*,u}(t))_{t\in[0,T]}$ is the emission process at the equilibrium given by, for $t\in[0,T]$,
\begin{align}\label{eq:star-X-u-1}
X^{*,u}(t)&=x_0+\int_0^t\left[\frac{\mu}{2} \nu\left(\frac{a}{c}\right)+\left(\frac{\mu}{2} \nu\left(\frac{1}{c}\right) u(s) -\delta \right)X^{*,u}(s)\right] d s\nonumber\\
&\quad+\sigma\int_0^t X^{*,u}(s) d W(s)-A^{*,u}(t)\leq Z(t),  
\end{align}
where the local time process $A^{*,u}(t)=\sup_{s\leq t}[z_0-x_0+\int_{0}^{s} (\Phi(Z(r))-\frac{\mu}{2}\nu ( \frac{a}{ {c}} ) - (\frac{\mu}{2} \nu (\frac{1}{ {c}} )u(r) -\delta ) X^{*,u}(r) ) dr$ $-\int_{0}^{s}\sigma X^{*,u}(r) d W(r)]^{-}$ for $t\in[0,T]$.
\end{lemma}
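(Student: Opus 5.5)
The plan is to combine the best response of Lemma~\ref{lem:bestres-F} with the consistency condition of Definition~\ref{def:MFE}. This reduces the fixed-point problem for $m_q^{*,u}$ to a single reflected SDE driven only by $W$.

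\textbf{Step 1: consistency condition.} Fix $u\in\mathbb{U}_0$ and a candidate continuous process $m\in L^2_{\mathbb{F}^{\rm MF}}(0,T;\R)$. By Lemma~\ref{lem:bestres-F}, the best response is $q^{*,u}(t)=(2c)^{-1}(a+u(t)X^{*,u}(t;m))$. The state $X^{*,u}(\cdot;m)$ solves \eqref{eq:star-X-u-0}, and its driving data are $m$, $W$ and the deterministic $Z$. It is therefore $\mathcal{F}^{\rm MF}_t$-adapted and does not depend on the type $\boldsymbol{\theta}$. I will assume $u$ is $\mathbb{F}^{\rm MF}$-adapted (or $W$-measurable); otherwise the conditional expectation of $u$ would appear, and I would have to restrict to such $u$. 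By $(\mathbf{A_1})$, $\boldsymbol{\theta}=(a,c)$ is independent of $W$. Taking $\mathbb{E}[\,\cdot\,|\mathcal{F}^{\rm MF}_t]$ and pulling out the $\mathcal{F}^{\rm MF}_t$-measurable factor $u(t)X^{*,u}(t;m)$ gives
\[
\mathbb{E}[q^{*,u}(t)|\mathcal{F}^{\rm MF}_t]=\nu\Big(\frac{a}{2c}\Big)+\nu\Big(\frac{1}{2c}\Big)u(t)X^{*,u}(t;m).
\]
The consistency requirement $m=\mathbb{E}[q^{*,u}|\mathcal{F}^{\rm MF}]$ is therefore equivalent to \eqref{cc}, with $X^{*,u}$ evaluated at $m=m^{*,u}_q$.

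\textbf{Step 2: closed reflected SDE.} Substituting \eqref{cc} into the drift $\mu m(s)-\delta X(s)$ of \eqref{eq:star-X-u-0} gives the drift
\[
\tfrac{\mu}{2}\nu(a/c)+\big(\tfrac{\mu}{2}\nu(1/c)u(s)-\delta\big)X(s),
\]
which is exactly \eqref{eq:star-X-u-1}. Conversely, take any solution $X^{*,u}$ of \eqref{eq:star-X-u-1} and define $m$ by \eqref{cc}. This $m$ is continuous because $X^{*,u}$ is, provided $u$ has continuous paths or one only requires continuity in $L^2$. It is then a fixed point. The remaining task is to show well-posedness of \eqref{eq:star-X-u-1} and to identify the local time.

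\textbf{Step 3: well-posedness (main obstacle).} The coefficients are linear in $X$ with a random, only square-integrable coefficient $u$, so a Lipschitz Picard argument does not directly apply. I would first try to use the Skorokhod map $\Gamma$ of Lemma~\ref{lem:Skorokhod} together with the Lipschitz bound \eqref{Skorohod-Lip}. The plan is a contraction on $\mathcal{C}_T$ pathwise up to stopping times $\tau_N=\inf\{t:\int_0^t u^2\,ds\ge N\}$, and then letting $N\to\infty$ since $\|u\|_2<\infty$. An alternative is to remove the noise multiplicatively: multiply by the integrating factor $K(t)=\exp\big(-\int_0^t(\tfrac{\mu}{2}\nu(1/c)u-\delta)ds-\sigma W(t)+\tfrac{\sigma^2}{2}t\big)$. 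This turns the equation into a reflected ODE with a moving barrier $K Z$, which is solved pathwise by $\Gamma$. Uniqueness follows from the uniqueness part of Lemma~\ref{lem:Skorokhod}. Admissibility, i.e.\ $\|q^{*,u}\|_2\le C_2$, follows from $0\le X^{*,u}\le Z\le\|Z\|_\infty$ together with $\|u\|_2\le C_1$ and $a,c\in[m,M]$; this matches the choice of $C_2$.

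\textbf{Step 4: local time formula.} Apply Lemma~\ref{lem:Skorokhod} to $\tilde x=x_0+\int(\ldots)ds+\sigma\int X\,dW$ and $z=Z$, where $Z(s)=z_0+\int_0^s\Phi(Z)\,dr$. This gives $A^{*,u}(t)=\sup_{s\le t}(\tilde x(s)-Z(s))^+$. Using $(y)^+=(-y)^-$, this is the stated expression with $[\cdot]^-$.

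Finally, the formula $q^{*,u}=(a-b\,m^{*,u}_q)/(2c)$ in the statement has an undefined symbol $b$; I would read it as $q^{*,u}(t)=(a+u(t)X^{*,u}(t))/(2c)$, which is the form delivered by Lemma~\ref{lem:bestres-F}.
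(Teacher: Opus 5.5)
Your proposal is correct and follows the paper's own argument: the paper likewise inserts the best response of Lemma~\ref{lem:bestres-F} into the consistency condition of Definition~\ref{def:MFE}, evaluates $\mathbb{E}[q^{*,u}(t)|\mathcal{F}^{\rm MF}_t]$ using the independence of $\boldsymbol{\theta}$ from $W$ and the $\mathcal{F}^{\rm MF}_t$-measurability of $u(t)X^{*,u}(t;m)$, and substitutes the resulting $m^{*,u}_q$ into \eqref{eq:star-X-u-0} to obtain \eqref{eq:star-X-u-1}. Your additions beyond the paper are sound and fill in steps the paper leaves implicit: you make explicit that $u$ must be $\mathbb{F}^{\rm MF}$-adapted (the paper just asserts it), give the converse direction, prove well-posedness via the integrating factor $K$ (which reduces the equation to a pathwise Skorokhod problem with barrier $KZ$ and also yields the positivity $X^{*,u}>0$ you need for admissibility, because $x_0+\int_0^\cdot K\,\frac{\mu}{2}\nu(a/c)\,ds$ is nondecreasing), identify $A^{*,u}$ via $y^+=(-y)^-$, and read the undefined $b$ so that $q^{*,u}=(a+uX^{*,u})/(2c)$.
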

\vspace{-0.4cm}
\begin{proof}
Using Lemma~\ref{lem:bestres-F} and Definition \ref{def:MFE}, it suffices to find a continuous process $m_{q}^{*,u}=(m_{q}^{*,u}(t))_{t\in[0,T]}\in L_{\mathbb{F}^{\rm MF}}^2(0,T;\R)$ which satisfies the consistent condition:
\begin{align}\label{eq:mqstart}
m_{q}^{*,u}(t)&=\mathbb{E}\left[q^{*,u}(t)\big|\mathcal{F}_t^{\rm MF}\right]=\mathbb{E}\left[(2{c})^{-1}\left(a+u(t) X^{*,u}(t;m_{q}^{u})\right)\big|\mathcal{F}_t^{\rm MF}\right]\\
&=\int_{\mathcal{O}}\frac{a}{2{c}}\nu(d\boldsymbol{\theta})+ u(t) X^{*,u}(t;m_{q}^{u})\left(\int_{\mathcal{O}} \frac{1}{2{c}}\nu(d\boldsymbol{\theta})\right)\nonumber\\
 &=\nu\left(\frac{a}{2c}\right)+\nu\left(\frac{1}{2c}\right) u(t) X^{*,u}(t;m_{q}^{u}),\quad \forall t\in[0,T].\nonumber
\end{align}
The last equality holds since both $u(t)$ and $X^{*,u}(t;m_{q}^{u})$ are $\mathcal{F}_t^{\rm MF}$-measurable for any $t\in[0,T]$. Plugging $m_{q}^{*,u}=(m_{q}^{*,u}(t))_{t\in[0,T]}$ given by \eqref{eq:mqstart} into \eqref{eq:star-X-u-0}, we obtain $X^{*,u}=(X^{*,u}(t;m_{q}^{*,u}))_{t\in[0,T]}$ satisfies the dynamics \eqref{eq:star-X-u-1}. This yields that the equilibrium process $m_{q}^{*,u}(t)$ for $t\in[0,T]$ determined by \eqref{eq:mqstart} has the form given by \eqref{cc}.  
\end{proof}

\subsection{ANE for Finite $n$ Followers}

In this section, we establish an approximate Nash equilibrium of the finite $n$-follower game  problem \eqref{eq:Ji} and \eqref{eq:X} under a given consistent price adjustment strategy $u\in\mathbb{U}_0$ adopted by the leader.

\quad The main result on the approximate Nash equilibrium of finite $n$-follower game problem \eqref{eq:Ji} and \eqref{eq:X} is stated as follows:
\begin{theorem}\label{thm:eps-Nash}
Let Assumption $({\bf A_1})$ hold and $u\in\mathbb{U}_0$ be a strategy adopted by the leader. Let us introduce that, for $i=1,\ldots,n$, 
\begin{align}\label{eq:n-opt-q}
q_i^{*,(n)}(t;u)= \frac{a_i+u(t) X^{*,u,(n)}(t)}{2c_i},\quad \forall t\in[0,T].
\end{align}
Here, the underlying state process $X^{*,u,(n)}=(X^{*,u,(n)}(t))_{t\in[0,T]}$ satisfies that $X^{*,u,(n)}(0)=x_0$ and for $t\in(0,T]$,
\begin{align}\label{eq:n-opt-X-F}
d X^{*,u,(n)}(t)=&\left[\left(\frac{\mu}{2} \nu^{(n)}\left(\frac{1}{c}\right)u(t) -\delta\right) X^{*,u,(n)}(t)+\frac{\mu}{2} \nu^{(n)}\left(\frac{a}{c}\right) \right] d t\nonumber\\
&+\sigma X^{*,u,(n)}(t) d W(t)-d  A^{*,u,(n)}(t),
\end{align}
where we recall that $\nu^{(n)}=\frac{1}{n}\sum_{i=1}^n\delta_{\boldsymbol{\theta}_i}$ is the empirical measure associated with the type vectors and for $t\in[0,T]$,
\begin{align*}
A^{*,u,(n)}(t)&=\sup_{s\leq t}\Bigg[z_0-x_0+\int_{0}^{s} \left(\Phi(Z(r))-\frac{\mu}{2}\nu^{(n)}\left( \frac{a}{ {c}}\right) -\left(\frac{\mu}{2} \nu^{(n)}\left(\frac{1}{ {c}}\right)u(r) -\delta\right) X^{*,u,(n)}(r)\right) dr\\
&\quad-\int_{0}^{s}\sigma X^{*,u,(n)}(r) d W(r)\Bigg]^{-}.
\end{align*}
Then, the vector of strategies $\boldsymbol{q}^{*,(n)}(u)=(q_1^{*,(n)}(u),\ldots,$ $q_n^{*,(n)}(u) )\in\mathbb{U}^{(n)}$ constitutes an $\epsilon$-Nash equilibrium for some $\epsilon=\epsilon(n)$ satisfying $\lim_{n\to\infty}\epsilon(n)=0$.
\end{theorem}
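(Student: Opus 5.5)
Fix $u\in\mathbb{U}_0$ and $i\in\{1,\dots,n\}$. The plan is to compare follower $i$'s payoff in the $n$-player game with the payoff in an auxiliary \emph{frozen} problem. In the frozen problem the aggregate production of the other players is held fixed. We then use the fact that follower $i$'s own control enters the state only through the averaged drift $\frac{\mu}{n}q_i$, so a unilateral deviation moves the state by $O(n^{-1/2})$ in $L^2$.

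\textbf{Step 1: admissibility.} We check that $\boldsymbol{q}^{*,(n)}(u)\in\mathbb{U}^{(n)}$. The reflection gives $X^{*,u,(n)}\le Z\le \|Z\|_\infty$. Nonnegativity of $X^{*,u,(n)}$ should follow from the positive drift term $\frac{\mu}{2}\nu^{(n)}(a/c)>0$ and the linear multiplicative structure; comparison with a reflected GBM-type equation with positive inhomogeneity handles this. Then
\[
|q_i^{*,(n)}|^2\le \frac{a_i^2+|u|^2\|Z\|_\infty^2}{2c_i^2}\le \frac{M^2}{2m^2}+\frac{\|Z\|_\infty^2}{2m^2}|u|^2 ,
\]
and integrating gives $\|q_i^{*,(n)}\|_2\le C_2$ by the choice of $C_2$.

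\textbf{Step 2: the deviation system.} Let $q_i\in\mathbb{U}_i$ be arbitrary, and let $X^{(n),i}$ denote the state \eqref{eq:X} under $(q_i,\boldsymbol{q}^{*,(n),-i}(u))$. Write the drift of $X^{*,u,(n)}$ as $\frac{\mu}{n}\sum_j q_j^{*,(n)}-\delta X$. Both processes are Skorokhod images $\Gamma(\cdot,Z)$, so \eqref{Skorohod-Lip} applies. The difference of the unreflected parts equals
\[
\int_0^t\Big[\frac{\mu}{n}(q_i-q_i^{*,(n)})+\frac{\mu}{n}\sum_{j\ne i}\frac{u}{2c_j}(X^{(n),i}-X^{*,u,(n)})-\delta(X^{(n),i}-X^{*,u,(n)})\Big]ds+\int_0^t\sigma(X^{(n),i}-X^{*,u,(n)})\,dW .
\]
The coefficient $u(s)$ is unbounded, so a plain Gronwall argument fails. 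Instead we use the a priori bounds $0\le X\le \|Z\|_\infty$ for both states. I would bound $\|X^{(n),i}-X^{*,u,(n)}\|$ in the form $\mathbb{E}\sup_t|\cdot|^2\le C\,\mathbb{E}\int_0^T|u||\Delta X|\cdots$ combined with a stopping or localisation argument, or a stochastic Gronwall lemma with random coefficient $\int|u|\,ds$. The target is
\[
\mathbb{E}\Big[\sup_{t\le T}|X^{(n),i}(t)-X^{*,u,(n)}(t)|^2\Big]\le \frac{C}{n^2}\|q_i-q_i^{*,(n)}\|_2^2\le \frac{C}{n^2},
\]
uniformly in $q_i$ thanks to the $C_2$ bound. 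Boundedness of the states, $\le\|Z\|_\infty$, also gives the crude bound $|J_i(\cdot)|\le C$.

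\textbf{Step 3: the frozen problem.} Define the frozen problem for player $i$: maximise
\[
\mathbb{E}\int_0^T\big((a_i+uX^{*,u,(n)})q_i-c_i q_i^2\big)\,dt ,
\]
in which the state is fixed at the equilibrium path. Pointwise maximisation gives exactly $q_i^{*,(n)}$, so this control is optimal for the frozen problem. This matches Lemma \ref{lem:bestres-F}, with $m$ being the empirical aggregate.

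Now compare the true payoff with the frozen one:
\[
\big|J_i^{(n)}(q_i,\boldsymbol{q}^{*,-i})-\bar J_i(q_i)\big|\le \mathbb{E}\int_0^T|u|\,|q_i|\,|X^{(n),i}-X^{*,u,(n)}|\,dt .
\]
For the integrand, $|X^{(n),i}-X^{*,u,(n)}|\le 2\|Z\|_\infty$, and its $L^2$ size is $O(1/n)$ by Step 2. The factor $|u||q_i|$ is only in $L^1$ via Cauchy–Schwarz, so we need a uniform-integrability or truncation argument. Split on $\{|u||q_i|\le K\}$: on this set the bound is $K\cdot O(1/n)$, and the complement contributes at most $2\|Z\|_\infty\,\mathbb{E}[|u||q_i|\mathbf 1_{\{|u||q_i|>K\}}]$.

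\textbf{Main obstacle.} Uniformity in $q_i$ of this tail term is the main obstacle. To handle it, I would first try to bound $\mathbb{E}\int|u||q_i||\Delta X|$ directly by $\|u\|_2\,\|q_i\|_2^{1/2}(\cdots)$ using $|\Delta X|^2\le 2\|Z\|_\infty|\Delta X|$ together with an $L^1$ bound on $\Delta X$ of order $1/n$. Alternatively, I would restrict to deviations that do at least as well as the candidate, which are controlled in $L^2$ by the concavity $-c_iq_i^2$.

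Granted this comparison bound $\epsilon(n)\to0$, the chain
\[
J_i(q_i,\cdot)\le \bar J_i(q_i)+\epsilon\le \bar J_i(q_i^*)+\epsilon\le J_i(\boldsymbol{q}^*)+2\epsilon
\]
closes the argument, noting that $J_i(\boldsymbol{q}^*)=\bar J_i(q_i^*)$ exactly. The constants are uniform in $i$ since $\boldsymbol{\theta}_i\in\mathcal{O}$. Assumption $(\mathbf{A_1})$ is needed only to relate $\nu^{(n)}$ to the limiting MFE, not for the $\epsilon$-Nash property itself.
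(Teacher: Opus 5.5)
Your route differs from the paper's and is sound. The paper compares follower $i$'s payoff with an auxiliary problem driven by the \emph{limiting} state $\bar X^u$, built with $\nu$ instead of $\nu^{(n)}$. It proves $\E\|X^{*,u,(n)}-\bar X^u\|_\infty^2\to0$ and $\E\|X^{*,u,(n)}_{-i}-\bar X^u\|_\infty^2\to0$ in the appendix using $\nu^{(n)}\to\nu$, and then defers the payoff comparison to the argument of Lacker's Theorem~8.3. You instead freeze the state at the $n$-player path $X^{*,u,(n)}$. There $q_i^{*,(n)}$ is the exact pointwise maximiser and $J_i^{(n)}(\boldsymbol{q}^{*,(n)}(u))=\bar J_i(q_i^{*,(n)})$ holds exactly, so the only error is the $\frac{\mu}{n}(q_i-q_i^{*,(n)})$ perturbation of the common state. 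This avoids the convergence $X^{*,u,(n)}\to\bar X^u$ entirely. It also makes $(\mathbf{A_1})$ unnecessary for the theorem as stated, so your last remark is right, and it gives an $O(1/n)$ rate when $u$ is bounded. The paper's route buys the link to the limiting MFE, which the statement does not need.

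In the paper's deviation dynamics \eqref{eq:n-opt-X-F-i}, the other followers keep the fixed processes $q_j^{*,(n)}(t;u)$ computed along $X^{*,u,(n)}$ (open loop). So the term $\frac{\mu}{n}\sum_{j\neq i}\frac{u}{2c_j}(X^{(n),i}-X^{*,u,(n)})$ in your Step~2 is absent, and plain Gronwall with \eqref{Skorohod-Lip} gives $\E\sup_t|X^{(n),i}-X^{*,u,(n)}|^2\le Cn^{-2}\|q_i-q_i^{*,(n)}\|_2^2$. In your feedback reading, localising at $\tau_K=\inf\{t:\int_0^t|u|^2ds\ge K\}$ and bounding the difference by $\|Z\|_\infty$ on $\{\tau_K<T\}$ works. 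However, it only yields some uniform $\epsilon(n)\to0$, not the $C/n^2$ bound you state.

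The step you single out as the main obstacle closes easily, but neither of your two suggestions does it. Any H\"older pairing that puts $u$ in $L^2$ leaves $\E\int|q_i|^2|\Delta X|^2dt$ to control. The family $\{|q_i|^2\}$ is not uniformly integrable over the ball $\|q_i\|_2\le C_2$: a deviation of size $n$ on an event of probability $n^{-2}$ keeps $\|q_i\|_2$ bounded while making $\Delta X$ of order one there. Restricting to improving deviations gives nothing beyond the $L^2$ bound you already have. Instead, put $q_i$ in $L^2$ and use that $u$ is a single fixed process. Set $\Delta X:=X^{(n),i}-X^{*,u,(n)}$; then $|\Delta X|\le\|Z\|_\infty$, since the deviated state is nonnegative by the definition of $\mathbb{U}_i$. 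This gives
\[
\E\int_0^T|u||q_i||\Delta X|\,dt\le C_2\Big(K^2\,\E\int_0^T|\Delta X|^2dt+\|Z\|_\infty^2\,\E\int_0^T|u|^2\mathbf{1}_{\{|u|>K\}}dt\Big)^{1/2}.
\]
The second term depends on neither $q_i$ nor $n$ and vanishes as $K\to\infty$. The first is at most $K^2T\sup_{q_i}\E\sup_t|\Delta X|^2\to0$. Choosing $K=K_n\to\infty$ slowly gives the required uniform $\epsilon(n)$.

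Equivalently, your split on $\{|u||q_i|\le K\}$ works once you note $\E[|u||q_i|\mathbf{1}_A]\le C_2\|u\mathbf{1}_A\|_2$, so $\{|uq_i|\}$ is uniformly integrable. The paper does not spell this step out either. With it, your chain of inequalities closes, with $\epsilon(n)$ uniform in $i$ (since $\boldsymbol{\theta}_i\in\mathcal{O}$) but depending on $u$, which the statement allows.
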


\quad The proof of Theorem \ref{thm:eps-Nash} relies on the following auxiliary result.
\begin{lemma}\label{lem:X-star-positive}
The state process $X^{*,u,(n)}=(X^{*,u,(n)}(t))_{t\in[0,T]}$ given in \eqref{eq:n-opt-X-F} satisfies that $X^{*,u,(n)}(t) > 0$, a.s. for all $t\in [0,T]$.
\end{lemma}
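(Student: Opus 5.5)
We need to show that the reflected linear SDE \eqref{eq:n-opt-X-F} stays strictly positive. Write $\beta(t):=\frac{\mu}{2}\nu^{(n)}(\frac1c)u(t)-\delta$ and $\kappa:=\frac{\mu}{2}\nu^{(n)}(\frac ac)>0$; positivity of $\kappa$ holds because $a_i,c_i\in[m,M]$. The plan is to use variation of constants with a stochastic exponential. The reflection term only pushes the process \emph{down}, and only while the process sits at the boundary $Z$. So the key is to show that $Z$ is strictly positive, and that therefore the reflection can never drive $X^{*,u,(n)}$ to zero.

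\textbf{Step 1: the boundary is positive.} Since $\Phi$ is Lipschitz with $\Phi(0)\ge 0$, we have $\Phi(z)\ge -Lz$ for $z\ge0$. Comparison for the ODE \eqref{eq:E0-SDE} then gives $Z(t)\ge z_0e^{-Lt}>0$ on $[0,T]$.

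\textbf{Step 2: variation of constants.} Set
\[
\mathcal{E}(t):=\exp\Big(\int_0^t\big(\beta(s)-\tfrac{\sigma^2}{2}\big)ds+\sigma W(t)\Big)>0.
\]
This is well defined because $u\in\mathbb U_0$ gives $\int_0^T|u|\,ds<\infty$ a.s. By It\^o's formula applied to $Y:=\mathcal E^{-1}X^{*,u,(n)}$ (the cross-variation with the finite-variation term $A$ vanishes), we get
\[
Y(t)=x_0+\int_0^t\mathcal E(s)^{-1}\kappa\,ds-\int_0^t\mathcal E(s)^{-1}dA^{*,u,(n)}(s).
\]
Hence $X(t)=\mathcal E(t)Y(t)$, and positivity of $X$ is equivalent to positivity of $Y$.

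\textbf{Step 3: excursion argument (the main obstacle).} The difficulty is that $A$ can increase, so $Y$ is not monotone. I would handle this as follows. Suppose $\tau:=\inf\{t:X(t)=0\}\le T$ with positive probability. Since $X(0)=x_0>0$ and $Z>0$ with continuous paths, we have $X(\tau)=0<Z(\tau)$. Let
\[
\rho:=\sup\{s<\tau: X(s)=Z(s)\}
\]
(with $\rho=0$ if the set is empty). On $(\rho,\tau]$ we have $X<Z$, so by the support condition \eqref{eq:An} the process $A$ is constant there. Moreover $X(\rho)\ge\min(x_0,\inf_t Z(t))>0$.

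Restarting at $\rho$, on $[\rho,\tau]$ we get
\[
Y(t)=Y(\rho)+\int_\rho^t\mathcal E^{-1}\kappa\,ds\ \ge\ Y(\rho)>0,
\]
so $X(\tau)=\mathcal E(\tau)Y(\tau)>0$, a contradiction. One technical point needs care: $\rho$ is not a stopping time. However, the argument is pathwise, since the representation in Step 2 holds for all $t$ simultaneously a.s., so this causes no problem. Alternatively, one can argue directly that on each path the increment of $Y$ over any interval on which $A$ is flat is nonnegative.

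Therefore $X^{*,u,(n)}(t)>0$ for all $t\in[0,T]$ a.s.

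Existence and uniqueness of the solution are not at issue here: they follow from Lemma~\ref{lem:Skorokhod} and the Lipschitz property \eqref{Skorohod-Lip} combined with a standard fixed point argument, or locally via stopping if $u$ is unbounded.
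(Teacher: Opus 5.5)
Your proposal is correct and is essentially the paper's proof. Your integrating factor $\mathcal{E}^{-1}$ is exactly the paper's GBM $G$, and the contradiction comes from the same source: on an interval just before $\tau$ the reflection is inactive, so $GX$ increases there.

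The differences are minor refinements. You use the last boundary contact $\rho$ instead of a small interval $[\tau-\delta,\tau]$, which also removes the need for the paper's separate treatment of $t=T$. Your Step 1 proves $Z>0$ explicitly, which the paper uses without justification when it asserts $X(\tau)=0<Z(\tau)$.
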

\vspace{-0.4em}
\begin{proof}
Define $\tau:= \inf \{ t \in [0,T];~X^{*,u,(n)}(t) \leq 0 \}\wedge T$ with the convention $\inf\varnothing = +\infty$.
We argue by contradiction and assume $\Pb(\tau<T)>0$. Let $G=(G(t))_{t\in [0,T]}$ be the GBM satisfying $d G(t)=(\delta+\sigma^2-\frac{\mu}{2} \nu^{(n)} (\frac{1}{ {c}} )u(t)) G(t) d t-\sigma G(t) d W(t)$ with $G(0)=1$. The It\^o's formula yields that 
\begin{align}\label{tilde_G}
&d\left(X^{*,u,(n)}(t)G(t)\right)=\frac{\mu}{2}\nu^{(n)}\left(\frac{a}{ {c}}\right) G(t) d t
-G(t) d A^{*,u,(n)}(t).
\end{align}
By the continuity of $t\mapsto X^{*,u,(n)}(t)$, we have
$X^{*,u,(n)}(\tau )=0<Z^{(n)}(\tau)$  on $\{\tau<T\}$.
Consequently, for every $\omega\in\{\tau<T\}$,
there exists $\delta(\omega)>0$ such that $0<X^{*,u,(n)}(s,\omega)<Z^{(n)}(t,\omega)$ when $s\in[\tau(\omega)-\delta(\omega),\tau(\omega))$.
Introduce $\tilde G(t):=X^{*,u,(n)}(t)G(t)$ for $t\in[0,T]$. Then, $\tilde G(t)>0$ on $t\in[0,\tau)$. 
Integrating \eqref{tilde_G} over
$[\tau-\delta,\tau]$ gives that
\begin{align}\label{tau_delta-G}
&\tilde G(\tau)-\tilde G({\tau-\delta})=\int_{\tau-\delta}^{\tau}\frac{\mu}{2}\nu^{(n)}\left(\frac{a}{ {c}}\right)G(t)dt-\int_{\tau-\delta}^{\tau}G(t)d A^{*,u,(n)}(t),~~ \text{on } \{\tau<T\}.
\end{align}
\quad By the construction of $t\mapsto A^{*,u,(n)}(t)$, the reflecting behavior is inactive on
$[\tau-\delta,\tau]$, i.e., $d A^{*,u,(n)}(t)=0$ a.s. on this interval. Hence, by the definition of $\tau$,  one has $\tilde G({\tau})-\tilde G({\tau-\delta})=\int_{\tau-\delta}^{\tau}        \frac{\mu}{2}\nu^{(n)}\left(\frac{a}{ {c}}\right)G(t) dt>0$ on $\{\tau<T\}$. This contradicts the facts that
$\tilde G(\tau)=0$ and $\tilde G(\tau-\delta)>0$
on $\{\tau<T\}$. Thus, $\tau=T$ a.s..

\quad We next prove that $X^{*,u,(n)}(T)>0$ a.s.. Suppose otherwise that $\Pb(X^{*,u,(n)}(T)=0)>0$. Repeating the above argument on the event $\{X^{*,u,(n)}(T)=0\}$, one can construct $\delta>0$ such that $\tilde G(T)-\tilde G(T-\delta)=\int_{T-\delta}^{T}\frac{\mu}{2}\nu^{(n)}\left(\frac{a}{ {c}}\right) G(t)dt>0$ which contradicts that fact that $\tilde G(T)=0$. Hence, $X^{*,u,(n)}(T)>0$ a.s.. This immediately yields that $\tau=T$. Furthermore, since 
$X^{*,u,(n)}(T)>0$ a.s., we conclude the desired result.
\end{proof}
We now proceed to the proof of Theorem \ref{thm:eps-Nash}.
\vspace{-0.4em}
\begin{proof}[Proof of Theorem \ref{thm:eps-Nash}]
We introduce the following auxiliary stochastic control problem, for a fixed leader's strategy $u=(u(t))_{t\in[0,T]}\in\mathbb{U}_0$, 
\begin{align}\label{eq:aux-MFG-followers}
\sup_{q\in \mathbb{U}^{\rm MF}}\bar{J}_i(q;u)&:= \mathbb{E}\bigg[\int_{0}^{T} e^{-\rho t}\big(\left(a_i+u(t) \bar{X}^u(t)\right) q(t)- c_i(q(t))^2\big)dt\bigg],
\end{align}
subject to a.s.
\begin{align*}
\bar{X}^{u}(t)=&x_0+\int_{0}^{t}\bigg[\left(\mu \nu\left(\frac{1}{2 {c}}\right) u(s) -\delta \right)\bar{X}^{u}(s)+\mu \nu\left(\frac{a}{2 {c}}\right)\bigg]ds\nonumber\\
&+\int_{0}^{t}\sigma \bar{X}^{u}(s)d W(s)- \bar{A}(t)\leq Z(t),~~\forall t\in[0,T], 
\end{align*}
and $t\mapsto \bar{A}(t)$ is continuous and non-decreasing with $\bar{A}(0)=0$ and $\int_0^t {\bf 1}_{\{\bar{X}(s)=Z(s)\}}d\bar{A}(s)=\bar{A}(t)$ for $t\in[0,T]$. 

\quad By the DPP and solving the corresponding HJB equation of the auxiliary problem \eqref{eq:aux-MFG-followers}, the optimal (feedback) control strategy of problem \eqref{eq:aux-MFG-followers} is given by, for $i=1,\ldots,n$ and for any $t\in[0,T]$, 
\begin{align}\label{eq:opt-q-bar-i}
  \bar{q}_i^{*,u}(t)=\bar{q}_i^{*,u}(\bar{X}^u(t)):=\frac{a_i+ u(t) \bar{X}^{u}(t) }{2 {c}_i}.
\end{align}
Following the proof of Lemma~\ref{lem:X-star-positive}, we have $\bar{X}^u(t)>0$, a.s. for all $t\in[0,T]$. Moreover, it is not difficult to show that {(the proof may refer to Appendix \ref{sec:proof})}:
\begin{align}\label{eq:momentestite0}
\lim_{n\to\infty}  \mathbb{E}\left[\left\|X^{*,u,(n)}-\bar{X}^u\right\|_{\infty}^2\right]=0.
\end{align}
For $i=1,\ldots,n$, consider the vector of strategies $\boldsymbol{q}^{*,(n),-i}(u):=(q^{*,(n)}_{1}(u),\ldots,q^{*,(n)}_{i-1}(u),q^{*,(n)}_{i+1}(u),\ldots,q^{*,(n)}_{n}(u) )$ with $q^{*,(n)}_{j}(u)=(q^{*,(n)}_{j}(t;u))_{t\in[0,T]}$ being given by \eqref{eq:n-opt-q} for $j\neq i$. Let $X^{*,u,(n)}_{-i}=(X^{*,u,(n)}_{-i}(t))_{t\in[0,T]}$ be the state process under the vector of strategies $(q_i,\boldsymbol{q}^{*,(n),-i}(u))\in \mathbb{U}^{(n)}$ satisfying the following dynamics:
\begin{align}\label{eq:n-opt-X-F-i}
&d X^{*,u,(n)}_{-i}(t)=\left[\frac{\mu}{n} q_i(t)+\mu \frac{1}{n}\sum_{j\neq i}^{n}q_i^{*,(n)}(t;u)-\delta X^{*,u,(n)}_{-i}(t)\right]d t+\sigma X^{*,u,(n)}_{-i}(t) d W(t)-d  A^{*,u,(n)}_{-i}(t)\nonumber\\
&\quad\qquad\qquad=\left[\frac{\mu}{n} q_i(t) +\mu \frac{n-1}{n}\nu^{(n)}_{-i}\left(\frac{1}{2 {c}}\right)u(t) X^{*,u,(n)}(t) +\mu \frac{n-1}{n}\nu^{(n)}_{-i}\left(\frac{a}{2 {c}}\right)-\delta X^{*,u,(n)}(t)\right] d t\nonumber\\
&\quad\qquad\qquad\quad+\sigma X^{*,u,(n)}_{-i}(t) d W(t) -d  A^{*,u,(n)}_{-i}(t),
\end{align}
where the reflecting term $A^{*,u,(n)}_{-i}(t)$ for $t\in[0,T]$ is given by $A^{*,u,(n)}_{-i}(t)=\sup_{s\leq t}[z_0-x_0+\int_{0}^{s} (\Phi(Z(r))-\frac{\mu}{n} q_i(r) -(\mu \frac{n-1}{n}\nu^{(n)}_{-i}\left(\frac{1}{2 {c}}\right)u(r) -\delta) X^{*,u,(n)}(r)) dr -\mu \frac{n-1}{n}\nu^{(n)}_{-i}(\frac{a}{2 {c}}) s-\int_{0}^{s}\sigma X^{*,u,(n)}_{-i}(r) d W(r)]^{-}$. Here, $\nu^{(n)}_{-i}(f):=\frac{1}{n-1}\sum_{j\neq i}^{n} f(\boldsymbol{\theta}_j)$ with $\boldsymbol{\theta}_j=(a_j,c_j)$ and  $X^{*,u,(n)}=(X^{*,u,(n)}(t))_{t\in[0,T]}$ is given by~\eqref{eq:n-opt-X-F}. Then, we have {(the proof may refer to Appendix \ref{sec:proof})}:
\begin{align}\label{eq:sup-i-bar}
\lim_{n\to\infty}  \mathbb{E}\left[\left\|X_{-i}^{*,u,(n)}-\bar{X}^u\right\|_{\infty}^2\right]=0.
\end{align}
\quad On the other hand, by Lemma \ref{lem:X-star-positive}, it follows that $0<X^{*,u,(n)}(t)\leq Z(t)$ a.s. for all $t\in[0,T]$. This yields that $(X^{*,u,(n)}(t))^2\leq |Z(t)|^2$ a.s. for all $t\in[0,T]$. In view of \eqref{eq:n-opt-q}, we have, for $i=1,\dots,n$,
\begin{align*}
&\mathbb{E}\left[ \int_{0}^{T} \left|q_i^{*,(n)}(t;u)\right|^2dt \right] \leq \frac{M^2}{2m^2}T + \frac{1}{2m^2} \mathbb{E}\left[\int_{0}^{T} \left|u(t)X^{*,u,(n)}(t)\right|^2dt \right]\\
&\leq \frac{M^2}{2 m^2}T + \frac{ \|Z\|_{\infty}^2 }{2 m^2}\|u\|_2^2\leq \frac{M^2}{2 m^2}T + \frac{ \|Z\|_{\infty}^2 }{2 m^2}C_1\leq C_2. 
\end{align*}   
Hence,  $\boldsymbol{q}^{*,(n)}(u)=(q_{1}^{*,(n)}(u),\ldots,q_{n}^{*,(n)}(u) )\in\mathbb{U}^{(n)}$. Furthermore, using the convergences \eqref{eq:momentestite0} and \eqref{eq:sup-i-bar}, together with the Lipschitz continuity of optimal (feedback) control function $x\mapsto \bar{q}^{*,u}(x)$ given by \eqref{eq:opt-q-bar-i}, we can conclude that  $\boldsymbol{q}^{*,(n)}(u)=(q_{1}^{*,(n)}(u),\ldots,q_{n}^{*,(n)}(u) )\in\mathbb{U}^{(n)}$ is an ANE by adopting a similar argument in the proof of Theorem~8.3 of \cite{Lacker}.
\end{proof}

\section{The Leader's Optimal Control Problem with Penalized Reflection}\label{sec:leader}

Due to the nature of the Stackelberg game under consideration, the central regulator (leader) aims to minimize the following objective functional, for $u\in\mathbb{U}_0$,
\begin{align}
J_0^{(n)}(u;\boldsymbol{q}^{*,(n)}(u) )=&\mathbb{E}\bigg[ \int_{0}^{T} c_{\rm P} \left(u(t)\right)^2 d t+c_{\rm A}\int_0^T dA^{*,u,(n)}(t)  +c_{\rm E} X^{*,u,(n)}(T) \bigg],
\end{align}
where the state process $(X^{*,u,(n)},A^{*,u,(n)})=(X^{*,u,(n)}(t),$ $A^{*,u,(n)}(t))_{t\in[0,T]}$ is described as \eqref{eq:n-opt-X-F} with production strategy  $\boldsymbol{q}^{*,(n)}(u)$ given by \eqref{eq:n-opt-q}.

\subsection{The Leader's Value Function and HJB Equation}\label{subsec:LeaderVFHJB}

We define the value function by, for $(t,x,z)\in[0,T]\times\R_+^2$,
\begin{align}\label{eq:vtxz0}
V(t,x,z):=&\inf_{u\in\mathbb{U}_0 }\mathbb{E}_{t,x,z}\bigg[ \int_{t}^{T} c_{\rm P} \left(u(s)\right)^2 d s+c_{\rm A}\int_t^T dA^{*,u,(n)}(s) +c_{\rm E} X^{*,u,(n)}(T)\bigg],
\end{align}
where the conditional expectation operator $\mathbb{E}_{t,x,z}[\cdot]:=\mathbb{E}[\cdot|X^{*,u,(n)}(t)=x,Z(t)=z]$. By using the DPP, the value function $V(t,x,z)$ formally satisfies the following HJB equation with Neumann boundary condition, for $(t,x,z)\in[0,T]\times\R_+^2$,
\begin{align}\label{eq:HJB-L0-R}
&\min_{u\in\mathbb{R}}\bigg[\pa_{x}V(t,x,z)\left(\mu \nu^{(n)}\left(\frac{a}{2 {c}}\right)+\left(\mu \nu^{(n)}\left(\frac{1}{2 {c}}\right) u -\delta \right)x\right)+c_{\rm P}u^2 \bigg]\nonumber\\
&+\pa_{t} V(t,x,z)+\frac{1}{2}\sigma^2 x^2\pa_{xx} V(t,x,z)+\pa_{z}V(t,x,z)\Phi(z)=0 
\end{align}
with $\pa_{x}V(t,x,x)=c_{\rm A}$ for all $(t,x)\in[0,T]\times\R_+$ and $V(T,x,z)=c_{\rm E} x$ for $(x,z)\in\R_+^2$. 

\quad Assume that the value function $V(t,x,z)$ is smooth. Then, we have from the HJB equation \eqref{eq:HJB-L0-R} that, the optimal (feedback) control function is given by, for $(t,x,z)\in[0,T]\times\R_+^2$,
\begin{align}\label{eq:star-u-R}
u^{*,(n)}(t,x,z)=-\frac{\mu}{2 c_{\rm P}}\nu^{(n)}\left(\frac{1}{2 {c}}\right) x \pa_{x}V(t,x,z).
\end{align}
Plugging the feedback control \eqref{eq:star-u-R} into the HJB equation~\eqref{eq:HJB-L0-R} to have that, for $(t,x,z)\in[0,T)\times\R_+^2$,
\begin{align}
\begin{cases}
 \displaystyle  \pa_{t} V(t,x,z)+\pa_{x}V(t,x,z)\left(\mu \nu^{(n)}\left(\frac{a}{2 {c}}\right) -\delta x \right)+\pa_{z}V(t,x,z) \Phi(z)\\[0.4em]
 \displaystyle~-\frac{\mu^2}{4 c_{\rm P}}\left(\nu^{(n)}\left(\frac{1}{2 {c}}\right)\right)^2 x^2\left(\pa_{x} V(t,x,z)\right)^2+\frac{1}{2}\sigma^2 x^2\pa_{xx}^2V(t,x,z) =0,\\[0.4em]
 \displaystyle \pa_{x}V(t,x,x)=c_{\rm A},~~\forall (t,x)\in[0,T]\times\R_+,\\[0.4em]
 \displaystyle V(T,x,z)=c_{\rm E}x,~~\forall (x,z)\in\R_+^2.
\end{cases}   
\end{align}
By applying the Cole-Hopf transformation and we introduce that, for $(t,x,z)\in[0,T]\times\R_+^2$, 
\begin{align*}
\Psi(t,x,z):=\exp\left(-\frac{\mu^2}{2 c_{\rm P}\sigma^2 } \left(\nu^{(n)}\left(\frac{1}{2 {c}} \right)\right)^2 V(t,x,z) \right).  
\end{align*}
Then, the transform $\Psi(t,x,z)$ satisfies the following linear PDE: 
for $(t,x,z)\in[0,T]\times\R_+^2$,
\begin{align}\label{eq:Psi}
&\pa_{t} \Psi(t,x,z)+\pa_{x}\Psi(t,x,z)\left(\mu \nu^{(n)}\left(\frac{a}{2 {c}}\right) -\delta x\right)+\frac{1}{2}\sigma^2 x^2\pa_{xx}\Psi(t,x,z)+\pa_{z}\Psi(t,x,z) \Phi(z)=0
\end{align}
with Robin boundary conditions
\begin{align*}
\pa_{x}\Psi(t,x,x)&=-\frac{c_{\rm A}\mu^2}{2 c_{\rm P}\sigma^2 }\left(\nu^{(n)}\left(\frac{1}{2 {c}}\right)\right)^2\Psi(t,x,x),~~~
\Psi(T,x,z)=\exp\left(-\frac{c_{\rm E} \mu^2 }{2 c_{\rm P}\sigma^2}\left(\nu^{(n)}\left(\frac{1}{2 {c}}\right)\right)^2 x\right).    
\end{align*}

\subsection{The Solvability of Robin Problem \eqref{eq:Psi}}\label{subsec:RobinProblem}

To solve the above Robin problem \eqref{eq:Psi}, we adopt the probabilistic argument. To this purpose, we introduce the following probabilistic representation given by, for $(t,x,z)\in[0,T]\times\R_+^2$,
\begin{align}\label{eq:Psi-R}
&\Psi(t,x,z):=\mathbb{E}\left[\exp\left(-\eta^{(n)}\left( c_{\rm A}R^{t,x,z}(T)+ c_{\rm E}Y^{t,x,z}(T)\right) \right) \right],
\end{align}
where $\eta^{(n)}:=\frac{\mu^2}{2 c_{\rm P}\sigma^2 }\left(\nu^{(n)}\left(\frac{1}{2 {c}}\right)\right)^2 \in [\frac{m^2}{8M^5},\frac{M^2}{8m^5}]$ is uniformly bounded and the state processes $(Y^{t,x,z},R^{t,x,z})=(Y^{t,x,z}(s),R^{t,x,z}(s))_{s\in[0,T]}$ satisfy that, for $s\in[t,T]$,
\begin{align}\label{eq:Y}
d Y^{t,x,z}(s)=&\left(\mu \nu^{(n)}\left(\frac{a}{2 {c}}\right) -\delta Y^{t,x,z}(s)\right)ds-d R^{t,x,z}(s)+\sigma Y^{t,x,z}(s) d W(s),\quad Y^{t,x,z}(t)=x,
\end{align}
and $s\mapsto R^{t,x,z}(s)$ is a continuous and non-decreasing process that increases only on $\{s\in[t,T];~Y^{t,x,z}(s)=Z^{t,z}(s)\}$ with $R^{t,x,z}(t)=0$. Here, we recall that $Z^{t,z}=(Z^{t,z}(s))_{s\in[t,T]}$ solves \eqref{eq:E0-SDE} with $Z^{t,z}(t)=z\in\R_+$.

We further introduce the process $\tilde Y^{t,x,z}=(\tilde Y^{t,x,z}(s))_{s\in [t,T]}$ which satisfies that
\begin{align}\label{eq:tildeY}
&\tilde Y^{t,x,z}(s):=Y^{t,x,z}(s)+R^{t,x,z}(s)\\
&=x+\int_t^s\left(\mu \nu^{(n)}\left(\frac{a}{2 {c}}\right) -\delta Y^{t,x,z}(r)\right)dr+\int_t^s\sigma Y^{t,x,z}(r) d W(r).\nonumber
\end{align}
Then, using a similar argument used in the proof of  Lemma~\ref{lem:X-star-positive}, it is not difficult to conclude that $Y^{t,x,z}(s)>0$, a.s. for all $s\in[t,T]$. Consider the GBM $K^{t}=(K^{t}(s))_{s\in[t,T]}$ specified as, for $s\in[t,T]$,
\begin{align}\label{eq:GBMK}
&K^{t}(s):=\exp\left[-\left(\delta+\frac{\sigma^2}{2}\right)(s-t)+\sigma\left(W(s)-W(t)\right)\right].
\end{align}
By applying It\^{o}'s rule to $\bar{Y}^{t,x,z}(s):=(K^t(s))^{-1}Y^{t,x,z}(s)$ for $s\in [t,T]$, we have $\bar{Y}^{t,x,z}(t)=x$ and for $s\in(t,T]$,
\begin{align*}
d\bar{Y}^{t,x,z}(s)&=\mu\nu^{(n)}\left(\frac{a}{2  c}\right)\left(K^{t}(s)\right)^{-1} ds-\left(K^{t}(s)\right)^{-1} d R^{t,x,z}(s).
\end{align*}
Let $\bar{R}^{t,x,z}=(\bar{R}^{t,x,z}(s))_{s\in [t,T]}$ be a non-decreasing process which is defined by
\begin{align}\label{eq:barR}
\bar{R}^{t,x,z}(s):=\int_t^s\left(K^t(r)\right)^{-1}dR^{t,x,z}(r),~~ \forall s\in [t,T].
\end{align} 
Then, the pair of processes $(\bar{Y}^{t,x,z},\bar {R}^{t,x,z})=(\bar{Y}^{t,x,z}(s),$ $\bar {R}^{t,x,z}(s))_{s\in[t,T]}$ is the unique solution to the Skorokhod problem $(\int_t^{s}\mu\nu^{(n)}(\frac{a}{2c})(K^{t}(r))^{-1}dr,$ $(K^{t}(s))^{-1}Z^{t,z}(s))_{s\in[t,T]}$ in the sense of  Definition~\ref{def:Skorokhod}. Furthermore, we consider the process $U^{t,z}=(U^{t,z}(s))_{s\in [t,T]}$ which obeys that, for $s\in [t,T]$,
\begin{align}\label{eq:processU}
&U^{t,z}(s)=\mu\nu^{(n)}\left(\frac{a}{2 c}\right)\int_t^s\left(K^{t}(r)\right)^{-1}dr-\left(K^{t}(s)\right)^{-1}Z^{t,z}(s)\nonumber\\
&=-z+\int_t^s\left(\mu\nu^{(n)}\left(\frac{a}{2c}\right)-\Phi(Z^{t,z}(r))-(\delta+\sigma^2)Z^{t,z}(r)\right)\times \left(K^{t}(r)\right)^{-1}dr \nonumber\\
&\quad +\sigma\int_t^s\left(K^{t}(r)\right)^{-1}Z^{t,z}(r)dW(r).
\end{align}
Consequently, we have $\bar{R}^{t,x,z}(s)=(x+\sup_{r\in [t,s]}U^{t,z}(r))^{+}$ for $s\in[t,T]$. The weak solution of Robin problem \eqref{eq:Psi} is defined as follows:
\begin{definition}[Weak solution]\label{weak_solution}
Let $\mathcal{A}:=\{(x,z)\in\R_+^2;~x\leq z\}$. A function $\Psi(\cdot)\in \mathcal{W}_{\mathrm{loc}}^{1,2,1;\infty}([0,T] \times \mathcal{A})$ is called a weak solution to Robin problem \eqref{eq:Psi} if  for any test function $\phi \in {C}_0^\infty([0,T]\times \mathcal{A})$ with $\phi(0,x,z) = 0$, it holds that
\begin{align}\label{eq:weaksolution}
0&=\int_0^T\int_0^{\infty}\int_0^z \Psi(t,x,z) \Big\{ -\partial_t \phi(t,x,z) + \frac{1}{2}\sigma^2 x^2 \partial_{xx} \phi(t,x,z)\nonumber\\
&\quad+ \left( 2\sigma^2 x - \mu \nu^{(n)}\left(\frac{a}{2c}\right) + \delta x \right) \partial_x \phi(t,x,z)-\Phi(z)\partial_z\phi(t,x,z)+\left(\sigma^2+\delta-\Phi'(z)\right) \phi(t,x,z)\Big\}dxdzdt\nonumber\\
&\quad+\int_0^T\int_0^\infty \Psi(t,z,z)\bigg\{\bigg(\mu\nu^{(n)}\left(\frac{a}{2c}\right)-\delta z-\Phi(z)-\sigma^2 z\nonumber\\
&\quad-\frac{c_{\rm A}\mu^2}{4c_{\rm P}} \left(\nu^{(n)}\left(\frac{1}{2c}\right)\right)^2\bigg)\phi(t,z,z)-\frac{1}{2}\sigma^2 z^2 \partial_x \phi(t,z,z) \bigg\}dzdt\nonumber\\
&\quad+ \int_\mathcal{A} \exp\left(-\frac{c_{\rm E} \mu^2}{2c_{\rm P} \sigma^2} \left(\nu^{(n)}\left(\frac{1}{2c}\right)\right)^2 x \right)\phi(T,x,z)dxdz.
\end{align}
\end{definition}

\quad Then, we have
\begin{proposition}\label{prop:probPsi}
The probabilistic representation $\Psi(t,x,z)$ defined by \eqref{eq:Psi-R} belongs to $\mathcal{W}^{1,2,1;\infty}_{\rm loc}([0,T]\times \mathcal{A})$ and it is a weak solution to Robin problem \eqref{eq:Psi} in the sense of Definition \ref{weak_solution}. Furthermore, the optimal (feedback) control function given by \eqref{eq:star-u-R} can be characterized by
\begin{align}\label{eq:star-u-1}
&u^{*,(n)}(t,x,z)=-\frac{\mu x}{2 c_{\rm P}} \nu^{(n)}\left(\frac{1}{2 {c}}\right)\mathbb{E}\bigg[\exp\Big(-\eta^{(n)} (c_{\rm A}R^{t,x,z}(T)+ c_{\rm E} Y^{t,x,z}(T) )\Big)\\
&~\times\left(c_{\rm A}\nabla_x R^{t,x,z}(T)+c_{\rm E}\nabla_x Y^{t,x,z}(T)\right) \bigg] 
\Bigl(\mathbb{E}\Bigl[\exp\bigl(-\eta^{(n)}\left(c_{\rm A}R^{t,x,{z}}(T)+ c_{\rm E}Y^{t,x,z}(T)\right)\bigr)\Bigr] \Bigr)^{-1},\nonumber
\end{align}
where the gradient processes $\nabla_x Y^{t,x,z}=(\nabla_x Y^{t,x,z}(s))_{s\in[t,T]}$ and $\nabla_x R^{t,x,z}=(\nabla_x R^{t,x,z}(s))_{s\in[t,T]}$ are given by $\nabla_x Y^{t,x,z}(s)$ $=K^{t}(s)\boldsymbol{1}_{\{\tau^t_{x,z}> s\}}$ and $\nabla_x R^{t,x,z}(s)=K^t(\tau_{x,z}^t)\boldsymbol{1}_{\{\tau_{x,z}^t\leq s\}}$ for $s\in[t,T]$. Here, the hitting time $\tau_{x,z}^t$ is defined by
\begin{align}\label{eq:tau-u-t}
&\tau_{x,z}^t:=\inf\left\{s\in [t,T];~U^{t,z}(s)+x=0\right\}\\
&=\inf\bigg\{s\in [t,T];~z-\int_{t}^{s}\sigma Y^{t,x,z}(r) d W(r)+\int_{t}^{s} \left(\Phi(Z^{t,z}(r))-\mu \nu^{(n)}\left(\frac{a}{2 {c}}\right) +\delta Y^{t,x,z}(r)\right)dr= x\bigg\}.\nonumber
\end{align}
\end{proposition}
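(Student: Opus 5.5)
The plan is to use the explicit Skorokhod representation of $(Y^{t,x,z},R^{t,x,z})$ to obtain pathwise differentiability in $x$ and $z$. From the factorisation $Y^{t,x,z}=K^t\bar Y^{t,x,z}$ and $\bar R^{t,x,z}(s)=(x+\sup_{r\in[t,s]}U^{t,z}(r))^+$, we have
\[
Y^{t,x,z}(s)=K^t(s)\Bigl(x+\mu\nu^{(n)}\bigl(\tfrac{a}{2c}\bigr)\int_t^s (K^t(r))^{-1}dr-\bigl(x+\sup_{r\in[t,s]}U^{t,z}(r)\bigr)^+\Bigr).
\]
Also $R^{t,x,z}(T)=\int_t^T K^t(r)\,d\bar R^{t,x,z}(r)$. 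Neither $U^{t,z}$ nor $K^t$ depends on $x$. Hence the map $x\mapsto (Y^{t,x,z}(T),R^{t,x,z}(T))$ is Lipschitz, and it is differentiable off the event $\{x+\sup U^{t,z}=0 \text{ exactly at a time where the sup is attained}\}$.

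Differentiating gives the stated gradients. The term $\bar R$ starts increasing at $\tau^t_{x,z}$, the first time $x+U^{t,z}$ hits $0$. Before $\tau^t_{x,z}$ we get $\nabla_x Y=K^t$; after it, the $x$-dependence of $\bar Y$ cancels, so $\nabla_xY=0$. For $R$, writing $R(T)=\int K^t d\bar R$ and perturbing $x$ shifts the reflection onset at $\tau$, which gives $\nabla_xR(T)=K^t(\tau^t_{x,z})\mathbf 1_{\{\tau\le T\}}$. One can check this by integration by parts on $\int K^t d\bar R$.

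Dominated convergence then allows differentiation under the expectation in \eqref{eq:Psi-R}. The integrand is bounded by $1$, since $R,Y\ge0$ by the positivity argument of Lemma~\ref{lem:X-star-positive}, and the difference quotients are bounded by $C\sup_s K^t(s)$, which has all moments. This yields the formula for $\partial_x\Psi$ and hence \eqref{eq:star-u-1} via $\partial_xV=-\partial_x\Psi/(\eta^{(n)}\Psi)$. Note that $\Psi>0$ and is bounded below on compacts, which keeps this quotient well defined.

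The same route gives $\partial_z\Psi$, because $z$ enters only through $U^{t,z}$ and the flow $Z^{t,z}$, which is Lipschitz in $z$. Since $\Phi$ is only Lipschitz, we work with weak derivatives and use Rademacher's theorem. For $\partial_t\Psi$ we rely on the Markov/flow property. The main work is local boundedness of the second derivative $\partial_{xx}\Psi$. Differentiating $\partial_x\Psi$ again means differentiating expectations of $\mathbf 1_{\{\tau^t_{x,z}>T\}}$ and of $K^t(\tau)\mathbf 1_{\{\tau\le T\}}$ in $x$. This reduces to the law of $\sup_{r\in[t,T]}U^{t,z}(r)$ jointly with $K^t$: note that $\{\tau>T\}=\{\sup U^{t,z}<-x\}$.

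This is the step I expect to be the main obstacle. First, I would show that $\sup_{s}U^{t,z}(s)$ has a density that is locally Lipschitz, using Malliavin calculus in the spirit of \cite{Coutin2019SPL}. The key ingredient is nondegeneracy of the Malliavin derivative of the running max of a diffusion-type process with diffusion coefficient $\sigma (K^t)^{-1}Z^{t,z}>0$. Second, to handle the joint law with $K^t(T)$ or $K^t(\tau)$, I would change measure via Girsanov with density proportional to $K^t(T)e^{(\delta+\sigma^2/2)(T-t)}$. This absorbs the factor $K^t$ into a new probability measure, under which $U^{t,z}$ is still an Itô process with modified drift. The expectations then become integrals of the (new) density of the maximum, and differentiating in $x$ gives locally bounded quantities by the Lipschitz property of that density. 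I would isolate this as Lemma~\ref{lemma:density}.

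Finally, for the weak-solution identity, I would apply Itô's formula to $\Psi(s,Y^{t,x,z}(s),Z^{t,z}(s))$ in mollified form. Equivalently, use the martingale property of $M(s)=\exp(-\eta^{(n)}c_{\rm A}R^{t,x,z}(s))\Psi(s,Y^{t,x,z}(s),Z^{t,z}(s))$, which follows from the Markov property of $(Y,Z)$ and the tower property. This shows that $\Psi$ satisfies \eqref{eq:Psi} in the distributional sense inside $\mathcal A$, together with the Robin condition on $\{x=z\}$, where $dR$ acts.

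Then I would multiply by a test function $\phi$ and integrate by parts twice in $x$ and once in $z$ and $t$. This moves all derivatives onto $\phi$ and produces the adjoint coefficients $(2\sigma^2x-\mu\nu^{(n)}(\frac{a}{2c})+\delta x)$ and $(\sigma^2+\delta-\Phi'(z))$. The boundary integral over $\{x=z\}$ is obtained by using the Robin condition to eliminate $\partial_x\Psi(t,z,z)$, and the terminal term comes from $\Psi(T,\cdot)$. This bookkeeping is routine once the Sobolev regularity is available.
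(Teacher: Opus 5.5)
Your pathwise derivation of $\nabla_xY^{t,x,z}$ and $\nabla_xR^{t,x,z}$, your Lipschitz bound in $z$, and your Itô/martingale route to the weak formulation (with the adjoint coefficients you list) all agree with the paper. The gap is in the step you flag as the main obstacle: local Lipschitz continuity of $x\mapsto\partial_x\Psi(t,x,z)$. Your mechanism does not cover it.

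Write $\tau_i:=\tau^t_{x_i,z}$ with $x_1<x_2$, so that $\tau_2\le\tau_1$. Your Girsanov change with density proportional to $K^t(T)$ is exactly the paper's $\Qb_2$. It does handle the term from $\nabla_xY^{t,x,z}(T)=K^t(T)\mathbf 1_{\{\tau^t_{x,z}>T\}}$: after absorbing $K^t(T)$, only $\Qb_2(-x_2\le\sup_{r\in[t,T]}U^{t,z}(r)\le -x_1)$ and the Lipschitz dependence of the exponential weight remain. It does not handle the term from $\nabla_xR^{t,x,z}(T)=K^t(\tau^t_{x,z})\mathbf 1_{\{\tau^t_{x,z}\le T\}}$. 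There the GBM is evaluated at an $x$-dependent stopping time. The weight $\exp(-\eta^{(n)}(c_{\rm A}R^{t,x,z}(T)+c_{\rm E}Y^{t,x,z}(T)))$ is not $\F_{\tau^t_{x,z}}$-measurable, so your measure change leaves the $x$-dependent ratio $K^t(\tau^t_{x,z})/K^t(T)$ behind. The resulting quantity is not a functional of the law of $\sup_{[t,T]}U^{t,z}$ at the fixed time $T$. Conditioning on $\F_{\tau}$ first would require time regularity of $\Psi$ along the diagonal, which is not yet available at this stage.

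What you actually have to control is $K^t(\tau_1)-K^t(\tau_2)$ on $\{\tau_1\le T\}$. A direct Itô/BDG estimate of this increment only gives a bound of order $\bigl(\E\int_{\tau_2}^{\tau_1\wedge T}K^t(r)^2dr\bigr)^{1/2}=O(|x_1-x_2|^{1/2})$, because of its martingale part. That is Hölder-$\frac12$, not Lipschitz, so it does not yield $\partial_{xx}\Psi\in L^\infty_{\rm loc}$.

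The missing idea is the identity obtained from $x+U^{t,z}(\tau^t_{x,z})=0$:
\[
K^t(\tau^t_{x,z})=\frac{Z^{t,z}(\tau^t_{x,z})}{x+\mu\nu^{(n)}\bigl(\frac{a}{2c}\bigr)\int_t^{\tau^t_{x,z}}(K^t(r))^{-1}\,dr}\quad\text{on }\{\tau^t_{x,z}\le T\}.
\]
This expresses $K^t$ at the hitting time as a ratio of absolutely continuous functionals, and it gives $K^t(\tau^t_{x,z})\le Z^{t,z}(\tau^t_{x,z})/x$. The difference then splits into the paper's $J_1$ and $J_2$. Using $|Z^{t,z}(\tau_1)-Z^{t,z}(\tau_2)|\le\int_{\tau_2}^{\tau_1}|\Phi(Z^{t,z}(r))|\,dr$, these involve only $\Pb(\tau_2\le T<\tau_1)$ and expectations of $\int_{\tau_2}^{\tau_1}(\cdots)\,dr$. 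By Fubini and $\{\tau_2\le r<\tau_1\}=\{-x_2\le\sup_{[t,r]}U^{t,z}<-x_1\}$, they become $\int_t^T\int_{-x_2}^{-x_1}p(r,u;t,z)\,du\,dr$ and the same expression with $p^{\Qb_1}$. Here a second measure $\Qb_1$ absorbs $(K^t(r))^{-1}$. So you need densities of $\sup_{[t,r]}U^{t,z}$ at every intermediate time $r$, not just at $T$, and under all three measures $\Pb$, $\Qb_1$, $\Qb_2$. You need only their local boundedness in $u$, which is what Lemma~\ref{lemma:density} provides; the local Lipschitz continuity of the density you plan to prove is more than the argument requires. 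Your treatment of $\partial_t\Psi$ (``rely on the Markov/flow property'') is also only a sketch, but the point above is the substantive gap.
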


\quad To prove Proposition~\ref{prop:probPsi}, we need the following auxiliary result. Before presenting the auxiliary result, we define two new probability measures $\Qb_1$ and $\Qb_2$ by, given $t\in[0,T]$,
\begin{align*}
\frac{d\Qb_1}{d\Pb}|_{\F_T}=&\exp\left(-\sigma\left(W(T)-W(t)\right)-\frac{\sigma^2}{2}(T-t)\right),\\
\frac{d\Qb_2}{d\Pb}|_{\F_T}=&\exp\left(\sigma\left(W(T)-W(t)\right)-\frac{\sigma^2}{2}(T-t)\right).
\end{align*}
Define the processes $\tilde W^1=(\tilde W^1(s))_{s\in [t,T]}$ and $\tilde W^2=(\tilde W^2(s))_{s\in [t,T]}$ by $\tilde W^1(s):= W(s)+\sigma(s-t)$ and $\tilde W^2(s):=W(s)-\sigma(s-t)$ for $s\in[t,T]$. Then, by using the Girsanov's theorem, we deduce that $\tilde W^i$ is a scalar $(\Qb_i,\Fb)$-Brownian motion for $i=1,2$. Then, we have
\begin{lemma}\label{lemma:density}
Let $(t,z)\in [0,T]\times\R_+$. For fixed $s\in[t,T]$, the r.v. $\sup_{r\in [t,s]}U^{t,z}(r)$ has the density $u\mapsto p(s,u;t,z)$ under $\Pb$ and the density $u\mapsto p^{\Qb_i}(s,u;t,z)$ under $\Qb_i$ with $i=1,2$, respectively. Furthermore, the densities $u\mapsto p(s,u;t,z)$,  $u\mapsto p^{\Qb_1}(s,u;t,z)$ and $u\mapsto p^{\Qb_2}(s,u;t,z)$ are locally bounded. 
\end{lemma}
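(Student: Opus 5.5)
The plan is to follow the Malliavin-calculus approach of \cite{Coutin2019SPL} for the supremum of an Itô process. The key observation is that, with $K^t$ from \eqref{eq:GBMK}, the process $U^{t,z}$ in \eqref{eq:processU} is an Itô process $U^{t,z}(s)=-z+\int_t^s b(r)\,dr+\int_t^s \gamma(r)\,dW(r)$. Its coefficients are
\[
b(r)=\Big(\mu\nu^{(n)}\big(\tfrac{a}{2c}\big)-\Phi(Z^{t,z}(r))-(\delta+\sigma^2)Z^{t,z}(r)\Big)(K^t(r))^{-1},\qquad \gamma(r)=\sigma (K^t(r))^{-1}Z^{t,z}(r).
\]
Both are smooth functionals of $W$, because $Z^{t,z}$ is deterministic and $(K^t(r))^{-1}$ is an explicit exponential of $W(r)-W(t)$. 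They therefore lie in $\mathbb{D}^{1,p}$ for every $p$, with Malliavin derivatives $D_\theta (K^t(r))^{-1}=-\sigma (K^t(r))^{-1}\mathbf 1_{\{\theta\le r\}}$. Moreover $Z^{t,z}(r)>0$ on $[t,T]$: since $\Phi(0)\ge0$ and $\Phi$ is Lipschitz, the ODE \eqref{eq:E0-SDE} started at $z>0$ cannot reach $0$. Hence $\gamma(r)>0$ a.s. and $\inf_{r}Z^{t,z}(r)>0$, which is exactly the nondegeneracy needed.

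The first step is to show that $M_s:=\sup_{r\in[t,s]}U^{t,z}(r)$ belongs to $\mathbb{D}^{1,2}$, with $D_\theta M_s=D_\theta U^{t,z}(\hat\tau)$, where $\hat\tau$ is the a.s. unique argmax. Uniqueness of the argmax follows from the nondegenerate Brownian part; existence of the derivative follows from the standard Nualart lemma for suprema of continuous processes with $\mathbb{E}\sup_r\|DU(r)\|_H^2<\infty$. Next I verify $\|DM_s\|_H>0$ a.s. and invoke the Bouleau--Hirsch criterion, which gives absolute continuity of $M_s$ and hence the density $p(s,\cdot;t,z)$. For local boundedness I would use the quantitative version in \cite{Coutin2019SPL}. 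Writing $D_\theta U(\hat\tau)=\gamma(\theta)+\int_\theta^{\hat\tau}D_\theta b\,dr+\int_\theta^{\hat\tau}D_\theta\gamma\,dW$, one has $D_\theta U(\hat\tau)\approx\gamma(\theta)$ for $\theta$ close to $\hat\tau$. Localizing on $[\hat\tau-\varepsilon,\hat\tau]$ then gives a lower bound of $\|DM_s\|_H^2$ by roughly $\varepsilon\inf\gamma^2$, and negative moments of $\|DM_s\|_H^2$ follow. The density formula $p(u)=\mathbb{E}[\mathbf 1_{\{M_s>u\}}\delta(DM_s/\|DM_s\|_H^2)]$ then gives boundedness uniformly in $u$, and locally uniformly in $(s,z)$ away from $s=t$.

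For the measures $\Qb_1,\Qb_2$ there are two routes. Under $\Qb_i$ we have $W=\tilde W^i\mp\sigma(\cdot-t)$, so $U^{t,z}$ is again an Itô process driven by the $\Qb_i$-Brownian motion $\tilde W^i$. Its coefficients have the same structure: $(K^t)^{-1}$ becomes another exponential of $\tilde W^i$ with a modified drift, and the drift $b$ picks up the bounded correction $\mp\sigma\gamma$. The same argument therefore applies verbatim under $\Qb_i$. Alternatively, absolute continuity of $M_s$ under $\Qb_i$ follows directly from $\Qb_i\sim\Pb$. For local boundedness one can use Hölder: $\Qb_i(M_s\in B)\le\|d\Qb_i/d\Pb\|_{L^2}\Pb(M_s\in B)^{1/2}$. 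This alone only gives $p^{\Qb_i}\in L^2$-type control, so I prefer the first route of redoing the Malliavin estimate under $\Qb_i$.

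The main obstacle is the quantitative lower bound on $\|DM_s\|_H$ near the random argmax $\hat\tau$. It must control $\sup_{\theta\in[\hat\tau-\varepsilon,\hat\tau]}|\int_\theta^{\hat\tau}D_\theta\gamma\,dW|$ uniformly, despite $\hat\tau$ being non-adapted. I would handle this with a Kolmogorov/Garsia--Rodemich--Rumsey modulus bound on the two-parameter field $(\theta,r)\mapsto\int_\theta^r D_\theta\gamma\,dW$, as done in \cite{Coutin2019SPL}, exploiting the explicit exponential form of $D_\theta\gamma$ and the fact that $\inf Z^{t,z}>0$.
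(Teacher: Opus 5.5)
Your existence argument is the paper's: a.s.\ uniqueness of the argmax $\hat\tau$, then $M_s:=\sup_{r\in[t,s]}U^{t,z}(r)\in\mathbb{D}^{1,2}$ with $DM_s=DU^{t,z}(\hat\tau)$ (the paper uses Hayashi--Kohatsu-Higa and Nakatsu's lemma), then Bouleau--Hirsch, and Girsanov for $\Qb_i$. The Malliavin derivative is unaffected by the deterministic shift $\tilde W^i=W\pm\sigma(\cdot-t)$, and for mere absolute continuity $\Qb_i\sim\Pb$ already suffices, as you note.

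You state the right nondegeneracy condition $\|DM_s\|_H>0$, whereas the paper only records finiteness of $\int_t^T|D_rM_s|^2dr$. However, this positivity is not the obstacle you make it, and it needs no Garsia--Rodemich--Rumsey bound. Since $D_\theta b(r)=-\sigma b(r)$ and $D_\theta\gamma(r)=-\sigma\gamma(r)$ for $\theta\le r$, one gets $D_\theta U^{t,z}(r)=\gamma(\theta)-\sigma\bigl(U^{t,z}(r)-U^{t,z}(\theta)\bigr)$, which is the first line of \eqref{MalliavinU}. Hence $D_\theta M_s=\bigl(\gamma(\theta)-\sigma(M_s-U^{t,z}(\theta))\bigr)\mathbf{1}_{\{\theta\le\hat\tau\}}$ is continuous on $[t,\hat\tau]$ and equals $\gamma(\hat\tau)>0$ at $\theta=\hat\tau$. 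Moreover $\hat\tau>t$ a.s.\ for $s>t$.

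The genuine gap is the local boundedness step: the mechanism you sketch cannot work, for three reasons.

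\textbf{Negative moments fail.} Your bound $\|DM_s\|_H^2\gtrsim\varepsilon\inf\gamma^2$ requires $\hat\tau-t\ge\varepsilon$. But $\|DM_s\|_H^2\le(\hat\tau-t)\sup_\theta|D_\theta M_s|^2$, and the argmax falls within $\eta$ of $t$ with probability of order $\sqrt\eta$. So negative moments of $\|DM_s\|_H^2$ already fail at order $1/2$; for $F=\sup_{[0,1]}B$ one has $\|DF\|_H^2=\hat\tau$, which is arcsine distributed.

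\textbf{No second-order regularity.} $DM_s$ contains the indicator of the non-smooth random time $\hat\tau$. So nothing places $DM_s/\|DM_s\|_H^2$ in $\mathrm{Dom}\,\delta$.

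\textbf{It provably cannot be in $\mathrm{Dom}\,\delta$.} Nualart's formula $p(u)=\E[\mathbf{1}_{\{M_s>u\}}\delta(DM_s/\|DM_s\|_H^2)]$ would give a density continuous on all of $\R$. But $M_s\ge U^{t,z}(t)=-z$ and $\Pb(M_s\le -z+\epsilon)$ is of order $\epsilon$, so the density jumps at $u=-z$. Compare the density $\sqrt{2/\pi}\,e^{-x^2/2}\mathbf{1}_{\{x>0\}}$ of $\sup_{[0,1]}B$.

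A correct argument has two parts:
\begin{itemize}
\item a localized integration by parts of Florit--Nualart type for levels in compact subsets of $(-z,\infty)$, pairing $DM_s$ with a smooth $H$-valued variable on whose support $D_\theta M_s$ stays bounded away from zero on the relevant event (this is where Garsia--Rodemich--Rumsey-type functionals genuinely enter, and only first-order derivatives of $M_s$ are used);
\item a separate one-sided estimate $\Pb(-z\le M_s\le -z+\epsilon)\le C\epsilon$ at the boundary.
\end{itemize}
The paper does not carry this out either; it defers local boundedness to the proof of Theorem 2.1 in \cite{Coutin2019SPL}. Your sketch should therefore either be replaced by such a localized argument or reduced to that citation, rather than resting on the incorrect negative-moment and density-formula claims.
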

\vspace{-0.4em}
\begin{proof}
Note that $(\sigma(K^t(r))^{-1}Z^{t,z}(r))^2>0$ for all $r\in (t,T]$. Then, for $s\in(t,T]$, following the proof of Proposition 1 in \cite{Hayashi2013PA}, $r\mapsto U^{t,z}(r)$ attains its maximum on $[t,s]$ at a unique random point $\theta^{t,s}_z$, $\Qb_1$-$\as$. Furthermore, it is straightforward to verify that $U^{t,z}(s)$ is Malliavin differentiable (\citealt{Nualart}), and for any $r\in [t,T]$, the Malliavin derivative $D_rU^{t,z}(s)$ is given by
\begin{align}\label{MalliavinU}
&D_rU^{t,z}(s)=-\sigma\int_r^s\left(\mu\nu^{(n)}\left(\frac{a}{2 c}\right)-\Phi(Z^{t,z}(u))-(\delta+\sigma^2)Z^{t,z}(u)\right)(K^t(u))^{-1}du\nonumber\\
&\qquad+\sigma(K^t(r))^{-1}Z^{t,z}(r)\boldsymbol{1}_{\{r\leq s\}}-\sigma^2\int_r^s(K^t(u))^{-1}Z^{t,z}(u)dW(u)\nonumber\\
&=-\sigma\int_r^s\left(\mu\nu^{(n)}\left(\frac{a}{2c}\right)-\Phi(Z^{t,z}(u))-(\delta+2\sigma^2)Z^{t,z}(u)\right)\times(K^t(u))^{-1}du\nonumber\\
&\qquad+\sigma(K^t(r))^{-1}Z^{t,z}(r)\boldsymbol{1}_{\{r\leq s\}}-\sigma^2\int_r^s(K^t(u))^{-1}Z^{t,z}(u)d\tilde W^1(u).
\end{align}
Then, a simple calculation yields that
\begin{align*}
&\E^{\Qb_1}\Bigg[\sup_{s\in [t,T]}\left|U^{t,z}(s)\right|^2+\sup_{s\in [t,T]}\left(\int_t^T\left|D_rU^{t,z}(s)\right|^2dr\right)+\int_t^T\sup_{s\in [t,T]}\left|D_r U^{t,z}(s)\right|^2dr\Bigg]<\infty.
\end{align*}
In light of Lemma 2 in \citealt{Nakatsu2013SPL}, it follows that $\sup_{r\in [t,s]}U^{t,z}(r)\in  \mathbb{D}^{1,2}$ (the definition of $\mathbb{D}^{1,2}$ refers to Definition 1.2.1 of \citealt{Nualart}) and $D_r(\sup_{u\in [t,s]}U^{t,z}(u))=D_rU^{t,z}(\theta^{t,s}_z)$, $ds$-a.s., where $D_rU^{t,z}(\theta^{t,s}_z):=D_rU^{t,z}(u)|_{u=\theta^{t,s}_z}$. Hence, for any $s\in [t,T]$, $\Qb_1$-a.s.,
\begin{align*}
\int_t^T\left|D_r\left(\sup_{u\in [t,s]}U^{t,z}(u)\right)\right|^2dr<\infty.
\end{align*}
It then follows from Theorem 7.2.1 in \cite{Nualart} that $\sup_{r\in [t,s]}U^{t,z}(r)$ has a probability density $u\mapsto p^{\Qb_1}(s,u;t,z)$ under $\Qb_1$. The locally boundedness of $u\mapsto p^{\Qb_1}(s,u;t,z)$ follows from the proof of Theorem 2.1 in \cite{Coutin2019SPL}. The same argument can be also applied to $\Pb$ and $\Qb_2$ by using a suitable Girsanov's transformation, and hence the proof of the lemma is complete.
\end{proof}

\subsection{Proof of Proposition \ref{prop:probPsi}}

In this section, we can now prove Proposition \ref{prop:probPsi}. 

\begin{proof}[Proof of Proposition~\ref{prop:probPsi}]
We begin with the verification of the representations of gradient processes $\nabla_x Y^{t,x,z}$ and $\nabla_x R^{t,x,z}$. To do it, for any $x\in\R$, let $x_n\uparrow x$ as $n\to\infty$, it holds from \eqref{eq:barR} that, for any $s\in[t,T]$,
\begin{align*}
\frac{\bar R^{t,x_n,z}(s)-\bar R^{t,x,z}(s)}{x_n-x}=
\begin{cases}
 \displaystyle ~~~~~~~1, &  \tau_{x_n,z}^t\leq s;\\[0.6em]
 \displaystyle -\frac{\bar{R}^{t,x,z}(s)}{x_n-x}, & \tau_{x,z}^t\leq s<\tau_{x_n,z}^t;\\[0.6em]
 \displaystyle ~~~~~~~~0, & \tau_{x,z}^t>s.
\end{cases}
\end{align*}
Simple calculations yield that
\begin{align*}
&\frac{\bar Y^{t,x_n,z}(s)-\bar Y^{t,x,z}(s)}{x_n-x}=1-\boldsymbol{1}_{\{\tau_{x_n,z}^t\leq s\}}+\frac{\bar{R}^{t,x,z}(s)}{x_n-x}\boldsymbol{1}_{\{\tau_{x,z}^t\leq s<\tau_{x_n,z}^t\}}.
\end{align*}
It then holds that
\begin{align*}
&\limsup_{n\to\infty} \frac{\bar Y^{t,x_n,z}(s)-\bar Y^{t,x,z}(s)}{x_n-x}\leq \lim_{n\to\infty} \boldsymbol{1}_{\{\tau_{x_n,z}^t>s\}}=\boldsymbol{1}_{\{\tau_{x,z}^t>s\}}.
\end{align*}
On the other hand, we have by definition that
\begin{align*}
    &\frac{\bar Y^{t,x_n,z}(s)-\bar Y^{t,x,z}(s)}{x_n-x}=\boldsymbol{1}_{\{\tau_{x,z}^t>s\}}+\left(1+\frac{\bar{R}^{t,x,z}(s)}{x_n-x}\right)\boldsymbol{1}_{\{\tau_{x,z}^t\leq s<\tau_{x_n,z}^t\}}\\
    &=\boldsymbol{1}_{\{\tau_{x,z}^t>s\}}+\frac{x_n+\sup_{r\in t,s]}U^{t,z}(r)}{x_n-x}\boldsymbol{1}_{\{\tau_{x,z}^t\leq s<\tau_{x_n,z}^t\}}\geq \boldsymbol{1}_{\{\tau_{x,z}^t>s\}}.
\end{align*}
Here, the last inequality holds from  the fact $x_n+\sup_{r\in t,s]}U^{t,z}(r)\leq 0$ when $s<\tau_{x_n,z}^t$. 
Then we deduce that\begin{align*}
    \lim_{n\to\infty}\frac{\bar Y^{t,x_n,z}(s)-\bar Y^{t,x,z}(s)}{x_n-x}=\boldsymbol{1}_{\{\tau_{x,z}^t>s\}}.
\end{align*}
Consequently, we have, for all $s\in[t,T]$,
\begin{align*}
&\lim_{n\to\infty} \frac{Y^{t,x_n,z}(s)-Y^{t,x,z}(s)}{x_n-x}=\lim_{n\to\infty}\frac{(\bar Y^{t,x_n,z}(s)-\bar Y^{t,x,z}(s))K^{t}(s)}{x_n-x}=K^{t}(s)\boldsymbol{1}_{\{\tau_{x,z}^t>s\}}.
\end{align*}
The same argument can also be applied to the case where $x_n\downarrow x$. Furthermore, by the chain rule and \eqref{eq:GBMK}, for any $s\in[t,T]$,
\begin{align*}
\nabla_x\tilde Y^{t,x,z}(s)&=1-\int_t^s\delta\nabla_x Y^{t,x,z}(r)dr+\int_t^s\sigma\nabla_xY^{t,x,z}(r)dW(r)\\
&=1-\int_t^{\tau_{x,z}^t\wedge s}\delta K^{t}(r)d r+\int_t^{\tau_{x,z}^{t}\wedge s}\sigma K^t(r)dW(r)=K^{t}(\tau_{x,z}^{t}\wedge s).
\end{align*}
Then $\nabla_x R^{t,x,z}(s)=\nabla_x\tilde{Y}^{t,x,z}(s)-\nabla_xY^{t,x,z}(s)=K^{t}(\tau_{x,z}^{t}\wedge s)-K^{t}(s)\boldsymbol{1}_{\{\tau_{x,z}^t>s\}}=K^{t}(\tau_{x,z}^{t})\boldsymbol{1}_{\{\tau_{x,z}^t\leq s\}}$. On the other hand, using DCT and the chain rule, it follows from \eqref{eq:Psi-R} that
\begin{align}\label{eq:paxPsi}
&\pa_x\Psi(t,x,z)\nonumber\\
&=-\eta^{(n)}\E\left[\exp\left(-\eta^{(n)}\left(c_{\rm A} R^{t,x,z}(T)+c_{\rm E}Y^{t,x,z}(T)\right)\right)\left(c_{\rm A}\nabla_x R^{t,x,z}(T)+c_{\rm E}\nabla_x Y^{t,x,z}(T)\right)\right]\nonumber\\
&=:I_1(t,x,z)+I_2(t,x,z),
\end{align}
where the terms $I_i(t,x,z)$ for $i=1,2$ are given by
\begin{align*}
I_1(t,x,z)&:= -\eta^{(n)}c_{\rm A}\E\left[\exp\left(-\eta^{(n)}\left(c_{\rm A} R^{t,x,z}(T)+c_{\rm E}Y^{t,x,z}(T)\right)\right)\nabla_x R^{t,x,z}(T)\right],\nonumber\\
I_2(t,x,z)&:=-\eta^{(n)}c_{\rm E}\E\left[\exp\left(-\eta^{(n)}\left(c_{\rm A} R^{t,x,z}(T)+c_{\rm E}Y^{t,x,z}(T)\right)\right)\nabla_x Y^{t,x,z}(T)\right]. 
\end{align*}

\quad We next show that $x\mapsto\pa_x\Psi(t,x,z)$ is locally Lipschitz on $\R_+$. Fix $x_1,x_2\in\R_+$  with $x_1<x_2$. Then, inserting $\nabla_xR^{t,x,z}(s)=K^{t}(\tau_{x,z}^{t})\boldsymbol{1}_{\{\tau_{x,z}^t\leq s\}}$ into \eqref{eq:paxPsi}, we have
\begin{align}\label{eq:I_1decomp}
&|I_1(t,x_1,z)-I_1(t,x_2,z)|\nonumber\\
&\leq\eta^{(n)}c_{\rm A}\bigg\{\E\Big[\exp\left(-\eta^{(n)}\left(c_{\rm A} R^{t,x_1,z}(T)+c_{\rm E}Y^{t,x_1,z}(T)\right)\right) \left|K^{t}(\tau_{x_1,z}^t)\boldsymbol{1}_{\{\tau_{x_1,z}^t\leq T\}}-K^{t}(\tau_{x_2,z}^t)\boldsymbol{1}_{\{\tau_{x_2,z}^t\leq T\}}\right|\Big]\nonumber\\
&+\E\Big[\Big|\exp\left(-\eta^{(n)}\left(c_{\rm A} R^{t,x_2,z}(T)+c_{\rm E}Y^{t,x_2,z}(T)\right)\right)\nonumber\\
&\qquad-\exp\left(-\eta^{(n)}\left(c_{\rm A} R^{t,x_1,z}(T)+c_{\rm E}Y^{t,x_1,z}(T)\right)\right)\Big| \times K^t(\tau_{x_2,z}^t)\boldsymbol{1}_{\{\tau_{x_2,z}^t\leq T\}}\Big]\bigg\}.
\end{align}
Since $x\mapsto R^{t,x,z}(T)$ and $x\mapsto\tilde Y^{t,x,z}(T)$ are differentiable, there exists a constant $C_F>0$ only dependent of $(T,z_0,m,M)$ such that the 2nd term in \eqref{eq:I_1decomp} is dominated by $C_F|x_1-x_2|$ for any $x_1,x_2\in F$ with $F\subset \R_+=(0,\infty)$  being a compact subset. For the 1st term in \eqref{eq:I_1decomp}, it holds that
\begin{align}\label{eq:I11_decomp}
&\E\Bigl[\exp\left(-\eta^{(n)}\left(c_{\rm A} R^{t,x_1,z}(T)+c_{\rm E}Y^{t,x_1,z}(T)\right)\right) \left|K^t(\tau_{x_1,z}^t)\boldsymbol{1}_{\{\tau_{x_1,z}^t\leq T\}}-K^t(\tau_{x_2,z}^t)\boldsymbol{1}_{\{\tau_{x_2,z}^t\leq T\}}\right|\Bigr]\nonumber\\
&\leq\E\left[\left|K^t(\tau_{x_1,z}^t)\boldsymbol{1}_{\{\tau_{x_1,z}^t\leq T\}}-K^t(\tau_{x_2,z}^t)\boldsymbol{1}_{\{\tau_{x_2,z}^t\leq T\}}\right|\right]\nonumber\\
&=\E\left[\left|\frac{Z^{t,z}(\tau_{x_1,z}^t)\boldsymbol{1}_{\{\tau_{x_1,z}^t\leq T\}}}{x_1+\mu\nu^{(n)}\left(\frac{a}{2 c}\right)\int_t^{\tau_{x_1,z}^t}(K^{t}(r))^{-1}dr}-\frac{Z^{t,z}(\tau_{x_2,z}^t)\boldsymbol{1}_{\{\tau_{x_2,z}^t\leq T\}}}{x_2+\mu\nu^{(n)}\left(\frac{a}{2c}\right)\int_t^{\tau_{x_2,z}^t}(K^{t}(r))^{-1}dr}\right|\right]\nonumber\\
&\leq\E\Bigg[\left(x_2+\mu\nu^{(n)}\left(\frac{a}{2c}\right)\int_t^{\tau_{x_2,z}^t}(K^{t}(r))^{-1}dr\right)\left|Z^{t,z}(\tau_{x_1,z}^t)\boldsymbol{1}_{\{\tau_{x_1,z}^t\leq T\}}-Z^{t,z}(\tau_{x_2,z}^t)\boldsymbol{1}_{\{\tau_{x_2,z}^t\leq T\}}\right|\nonumber\\
&\qquad\times\left(x_1+\mu\nu^{(n)}\left(\frac{a}{2c}\right)\int_t^{\tau_{x_1,z}^t}(K^{t}(r))^{-1}dr\right)^{-1}\left(x_2+\mu\nu^{(n)}\left(\frac{a}{2c}\right)\int_t^{\tau_{x_2,z}^t}(K^{t}(r))^{-1}dr\right)^{-1}\Bigg]\nonumber\\
&\quad+\E\Bigg[\left|x_1-x_2+\mu\nu^{(n)}\left(\frac{a}{2 c}\right)\int_{\tau_{x_2,z}^t}^{\tau_{x_1,z}^t}(K^{t}(r))^{-1}dr\right|  Z^{t,z}(\tau_{x_2,z}^t)\boldsymbol{1}_{\{\tau_{x_2,z}^t\leq T\}}\nonumber\\
&\qquad\times \left(x_1+\mu\nu^{(n)}\left(\frac{a}{2 c}\right)\int_t^{\tau_{x_1,z}^t}(K^t(r))^{-1}dr\right)^{-1} \left(x_2+\mu\nu^{(n)}\left(\frac{a}{2c}\right)\int_t^{\tau_{x_2,z}^t}(K^t(r))^{-1}dr\right)^{-1}\Bigg]\nonumber\\
&\quad=:J_1+J_2.
\end{align}
Here, for the 1st equality in \eqref{eq:I11_decomp}, we used the fact that, for $(t,x,z)\in [0,T]\times \mathcal{A}$, $x+\mu\nu^{(n)}\left(\frac{a}{2c}\right)\int_t^{\tau_{x,z}^t}(K^t(r))^{-1}dr=(K^t(\tau_{x,z}^t))^{-1}Z^{t,z}(\tau_{x,z}^t)$ on $\{\tau_{x,z}^t\leq T\}$, a.s..

\quad For the term $J_1$, since $K^t(r)>0$ for all $r\in[0,T]$ and the fact that  $\sup_{r\in[t,T]}|Z^{t,z}(r)|\leq C$ for some constant $C=C(T,z,L)$ which only depends on $(T,z,L)$, we deduce that
\begin{align*}
&J_1\leq x_1^{-1}\E\left[\left|Z^{t,z}(\tau_{x_1,z}^t)\boldsymbol{1}_{\{\tau_{x_1,z}^t\leq T\}}-Z^{t,z}(\tau_{x_2,z}^t)\boldsymbol{1}_{\{\tau_{x_2,z}^t\leq T\}}\right|\right]\\
&\leq x_1^{-1}\Bigl\{2C\Pb\left(\tau_{x_2,z}^t\leq T<\tau_{x_1,z}^t\right) +\E\left[\left|Z^{t,z}(\tau_{x_1,z}^t)-Z^{t,z}(\tau_{x_2,z}^t)\right|\boldsymbol{1}_{\{\tau_{x_1,z}^t\leq T\}}\right]\Bigr\}\\
&\leq x_1^{-1}\Biggl\{2C\Pb\left(-x_2\leq \sup_{s\in [t,T]}U^{t,z}(s)\leq -x_1\right)+\E\left[\int_{\tau_{x_2,z}^t}^{\tau_{x_1,z}^t}\Phi(Z^{t,z}(r))d r\boldsymbol{1}_{\{\tau_{x_1,z}^t\leq T\}}\right]\Biggr\}\\
&\leq Cx_1^{-1}\Biggl\{2\int_{-x_2}^{-x_1}p(T,u;t,z)d u+C\E\left[\int_0^T\boldsymbol{1}_{\{\tau_{x_2,z}^t\leq r\leq\tau_{x_1,z}^t\leq T\}}dr\right]\Biggr\}\\
&\leq Cx_1^{-1}\Bigg\{2\int_{-x_2}^{-x_1}p(T,u;t,z)d u+C\int_0^T\int_{-x_2}^{-x_1}p(r,u;t,z)dudr\Bigg\}.
\end{align*}
Hence, $J_1$ is can be bounded by $C_F|x_1-x_2|$ if $x_1,x_2\in F$ with $F\subset \R_+$ being an arbitrary compact set due to Lemma~\ref{lemma:density}. $C_F$ only depends on $(T,z,L)$ and the choice of the compact set $F$. On the other hand, it holds from Fubini's theorem that
\begin{align*}
&J_2\leq Cx_1^{-2}\Biggl[|x_1-x_2| +\mu\nu^{(n)}\left(\frac{a}{2 c}\right)\E\left[\int_{\tau_{x_2}^t}^{\tau_{x_1,z}^t}(K^t(r))^{-1}dr \right]\Biggr]\\
&= Cx_1^{-2}\Biggl[|x_1-x_2|+\mu\nu^{(n)}\left(\frac{a}{2 c}\right)\int_0^T\E\left[(K^t(r))^{-1}\boldsymbol{1}_{\{\tau_{x_2,z}^t\leq r\leq\tau_{x_1,z}^t\leq T\}}\right]dr\Biggr]\\
&\leq Cx_1^{-2}\Bigg[|x_1-x_2|+\mu\nu^{(n)}\left(\frac{a}{2 c}\right)\int_0^T\int_{-x_2}^{-x_1}p^{\Qb_1}(r,u;t,z)d udr\Bigg].
\end{align*}
Repeating the argument for $J_1$, we can establish the locally Lipschitz property for $x\mapsto I_1(t,x,z)$. For $I_2(t,x,z)$, it readily follows from  representation $\nabla_x Y^{t,x,z}(s)=K^{t}(s)\boldsymbol{1}_{\{\tau^t_{x,z}> s\}}$ that
\begin{align}\label{eq:I_2decomp}
& |I_2(t,x_1,z)-I_2(t,x_2,z)|\leq\eta^{(n)}c_{\rm E}\nonumber\\
&\times\Biggl\{\E\Biggl[\exp\left(-\eta^{(n)}\left(c_{\rm A} R^{t,x_1,z}(T)+c_{\rm E}Y^{t,x_1,z}(T)\right)\right)K^t(T)\times\left|\boldsymbol{1}_{\{\tau_{x_1,z}^t>T\}}-\boldsymbol{1}_{\{\tau_{x_2,z}^t>T\}}\right|\Biggr]\nonumber\\
&\quad+\E\Big[\Big|\exp\left(-\eta^{(n)}\left(c_{\rm A} R^{t,x_2,z}(T)+c_{\rm E}Y^{t,x_2,z}(T)\right)\right)\nonumber\\
&\qquad-\exp\left(-\eta^{(n)}\left(c_{\rm A} R^{t,x_1,z}(T)+c_{\rm E}Y^{t,x_1,z}(T)\right)\right)\Big|\times K^t(T)\boldsymbol{1}_{\{\tau_{x_2,z}^t>T\}}\Big]\Biggr\}.
\end{align}
Similarly, using the argument as for $x\mapsto I_1(t,x,z)$, we have that the 2nd term in \eqref{eq:I_2decomp} is locally Lipschitz in $x\in\R_+$. On the other hand, by construction of the probability measure $\Qb_2$, it holds that
\begin{align}\label{eq:pQTintegral}
&\E\left[\exp\left(-\eta^{(n)}\left(c_{\rm A} R^{t,x_1,z}(T)+c_{\rm E}Y^{t,x_1,z}(T)\right)\right)K^t(T)\times\left(\boldsymbol{1}_{\{\tau_{x_1,z}^t>T\}}-\boldsymbol{1}_{\{\tau_{x_2,z}^t>T\}}\right)\right]\nonumber\\
&\leq \Qb_2\left(\tau_{x_2,z}^t\leq T<\tau_{x_1,z}^t\right)\leq\Qb_2\left(-x_2\leq \sup_{s\in [t,T]}U^{t,z}(s)\leq -x_1\right)\nonumber\\
&=\int_{-x_2}^{-x_1}p^{\Qb_2}(T,u;t,z)du.
\end{align}
By using Lemma~\ref{lemma:density}, there exists a constant $C_F>0$ such that \eqref{eq:pQTintegral} can be bounded by $C_F|x_1-x_2|$ for every $x_1,x_2\in F$ with $F\subset\R_+$ being compact. This yields that $x\mapsto I_2(t,x,z)$ is locally Lipschitz, and hence $x\to\pa_x\Psi(t,x,z)$ is also locally Lipschitz. Hence, $x\mapsto\pa_{xx}\Psi(t,x,z)$ exists and belongs to $L^{\infty}_{\rm loc}(\R_+)$.

\quad We next prove that $z\mapsto\Psi(t,x,z)$ is also Lipschitz. To do it, we first recall that, for $s\in[t,T]$,
\begin{align*}
\bar R^{t,x,z}(s)=&\sup_{r\in [t,s]}\left[x+\mu\nu^{(n)}\left(\frac{a}{2 c}\right)\int_t^r(K^t(l))^{-1}dl-(K^t(r))^{-1}Z^{t,z}(r)\right]\vee 0.
\end{align*}
By utilizing the fact that $\pa_z Z^{t,z}(s)=\exp\left(\int_t^s\Phi'(Z^{t,z}(r))dr\right)\leq e^{ L T}$ for all $s\in[t,T]$, where  we recall that $L>0$ is the Lipschitz constant of $z\mapsto\Phi(z)$, we deduce that,  for any $s\in [t,T]$ and $z_1,z_2>0$,
\begin{align*}
&\left|\bar Y^{t,x,z_1}(s)-\bar Y^{t,x,z_2}(s)\right|=\left|\bar R^{t,x,z_1}(s)-\bar R^{t,x,z_2}(s)\right|\leq e^{ LT}|z_1-z_2|\sup_{r\in [t,s]}(K^t(r))^{-1}.
\end{align*} 
This implies that $|Y^{t,x,z_1}(s)-Y^{t,x,z_2}(s)|\leq e^{LT}|z_1-z_2|K^t(s)\sup_{r\in [t,s]}(K^t(r))^{-1}$ for all $s\in[t,T]$. From Burkholder-Davis-Gundy (BDG) inequality and Gronwall's inequality, it follows that
\begin{align*}
&\E\left[\left|Y^{t,x,z_1}(T)-Y^{t,x,z_2}(T)\right|^2+\left|R^{t,x,z_1}(T)-R^{t,x,z_2}(T)\right|^2\right]\leq C|z_1-z_2|^2,
\end{align*}
for some constant $C=C(T,L)>0$ depending on $(T,L)$. Consequently, we arrive at
\begin{align}\label{eq:PsiLipz}
&\left|\Psi(t,x,z_1)-\Psi(t,x,z_2)\right|\nonumber\\
&\leq \E\left[e^{-\eta^{(n)}c_{\rm A}R^{t,x,z_1}(T)}\left|e^{-\eta^{(n)}c_{\rm E}Y^{t,x,z_1}(T)}-e^{-\eta^{(n)}c_{\rm E}Y^{t,x,z_2}(T)}\right|\right]\nonumber\\
&\quad+\E\left[e^{-\eta^{(n)}c_{\rm E}Y^{t,x,z_2}(T)}\left|e^{-\eta^{(n)}c_{\rm A}R^{t,x,z_1}(T)}-e^{-\eta^{(n)}c_{\rm A}R^{t,x,z_2}(T)}\right|\right]\nonumber\\
&\leq \eta^{(n)}\left\{c_{\rm A}\E\left[|R^{t,x,z_1}(T)-R^{t,x,z_2}(T)|\right]+c_{\rm E}\E\left[|Y^{t,x,z_1}(T)-Y^{t,x,z_2}(T)|\right]\right\}\nonumber\\
&\leq \sqrt{C}\eta^{(n)}\max\{c_{\rm A},c_{\rm E}\}|z_1-z_2|.
\end{align}

Lastly, we show that $t\mapsto\Psi(t,x,z)$ is locally Lipschitz continuous on $[0,T)$. To this purpose, we first verify that $t\mapsto p(T,u;t,z)$ is locally Lipschitz continuous on $[0,T)$. Define a new probability measure $\Qb$ by $\frac{d\Qb}{d\Pb}|_{\F_T}=\mathcal{E}(E^t)$, 
where the exponential martingale $\mathcal{E}(E^t)$ is defined by $\mathcal{E}(E^t)(\cdot)=\exp\left(E^t(\cdot)-\frac12\langle E^t,E^t\rangle_{\cdot}\right)$ and the martingale process $E^t=(E^t(s))_{s\in [t,T]}$ is given by
\begin{align*}
&E^{t}(s):=-\int_0^s\frac{\mu\nu^{(n)}\left(\frac{a}{2c}\right)-\Phi(Z^{t,z}(r))-(\delta+\sigma^2)Z^{t,z}(r)}{\sigma Z^{t,z}(r)}dW(r).
\end{align*}
Then, we have $\tilde W(s):=W(s)+\langle E^t,W\rangle_s$ for $s\in[t,T]$ is a $(\Qb,\Fb)$-Brownian motion. Furthermore, let $V^{t,z}(s):=-z+\int_t^s\sigma Z^{t,z}(r)(K^t(r))^{-1}dW(r)$ for $s\in[t,T]$. As a result, we have $\Qb\circ (V^{t,z})^{-1}=\Pb\circ (U^{t,z})^{-1}$. Introduce the time-change $s\mapsto\Pi^t(s):=t+\int_t^s(\sigma Z^{t,z}(r)(K^t(r))^{-1})^2dr$ which is continuous and strictly increasing on $[t,T]$. Denote by $s\mapsto\kappa^{t}(s)$ the inverse function of $s\mapsto\Pi^{t}(s)$, and define a new filtration by $\G_s:=\F_{\kappa^{t}(s)}$ for $s\in[t,T]$. Hence, $V^{t,z}(\kappa^{t}(s))=-z+\int_t^{\kappa^{t}(s)}\sigma Z^{t,r}(r)(K^t(r))^{-1}dW(r)$ for $s\in[t,T]$ is a $\mathbb{G}=(\G_s)_{s\in[t,T]}$-Brownian motion under $\Pb$. Moreover, $s\mapsto\Pi^{t}(s)$ is strictly increasing and differentiable with respect to $s$ and so is $s\mapsto\kappa^{t}(s)$. For every $m\in\R_+$, $\Pb (\sup_{r\in [t,s]}V^{t,z}(r)\leq m )=\Pb (\sup_{r\in [t,\Pi^t(s)]}W(r)\leq m )$ is locally Lipschitz in $t\in [0,T)$. Consequently, $\Pb (\sup_{r\in [t,s]}U^{t,z}(r)\leq m )=\Qb(\sup_{r\in [t,s]}V^{t,z}(r)\leq m)$ is locally Lipschitz  by using local Lipschitz continuity of $E^t$. As a result, $p(s,u;t,z)$ is locally Lipschitz in $t\in [0,T)$. 
    
\quad We now fix $h>0$ and $t\in [0,T)$ such that $t+h< T$. Since we have established that $\Psi(t,\cdot,\cdot)\in \mathcal{W}^{2,1;\infty}(\mathcal{A})$ for each $t\in [0,T)$, we can apply It\^{o}'s formula to $\Psi(t+h,Y^{t,x,z}(t+h),Z^{t,z}(t+h))-\Psi(t+h,x,z)$ (c.f., Theorem 1 in Chapter 2 Section 10 of \citealt{Krylov}) to derive that
\begin{align}\label{ItoPsi}
&\Psi(t+h,Y^{t,x,z}(t+h),Z^{t,z}(t+h))-\Psi(t+h,x,z)=\int_t^{t+h}\pa_x\Psi(t+h,Y^{t,x,z}(s),Z^{t,z}(s))dY^{t,x,z}(s)\nonumber\\
&+\frac12\int_t^{t+h}\pa_{xx}\Psi(t+h,Y^{t,x,z}(s),Z^{t,z}(s))d\langle Y^{t,x,z},Y^{t,x,z}\rangle_s+\int_t^{t+h}\pa_z\Psi(t+h,Y^{t,x,z}(s),Z^{t,z}(s))dZ^{t,z}(s).
\end{align}
Inserting the Markov property $\Psi(t,x,z)=\E [ \Psi(t+h,$ $Y^{t,x,z}(t+h),Z^{t,z}(t+h)) ]$ into \eqref{ItoPsi}, we deduce that
\begin{align}\label{eq:Psitdif}
&\Psi(t,x,z)-\Psi(t+h,x,z)\nonumber\\
&=\E\Bigg[\int_t^{t+h}\pa_x\Psi(t+h,Y^{t,x,z}(s),Z^{t,z}(s))\times\left(\mu\nu^{(n})\left(\frac{1}{2 c}\right)-\delta Y^{t,x,z}(s)\right)ds\Bigg]\nonumber\\
&\quad+\frac{1}{2}\E\left[\int_t^{t+h}\pa_{xx}\Psi(t+h,Y^{t,x,z}(s),Z^{t,z}(s))(\sigma Y^{t,x,z}(s))^2ds\right]\nonumber\\
&\quad+\E\left[\int_t^{t+h}\pa_z\Psi(t+h,Y^{t,x,z}(s),Z^{t,z}(s))\Phi(Z^{t,z}(s))ds\right]\nonumber\\
&\quad-\E\left[\int_t^{t+h}\pa_x\Psi(t+h,Y^{t,x,z}(s),Z^{t,z}(s))d R^{t,x,z}(s)\right]\nonumber\\
&=L_1+L_2+L_3-L_4.
\end{align}
It is straightforward to verify that $L_1+L_2+L_3$ can be bounded by $Ch$ for some constant $C>0$ depending only on $(T,z,L,m,M)$. For the term $L_4$, note that
\begin{align*}
\frac{L_4}{h}&\leq\frac{1}{h}\E\Bigg[\Bigg|\int_t^{t+h} \bigg(\pa_x\Psi(t+h,Y^{t,x,z}(s),Z^{t,z}(s))-\pa_x\Psi(t+h,x,z)\bigg) d R^{t,x,z}(s)\Bigg|\Bigg]\\
&\quad+\frac{\pa_x\Psi(t+h,x,z)}{h}\E\left[R^{t,x,z}(t+h)-R^{t,x,z}(t)\right].
\end{align*}
The 1st term of R.H.S. of the above display tends to $0$ as $h\to 0$ due to Lebesgue differentiation theorem, and the 2nd term is locally bounded by the local Lipschitz continuity of $p(s,u;t,z)$ w.r.t. $t\in [0,T)$ and \eqref{eq:barR}. Combining all the above arguments, we verified $\Psi\in\mathcal{W}^{1,2,1;\infty}([0,T]\times \mathcal{A})$. Lastly, by applying the generalized It\^{o} formula for functions in Soblev space (c.f., Theorem~1 in Chapter 2 Section 10 of \citealt{Krylov}) to $\Psi(t,Y^{t,x,z}(s),Z^{t,x,z}(s))$  $s\in[t,T]$, we conclude that $\Psi(t,x,z)$ is indeed the weak solution to \eqref{eq:Psi} in the sense of Definition \ref{weak_solution}, where the boundary condition and the terminal condition can be directly verified. Thus, the proof of the proposition is complete. 
\end{proof}

\quad Then, we have the following verification result on the leader's optimal control problem:
\begin{corollary}\label{prop:verification}
Let $u^{*,(n)}(t,x,z)$ be given in Proposition \ref{prop:probPsi}. Define $u^{*,(n)}(t):=u^{*,(n)}(t,X^{*,(n)},Z(t))$ for $t\in[0,T]$, where $(X^{*,(n)},A^{*,(n)})=(X^{*,(n)}(t),A^{*,(n)}(t))_{t\in[0,T]}$ satisfies \eqref{eq:n-opt-X-F} with production strategy $\boldsymbol{q}^{*,(n)}(u)$ given by \eqref{eq:n-opt-q} in which $u$ is replaced by $u^{*,(n)}$. 
Then, $u^{*,(n)}=(u^{*,(n)}(t))_{t\in[0,T]}$ is an leader's optimal strategy and the corresponding value function is given by $V(t,x,z)=-\frac{1}{\eta^{(n)}}\log\Psi(t,x,z)$ with $\Psi$ being the probabilistic representation \eqref{eq:Psi-R}.
\end{corollary}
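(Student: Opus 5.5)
The argument is a standard verification argument built on the Cole--Hopf inversion. Set $\widehat V:=-\frac{1}{\eta^{(n)}}\log\Psi$. By Proposition~\ref{prop:probPsi}, $\Psi\in\mathcal{W}^{1,2,1;\infty}_{\rm loc}([0,T]\times\mathcal{A})$. Moreover $\Psi>0$, since it is the expectation of a strictly positive variable bounded by one, and $\Psi$ is locally bounded away from zero because $R^{t,x,z}(T)$ and $Y^{t,x,z}(T)\le Z^{t,z}(T)$ have locally uniform moments. Hence $\widehat V$ lies in the same local Sobolev space. The chain rule reverses the computation leading to \eqref{eq:Psi}: $\widehat V$ solves, a.e. on $[0,T)\times\mathcal{A}$, the nonlinear HJB equation obtained after inserting \eqref{eq:star-u-R}. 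It also satisfies the Neumann condition $\pa_x\widehat V(t,z,z)=c_{\rm A}$, which is the Robin condition divided by $-\eta^{(n)}\Psi$, and the terminal condition $\widehat V(T,x,z)=c_{\rm E}x$. Since $\min_u[\pa_x\widehat V\,\mu\nu^{(n)}(\frac{1}{2c})ux+c_{\rm P}u^2]$ is attained exactly at $u^{*,(n)}(t,x,z)$ given in \eqref{eq:star-u-1}, for every $u\in\mathbb{R}$ we get the inequality ``$\mathcal{L}^u\widehat V+c_{\rm P}u^2\ge 0$'', with equality at $u=u^{*,(n)}$.

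Next fix $u\in\mathbb{U}_0$ and let $(X^{*,u,(n)},A^{*,u,(n)})$ be as in \eqref{eq:n-opt-X-F}, started from $(x,z)$ at time $t$. I will apply the generalized It\^o formula for Sobolev functions (Krylov, Ch.~2 \S10, as already used in the proof of Proposition~\ref{prop:probPsi}) to $\widehat V(s,X^{*,u,(n)}(s),Z(s))$. The main obstacle sits here. Krylov's formula requires a nondegenerate diffusion and local boundedness of the derivatives. The diffusion coefficient $\sigma X$ is nondegenerate because Lemma~\ref{lem:X-star-positive} gives $X>0$. To stay inside a compact subset of $\mathcal{A}$, I localize with the stopping times $\theta_k:=\inf\{s\ge t:\ X^{*,u,(n)}(s)\le 1/k\}\wedge T$, using that $X\le Z\le\|Z\|_\infty$. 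The reflection term contributes $-\int\pa_x\widehat V\,dA^{*,u,(n)}=-c_{\rm A}\int dA^{*,u,(n)}$, because $A$ only charges $\{X=Z\}$, where $\pa_x\widehat V=c_{\rm A}$. To justify using the boundary trace of a $\mathcal{W}^{1,2,1;\infty}_{\rm loc}$ function, I will use the continuity of $\pa_x\Psi$ up to the boundary, which follows from its local Lipschitz property established in the proof of Proposition~\ref{prop:probPsi}.

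Taking expectations gives
$$\widehat V(t,x,z)\le \E\Big[\int_t^{\theta_k}c_{\rm P}u(s)^2ds+c_{\rm A}A^{*,u,(n)}(\theta_k)+\widehat V(\theta_k,X(\theta_k),Z(\theta_k))\Big].$$
The stochastic integral is a true martingale, since $\pa_x\widehat V$ is bounded on the localized region and $X\le\|Z\|_\infty$. As $k\to\infty$, $\theta_k\to T$ a.s. by Lemma~\ref{lem:X-star-positive}. Monotone convergence handles the first two terms. For the last term, $\widehat V$ is continuous, and dominated convergence applies because $\widehat V$ is bounded on $[0,T]\times\{(x,z):0<x\le z\le \|Z\|_\infty\}$; this bound follows from $\Psi\ge e^{-\eta^{(n)}(c_{\rm A}\E[\cdots])}$ via Jensen, together with $A$ having finite second moment by the Skorokhod representation of Lemma~\ref{lem:Skorokhod} and BDG. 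The terminal condition then gives $\widehat V\le J$ for every admissible $u$, hence $\widehat V\le V$.

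For the reverse inequality I plug in $u^{*,(n)}$. The first task is to check that the closed-loop SDE \eqref{eq:n-opt-X-F} with feedback $u^{*,(n)}(s,X(s),Z(s))$ is well-posed. The drift $x\,u^{*,(n)}(t,x,z)$ is locally Lipschitz in $x$, since $\pa_x\Psi$ is locally Lipschitz and $\Psi$ is bounded below, and the state is confined to $(0,Z]$. Combining these with the Skorokhod map of Lemma~\ref{lem:Skorokhod}, a localized Picard iteration yields a unique strong solution. The second task is admissibility, $\|u^{*,(n)}\|_2\le C_1$. For this I bound $|u^{*,(n)}|\le C\,x\,|\pa_x\widehat V|$ using the explicit gradient formulas $\nabla_xY=K\mathbf 1$ and $\nabla_xR=K(\tau)\mathbf 1$. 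This gives $|\pa_x\Psi|/\Psi\le C\,\E[\sup K]/\Psi$, and hence a bound of order $Z(T)^2$, consistent with the constant $\alpha$ built into the definition of $C_1$. With equality in the HJB along the optimal path, the same localization yields $\widehat V=J_0(u^{*,(n)})\ge V$. Therefore $V=\widehat V=-\frac{1}{\eta^{(n)}}\log\Psi$ and $u^{*,(n)}$ is optimal.
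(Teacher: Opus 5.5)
Your proposal is correct at the paper's level of rigor and follows the same route as the paper. That route is: Cole--Hopf inversion $V=-\frac{1}{\eta^{(n)}}\log\Psi$; a Jensen lower bound on $\Psi$ combined with the explicit gradients $\nabla_x Y^{t,x,z}$ and $\nabla_x R^{t,x,z}$, giving a locally Lipschitz, linearly bounded feedback (hence a well-posed closed loop and the $L^2$ bound behind $\alpha$); and Krylov's generalized It\^o formula for the verification inequality, where you usefully make explicit the localization, the Neumann condition turning $-\int\pa_x V\,dA$ into $-c_{\rm A}A$, and the limit the paper leaves implicit. One caveat, shared with the paper: positivity of $X$ gives nondegeneracy only in the $x$-direction, while $Z$ is deterministic, so Krylov's formula (which needs a nondegenerate diffusion in all state variables) is really being used along $s\mapsto Z(s)$, and a fully rigorous version would work with $(s,x)\mapsto V(s,x,Z(s))$ directly.
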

\vspace{-0.4cm}
\begin{proof}
We have, for some constant$C_0=C_0(T,m,M)>0$,
\begin{align*}
&\E\left[R^{t,x,z}(T)\right]=\E\left[\int_t^TK^t(s)d\bar{R}^{t,x,z}(s)\right]\leq \E\left[\left(\sup_{s\in [t,T]}K^t(s)\right)\bar{R}^{t,x,z}(T)\right]\\
&\leq \left(\E\left[\sup_{s\in [t,T]}|K^t(s)|^2\right]\right)^{\frac12}\left(\E\left[|\bar R^{t,x,z}(T)|^2\right]\right)^{\frac12}\leq C_0\left(\E\left[\left| x+\sup_{s\in [t,T]}U^{t,z}(s)\right|^2\right]\right)^{\frac12}.
\end{align*}
It follows from Jensen's inequality that
\begin{align*}
    &\E\left[\exp\left(-\eta^{(n)}\left(c_{\rm A} R^{t,x,z}(T)+c_{\rm E}Y^{t,x,z}(T)\right)\right)\right]\\
    \geq &\exp\left(-\eta^{(n)}\E\left[c_{\rm A} R^{t,x,z}(T)+c_{\rm E}Y^{t,x,z}(T)\right]\right)\\
    \geq&\exp\Big(-\eta^{(n)}\Big(c_{\rm A}C_0\Big(\E\big[\big|(z+\sup_{s\in [t,T]}U^{t,z}(s)\big|^2\big]\Big)^{\frac12}+c_{\rm E}Z^{t,z}(T)\Big)\Big)=:C_3.
\end{align*}
Here, we have used the fact $Y^{t,x,z}(T)\leq Z^{t,z}(T)$ and $x<z$. Hence, it is straightforward to verify that $C_3$ is a constant only depending on $(m,M,L,T,z_0)$. 
Moreover, note that $\sup_{s\in [t,T]}|\nabla Y^{t,x,z}(s)|\vee|\nabla_x R^{t,x,z}(s)|$ $\leq \sup_{s\in [t,T]}K^{t}(s)$, a.s. by using Proposition \ref{prop:probPsi}. Then, from the representation of $\nabla_x Y^{t,x,z}$ and $\nabla_x\tilde Y^{t,x,z}$ given in Proposition \ref{prop:probPsi}, there is a constant $\kappa>0$ depending on $(T,m,M)$ such that $\E[c_{\rm A}\nabla_x R^{t,x,z}(T)$ $+c_{\rm E}\nabla_xY^{t,x,z}(T)]\leq \kappa$. Recall the optimal (feedback) control function $u^{*,(n)}(t,x,z)$ given by \eqref{eq:star-u-1}. Then, it follows from H\"{o}lder's inequality that $|u^{*,(n)}(t,x,z)|\leq \frac{C_3\kappa\mu\eta^{(n)}}{2c_{\rm P}}\nu^{(n)} (\frac{1}{2c} ) x$ for all $(t,x,z)\in[0,T]\times \mathcal{A}$, where we exploited the fact $Y^{t,x,z}(T)\geq 0$, $R^{t,x}(T)\geq 0$ and $Y^{t,x,z}(T)\leq Z^{t,z}(T)$. Thanks to Proposition~\ref{prop:probPsi}, we have $x\mapsto u^{*,(n)}(t,x,z)$ is locally Lipschitz, and hence the SDE \eqref{eq:n-opt-X-F} controlled by $u^{*,(n)}=(u^{*,(n)}(t,X^{*,u,(n)},Z(t)))_{t\in[0,T]}$ has a unique solution. Moreover,  there exists a constant $\alpha:=\alpha( m,M,L,T,z_0)$ such that $\frac{C_3\kappa\mu\eta^{(n)}}{2c_{\rm P}}\nu^{(n)}\left(\frac{1}{2c}\right)\leq \alpha$. Combined with the fact
 $\sup_{n\geq1} \E[\sup_{t\in[0,T]}|X^{*,u,(n)}(t)|^2]\leq Z(T)^2$, 
 we have
\begin{align}\label{uniformL2_u}
&\int_0^T \E\left[\left|u^{*,(n)}(t)\right|^2\right]dt=\int_0^T\E\left[\left|u^{*,(n)}(t,X^{*,u,(n)}(t),Z(t))\right|^2\right]dt\leq \alpha Z(T)^2.
\end{align}
Consequently, $u^{*,(n)}=(u^{*,(n)}(t,X^{*,u,(n)},Z(t)))_{t\in[0,T]}\in\mathbb{U}_0$, i.e., it is an admissible control. On the other hand, for any admissible control $u=(u(t))_{t\in [0,T]}\in\mathbb{U}_0$, by applying the generalized It\^{o}'s formula for functions in Soblev space (c.f., Theorem~1 in Chapter 2 Section 10 of \citealt{Krylov}) to $V(t,X^{u}(t),Z(t))$ with $V(t,x,z)-\frac{1}{\eta^{(n)}}\log\Psi(t,x,z)$ and the state process $X^u=(X^u(t))_{t\in [0,T]}$ satisfying \eqref{eq:star-X-u-1}, we can finally derive that $J_0^{(n)}(u^{*,(n)};\boldsymbol{q}^{*,(n)}(u^{*,(n)}))\leq  J_0^{(n)}(u;\boldsymbol{q}^{*,(n)}(u))$ for any $u\in\mathbb{U}_0$. Thus, the proof of the corollary is complete.
\end{proof}

\subsection{Approximate Stackelberg Equilibrium}\label{sec:ASE}

This subsection shows that if the leader announces $u^{*,(n)}$ obtained in Section~\ref{sec:leader} to the $n$ followers, the set of optimal strategies for the leader and the followers constitutes an $(\epsilon,0)$-Stackelberg equilibrium.

\begin{theorem}
The leader's strategy $u^{*,(n)}\in\mathbb{U}_0$ given by \eqref{eq:star-u-1} and the strategy set of $n$ followers $(q_1^{*,(n)}(u^{*,(n)}),\dots,$ $q_n^{*,(n)}(u^{*,(n)}) )\in\mathbb{U}^n$ given by~\eqref{eq:n-opt-q} establish an $(\epsilon,0)$-Stackelberg equilibrium, where $\epsilon=\epsilon(n)>0$ satisfying $\lim_{n\to\infty}\epsilon(n)=0$ is provided in Theorem~\ref{thm:eps-Nash}.
\end{theorem}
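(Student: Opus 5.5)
The plan is to check the two conditions of Definition~\ref{def:SE} one at a time, with $\epsilon_1=\epsilon(n)$ taken from Theorem~\ref{thm:eps-Nash} and $\epsilon_2=0$. Nothing new has to be estimated. The work is to confirm that the hypotheses of the earlier results hold uniformly: over all leader strategies for condition~(i), and for the specific feedback strategy $u^{*,(n)}$ for condition~(ii).

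For condition~(i), fix an arbitrary $u\in\mathbb{U}_0$. Theorem~\ref{thm:eps-Nash} says that the vector $\boldsymbol{q}^{*,(n)}(u)$ defined in \eqref{eq:n-opt-q} is an $\epsilon(n)$-Nash equilibrium, and its proof already shows that $\boldsymbol{q}^{*,(n)}(u)\in\mathbb{U}^{(n)}$ via the bound $\|q_i^{*,(n)}(u)\|_2\le C_2$. This bound uses only $\|u\|_2\le C_1$ and $0<X^{*,u,(n)}\le Z$ (Lemma~\ref{lem:X-star-positive}).

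The real issue is that Definition~\ref{def:SE}(i) needs one $\epsilon_1$ valid for every $u\in\mathbb{U}_0$. So I would revisit the convergences \eqref{eq:momentestite0} and \eqref{eq:sup-i-bar}. The differences $X^{*,u,(n)}-\bar X^u$ and $X^{*,u,(n)}_{-i}-\bar X^u$ are driven by three quantities: $|\nu^{(n)}(1/c)-\nu(1/c)|\,|u|\,X$, $|\nu^{(n)}(a/c)-\nu(a/c)|$, and the $\frac{1}{n}q_i$ perturbation. I would bound these using the Lipschitz property \eqref{Skorohod-Lip} of the Skorokhod map $\Gamma$, the bound $X\le\|Z\|_\infty$, and $\|u\|_2\le C_1$, $\|q_i\|_2\le C_2$. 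Together with Gronwall this should give an estimate of the form $C(T,m,M,C_1,C_2,\|Z\|_\infty)\big(|\nu^{(n)}-\nu|(1/c)+|\nu^{(n)}-\nu|(a/c)+n^{-1}\big)$, and the constant does not depend on $u$. Setting $\epsilon(n):=\sup_{u\in\mathbb{U}_0}\epsilon^u(n)$ then still tends to $0$ by $(\mathbf{A_1})$.

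I expect this uniformity to be the main obstacle. The Gronwall step contains the factor $\int|u|^2$ multiplied by the squared difference, and only a second-moment bound on $u$ is available, not a pathwise bound. My first attempt would be to multiply by the exponential weight $\exp(-\lambda\int_0^t|u(s)|^2ds)$ or localize with a stopping time. Failing that, I would use the fact that $0<X\le\|Z\|_\infty$ keeps every state term bounded, so that the terms involving $u$ can be handled by Cauchy--Schwarz against $\|u\|_2\le C_1$.

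For condition~(ii), Corollary~\ref{prop:verification} gives $u^{*,(n)}\in\mathbb{U}_0$, using the $L^2$ bound \eqref{uniformL2_u} and the choice $C_1\ge\alpha Z(T)^2$. The same corollary gives $J_0^{(n)}(u^{*,(n)};\boldsymbol{q}^{*,(n)}(u^{*,(n)}))\le J_0^{(n)}(u;\boldsymbol{q}^{*,(n)}(u))$ for every $u\in\mathbb{U}_0$. The followers' response entering the leader's problem is exactly $\boldsymbol{q}^{*,(n)}(u)$ from \eqref{eq:n-opt-q}, with state dynamics \eqref{eq:n-opt-X-F}, and this is the system used in the value function \eqref{eq:vtxz0}. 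Taking the infimum over $u$ therefore gives (ii) with $\epsilon_2=0$. Combining this with (i) yields the $(\epsilon,0)$-Stackelberg equilibrium.
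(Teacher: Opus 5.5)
Your outline matches the paper's proof. Condition (ii) of Definition~\ref{def:SE} comes from Corollary~\ref{prop:verification}: admissibility of $u^{*,(n)}$ via \eqref{uniformL2_u}, plus optimality against every $u\in\mathbb{U}_0$ for the system \eqref{eq:n-opt-X-F}. This gives $\epsilon_2=0$, and your treatment of (ii) is exactly the paper's and is fine. Condition (i) is taken from Theorem~\ref{thm:eps-Nash}. Here the paper only cites that theorem and asserts that its $\epsilon(n)$ works ``under any $u\in\mathbb{U}_0$''; uniformity is never checked. The constant $C(T,m,M)$ in the Gronwall step of Appendix~\ref{sec:proof} is not justified either, because the coefficient $\mu\nu(\frac{1}{2c})u(s)-\delta$ multiplying the state difference is unbounded. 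So you are right that Definition~\ref{def:SE}(i) needs an $\epsilon_1(n)$ independent of $u$, and raising this is more careful than the paper. However, your proposal does not actually establish that uniformity.

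\textbf{The Cauchy--Schwarz fallback.} This route cannot work. Set $D:=X^{*,u,(n)}-\bar X^u$ and $\epsilon_n:=|(\nu^{(n)}-\nu)(\frac{a}{2c})|+|(\nu^{(n)}-\nu)(\frac{1}{2c})|$. The bound $\E\int_0^t|u||D|^2ds\le\|Z\|_\infty\|u\|_2(\E\int_0^t|D|^2ds)^{1/2}$ only yields $f(t)\le C\epsilon_n+C(\int_0^tf(s)ds)^{1/2}$ for $f(t):=\E[\sup_{s\le t}|D(s)|^2]$. Since $f(t)=C^2t/2$ satisfies this even when $\epsilon_n=0$, no smallness follows.

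\textbf{The weighted route.} This can be completed at the level of states. Apply It\^o's formula to $\Lambda(t)|D(t)|^2$ with $\Lambda(t)=\exp(-\lambda\int_0^t|u(s)|^2ds)$. Drop the reflection terms, which are nonpositive because both processes reflect at the same barrier $Z$. Absorb $2\mu\nu(\frac{1}{2c})|u||D|^2$ into $\lambda|u|^2|D|^2$. This gives $\E[\Lambda(t)|D(t)|^2]\le C\epsilon_n$ uniformly over $\|u\|_2\le C_1$. Then remove the weight using $|D|\le\|Z\|_\infty$ and $\Pb(\int_0^T|u|^2dt>K)\le C_1^2/K$, letting $K=K_n\to\infty$ slowly.

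\textbf{The missing step.} The real gap comes after this: $\epsilon^u(n)$ is not a function of the state error alone, so $\sup_u\epsilon^u(n)\to0$ does not follow from uniform state estimates. The Nash-gap comparison contains terms such as $\E\int_0^T\frac{u^2}{4c_i}\big(|X^{*,u,(n)}|^2-|\bar X^u|^2\big)dt$ and $\sup_{q_i\in\mathbb{U}_i}\E\int_0^T u\,(X^{*,u,(n)}_{-i}-\bar X^u)\,q_i\,dt$. For a fixed $u$ these vanish, because the single function $u^2$ is uniformly integrable. Over the ball $\{\|u\|_2\le C_1\}$, however, the families $u^2$ and $uq_i$ are only bounded in $L^1$. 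So uniform $L^2$-smallness of the bounded state differences does not make these terms uniformly small.

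As it stands, your argument for (i) is no more complete than the paper's. Closing it requires either a finer argument that exploits the specific dynamics, or restricting $\mathbb{U}_0$ to uniformly bounded controls. In the bounded case plain Gronwall makes every estimate, including the payoff terms, uniform in $u$.
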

\vspace{-0.4em}
\begin{proof}
It follows from Theorem~\ref{thm:eps-Nash} that there exists a constant $\epsilon=\epsilon(n)$ satisfying $\lim_{n\to\infty}\epsilon(n)=0$ such that $(q_1^{*,(n)}(u),\dots,q_n^{*,(n)}(u) )$ given by \eqref{eq:n-opt-q} constitutes an $\epsilon$-Nash equilibrium under any $u=(u(t))_{t\in [0,T]}\in\mathbb{U}_0$, 
i.e., $ J_{i}^{(n)}(\boldsymbol{q}^{*,(n)}(u) )\geq\sup_{q_{i}\in\mathbb{U}_i} J_{i}^{(n)}(q_{i},$ $\boldsymbol{q}^{*,(n),-i}(u) )-\epsilon$ for all $i=1,\ldots,n$. On the other hand, for the leader's strategy $u^{*,(n)}\in\mathbb{U}_0$ given by \eqref{eq:star-u-1}, by the verification result documented in Corollary \ref{prop:verification}, under the vector of strategies $(q_1^{*,(n)}(u^{*,(n)}),\dots,q_n^{*,(n)}(u^{*,(n)}))$ given by \eqref{eq:n-opt-q}, we have $J_0^{(n)}(u^{*,(n)};\boldsymbol{q}^{*,(n)}(u^{*,(n)}) )\leq \inf_{u\in\mathbb{U}_0} J_0^{(n)}(u;\boldsymbol{q}^{*,(n)}(u))$. Thus, we established the conditions (i) and (ii) of Definition \ref{def:SE} with $(\epsilon_1,\epsilon_2)=(\epsilon(n),0)$. Hence, the proof of the theorem is complete.
\end{proof}

\section{Numerical Analysis}\label{sec:number}

This section performs numerical analysis on approximate Stackelberg equilibria  
with different regional quantities. We then quantify the sensitivity of equilibrium solutions with respect to perturbations of model parameters. Consider the cap function $\Phi:\R\to\R$ in \eqref{eq:E0-SDE} which takes the reverting linear form $ \Phi(z)=A_Z-B_Z z$ with $A_Z,B_Z>0$. Then, the emission cap process $Z=(Z(t))_{t\in[0,T]}$ has the closed-form representation given by, for $z_0>0$,
\begin{align*}
Z(t)=\frac{A_Z}{B_Z}+\left(z_0-\frac{A_Z}{B_Z}\right)e^{-B_Zt},\quad \forall t\in[0,T].    
\end{align*}
Here, the parameter of cap function $A_Z$ is the steady-state cap level coefficient of the emission cap process, which sets the baseline level toward which the emission cap stabilizes in the long run. The parameter $B_Z$ is the adjustment speed coefficient of the emission cap process, which measures the intensity with which the policy maker corrects deviations of the current emission cap from the long‑term target. 

\begin{figure}[htbp]
	\centering
	\subfigure[Average emission process simulation with different values of $A_Z$.]{
		\includegraphics[width=0.445\linewidth]{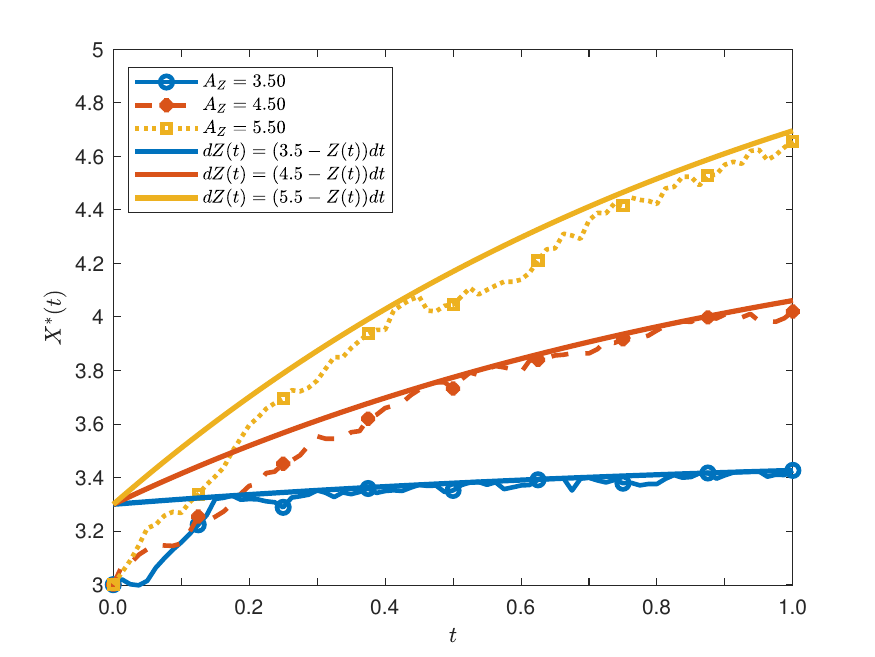}
		\label{Az-X}
	}
	\quad
	\subfigure[Abatement measure simulation with different values of the parameter $A_Z$.]{
		\includegraphics[width=0.445\linewidth]{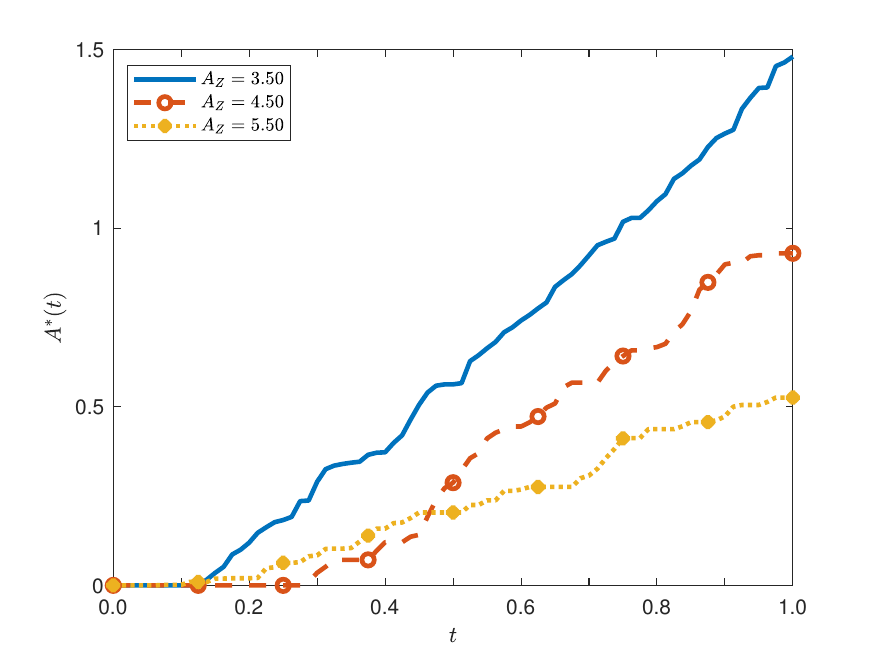}
		\label{Az-A}
	}\\
    \subfigure[Production strategy simulation with different values of the parameter $A_Z$.]{
		\includegraphics[width=0.445\linewidth]{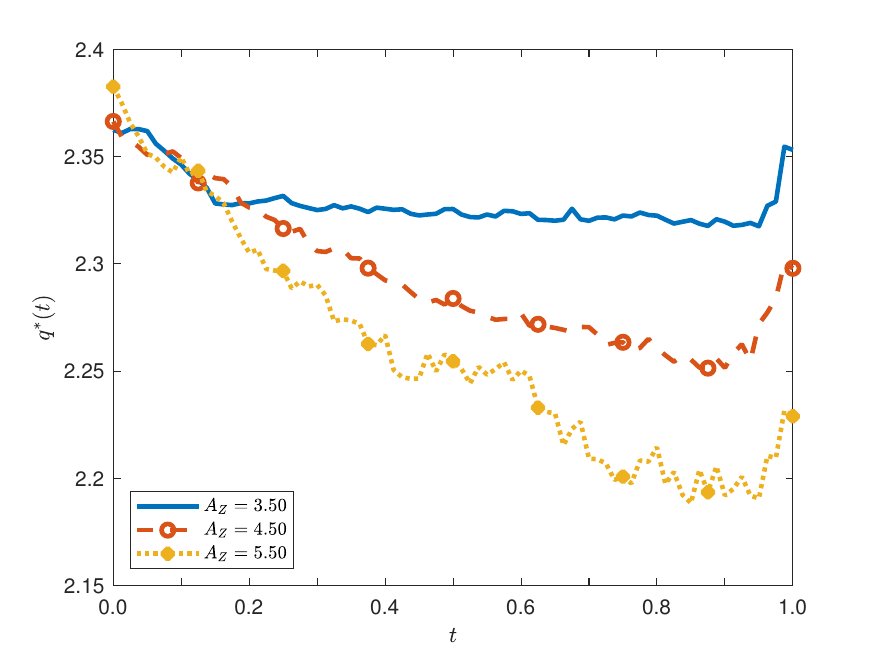}
		\label{Az-q}
	}
	\quad
	\subfigure[The leader's strategy simulation with different values of the parameter $A_Z$.]{
		\includegraphics[width=0.445\linewidth]{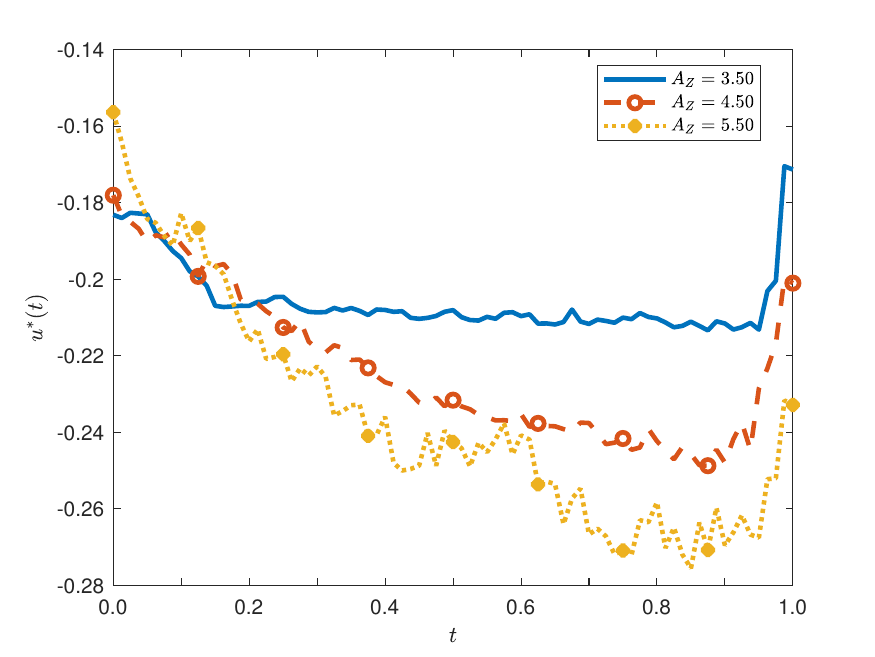}
		\label{Az-u}
	}
	\caption{Simulations w.r.t. different values of the parameter $A_Z$.}\label{fig:AZ}
\end{figure}

\quad For the simplicity of simulations, we simulate the homogeneous case by adopting the parameter values specified as production-emission conversion coefficient $\mu=1$,  volatility rate of average accumulated emission process $\sigma=0.1$, natural carbon absorption rate $\delta=0.1$,  unregulated baseline product price $a=10$, production cost parameter $c=2$, price adjustment cost parameter $c_{\rm P}=1$, abatement cost parameter $c_{\rm A}=0.5$,  environmental pollution penalty cost parameter $c_{\rm E}=0.4$, initial average cumulative carbon emission $x_0=3$, initial cumulative emission cap $z_0=3.3$.

\quad In what follows, we investigate how the steady‑state level coefficient \(A_Z\) and the adjustment speed coefficient \(B_Z\) of the emission cap process affect four key processes: the region's output \(q^*=(q^*(t))_{t\in[0,T]}\), the central regulator's price adjustment \(u^*=(u^*(t))_{t\in[0,T]}\), the abatement intensity \(A^*=(A^*(t))_{t\in[0,T]}\), and the average cumulative emission \(X^*=(X^*(t))_{t\in[0,T]}\). Numerical results show that changes in the two parameters induce opposite but internally consistent adjustments.

\begin{figure}[htbp]
	\centering
	\subfigure[Average emission process simulation with different values of the parameter $B_Z$.]{
		\includegraphics[width=0.445\linewidth]{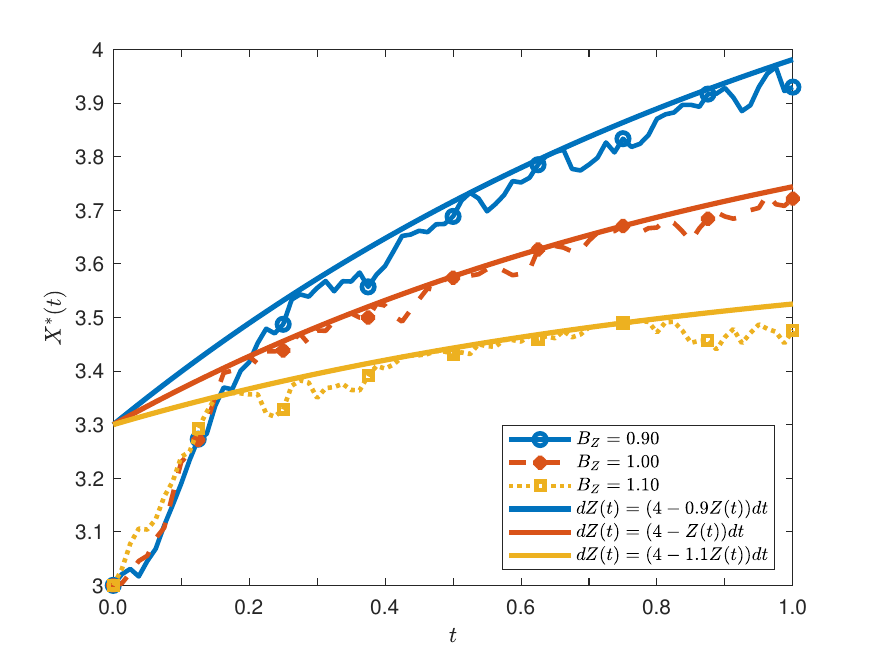}
		\label{Bz-X}
	}
	\quad
	\subfigure[Abatement measure simulation with different values of the parameter $B_Z$.]{
		\includegraphics[width=0.445\linewidth]{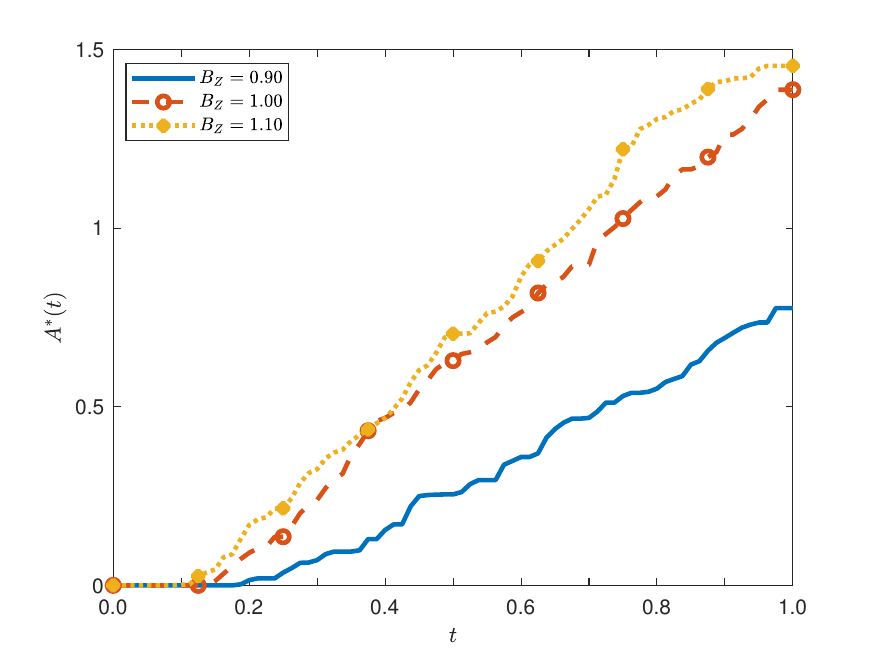}
		\label{Bz-A}
	}\\
    \subfigure[Production strategy simulation with different values of the parameter $B_Z$.]{
		\includegraphics[width=0.445\linewidth]{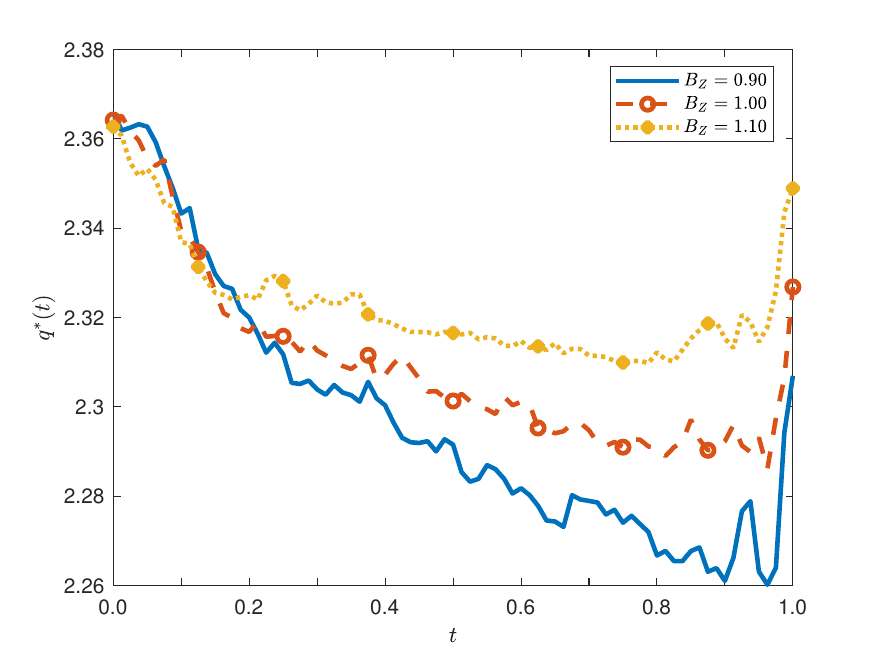}
		\label{Bz-q}
	}
	\quad
	\subfigure[The leader's strategy simulation with different values of the parameter $B_Z$.]{
		\includegraphics[width=0.445\linewidth]{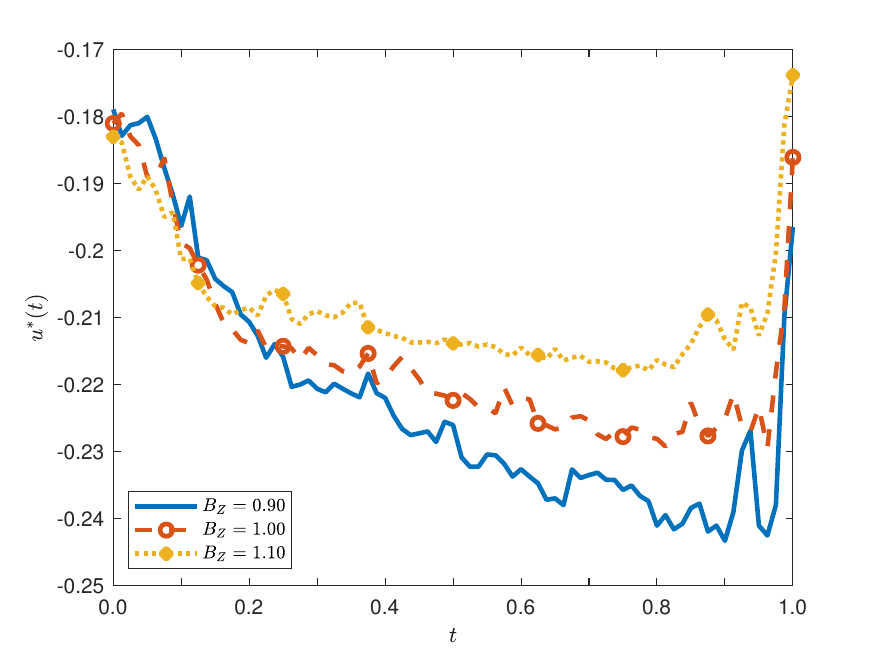}
		\label{Bz-u}
	}
	\caption{Simulations w.r.t. different values of the parameter $B_Z$.}\label{fig:BZ}
\end{figure}

\quad Figure~\ref{fig:AZ} examines the sensitivity of the average emission level $X^*$, leader's abatement strategy $A^*$ and the approximate Stackelberg equilibrium strategy pair $(q^*,u^*)$ with respect to variations in the steady-emission cap level coefficient $A_Z$, considering values of  $A_Z=3.5$ (blue line), $A_Z=4.5$ (red line) and $A_Z=5.5$ (yellow line), while $B_Z $ is chosen as $1$. For any given time $t$, 
fixing \(B_Z\) and increasing \(A_Z\) (i.e., the steady‑state equilibrium value of the emission cap, \(Z_{\infty}=A_Z/B_Z\), rises, and the overall emission constraint becomes looser) generally leads to a decreasing output \(q^*(t)\), an increasing absolute price adjustment \(|u^*(t)|\), and a decreasing abatement intensity \(A^*(t)\). In all cases, actual carbon emissions remain tight against the emission cap.

\quad Figure~\ref{fig:BZ} examines the sensitivity of the average emission level $X^{*}$, leader's abatement strategy $A^*$ and the approximate Stackelberg equilibrium strategy pair $(q^*,u^*)$ with respect to variations in the emission cap adjustment rate $B_Z$, considering values of $B_Z=0.9$ (blue line), $B_Z=1$ (red line) and $B_Z=1.1$ (yellow line), while $A_Z = 4$. For any given time $t$, 
fixing $A_Z$ and increasing $B_Z$ (i.e., the emission cap converges more quickly to a lower steady‑state equilibrium value, implying a tighter emission constraint and a more rapid policy adjustment pace) leads to an increasing output $q^*(t)$, a decreasing absolute price adjustment $|u^*(t)|$, and an increasing abatement intensity $A^*(t)$. Again, emissions consistently track the cap boundary.

\quad In both scenarios, the output process $q^*=(q^*(t))_{t\in[0,T]}$ exhibits a non‑monotonic pattern: it first declines over time and then rises near the terminal time $T$. This is a typical ``terminal effect" in finite‑horizon dynamic optimization. During the early and middle stages, the central regulator suppresses output via price adjustments to avoid exhausting future emission space and incurring high marginal abatement costs. As the horizon end approaches, the region anticipates weaker policy pressure and increases output to utilize remaining allowances and maximize terminal profits. The central regulator also slows price reductions in the terminal phase, further encouraging capacity release (as illustrated in Figures~\ref{Az-u} and \ref{Bz-u}).

\quad The economic interpretation is as follows. When the steady‑state level coefficient $A_Z$ increases, it indicates a higher equilibrium cap $Z_{\infty}=A_Z/B_Z$ and a looser constraint. Intuitively, a looser emission allowance should incentivize the region to expand production. However, the numerical results show the opposite: output decreases. This counter‑intuitive outcome arises from the central regulator's forward‑looking price adjustment. Facing a higher cap, the central regulator pre‑emptively cuts prices ($|u^*(t)|$ rises) to curb the region's incentive to increase output, thereby avoiding a rapid rise in emissions and costly high‑intensity abatement. Lower prices compress the region's profit margin, leading it to rationally reduce output. As a result, actual emissions fall, and the central regulator can lower abatement intensity $A^*(t)$, saving on increasing marginal abatement costs. Hence, a looser emission cap does not spur expansion but instead generates a ``contraction effect" through the central regulator's pre‑emptive price mechanism.

\quad In contrast, when the adjustment speed coefficient $B_Z$ increases (i.e., the emission cap tightens more quickly towards a lower level), the emission space becomes scarcer and highly predictable. For a fixed time $t$, facing the challenge of maintaining output under a tighter cap, the central regulator reduces the magnitude of price adjustments (i.e., $|u^*(t)|$ decreases w.r.t. $B_{Z}$), effectively raising the product price to improve the region's profit margin and encourage production. Simultaneously, the central regulator increases abatement intensity $A^*(t)$ to keep cumulative emissions within the lower cap, despite higher abatement costs. As a result, the region's output $q^*(t)$  rises, but the increase in abatement fully offsets the associated emissions, so total carbon emissions remain tightly controlled near the new, lower cap. Consequently, higher output is achieved with lower total emissions than under a loose constraint, albeit at the expense of increased abatement costs.


\quad These findings reveal key trade‑offs: raising the long‑run cap $A_Z$ may unexpectedly contract output due to the central regulator's pre‑emptive price response, whereas accelerating the tightening speed $B_Z$ if combined with appropriate price support – can achieve output growth and lower total emissions at the cost of higher abatement expenditure. Policy makers must carefully calibrate the emission cap design to balance environmental goals and economic efficiency.

\appendix 

\section{Proofs of Convergences \eqref{eq:momentestite0} and \eqref{eq:sup-i-bar} }\label{sec:proof}

In this Appendix, we provide the proof of the convergences \eqref{eq:momentestite0} and \eqref{eq:sup-i-bar}.
\vspace{-0.4em}
\begin{proof}[Proof of \eqref{eq:momentestite0}] 
Recall \eqref{eq:n-opt-X-F} and \eqref{eq:aux-MFG-followers}. We rewrite $X^{*,u,(n)}(t)=\widetilde{X}^{*,u,(n)}(t)-A^{*,u,(n)}(t)$ and $\bar{X}(t)=\widetilde{\bar{X}}(t)-\bar{A}(t)$ for $t\in[0,T]$. Here, the processes $\widetilde{X}^{*,u,(n)}=(\widetilde{X}^{*,u,(n)}(t))_{t\in[0,T]}$ and $\widetilde{\bar{X}}=(\widetilde{\bar{X}}(t))_{t\in[0,T]}$ satisfy respectively  the following SDEs, for $t\in[0,T]$,
\begin{align}
&d \widetilde{X}^{*,u,(n)}(t)=\bigg(\bigg(\mu \nu^{(n)}\left(\frac{1}{2 {c}}\right)u(t) -\delta\bigg) X^{*,u,(n)}(t)+\mu \nu^{(n)}\left(\frac{a}{2 {c}}\right)\bigg) d t\nonumber\\
&\hspace{6.5em}+\sigma X^{*,u,(n)}(t) d W(t),\label{eq:n-opt-X-tilde}\\
&d \widetilde{\bar{X}}(t)=\left(\left(\mu\nu\left(\frac{1}{2 {c}}\right) u(t) -\delta \right)\bar{X}(t)\right.\left.+\mu \nu\left(\frac{a}{2 {c}}\right)\right) d t+\sigma \bar{X}(t) d W(t)\label{eq:bar-X-tilde-dif}  
\end{align}
with the same initial conditions $\widetilde{X}^{*,u,(n)}(0)=x_0$ and $\widetilde{\bar{X}}(0)=x_0$. Then, we have 
\begin{align*}
&d \left(\widetilde{X}^{*,u,(n)}(t)-\widetilde{\bar{X}}(t)\right) =\left[\mu \left(\nu^{(n)}\left(\frac{a}{2 {c}}\right)-\nu \left(\frac{a}{2 {c}}\right) \right)+\mu \left(\nu^{(n)}\left(\frac{1}{2 {c}}\right)-\nu \left(\frac{1}{2 {c}}\right) \right)u(t) X^{*,u,(n)}(t)\right.\nonumber\\
&\qquad\left.+ \left(\mu\nu\left(\frac{1}{2 {c}}\right) u(t) -\delta \right) \left( {X}^{*,u,(n)}(t)- {\bar{X}}(t)\right)  \right] dt+\sigma \left( {X}^{*,u,(n)}(t)- {\bar{X}}(t)\right) d W(t)   
\end{align*} 
with $\widetilde{X}^{*,u,(n)}(0)-\widetilde{\bar{X}}(0)=0$. Hence, one has, for $t\in[0,T]$,
\begin{align}\label{diff:tilde-X-bar-star}
& \widetilde{X}^{*,u,(n)}(t)-\widetilde{\bar{X}}(t)=\mu \left(\nu^{(n)}\left(\frac{a}{2 {c}}\right)-\nu \left(\frac{a}{2 {c}}\right) \right)t+\int_{0}^{t} \Bigg[\mu \left(\nu^{(n)}\left(\frac{1}{2 {c}}\right)-\nu \left(\frac{1}{2 {c}}\right) \right) u(s) X^{*,u,(n)}(s)\nonumber\\
&~ + \left(\mu\nu\left(\frac{1}{ {c}}\right) u(s) -\delta \right) \left( {X}^{*,u,(n)}(s)- {\bar{X}}(s)\right)  \Bigg] ds+\sigma \int_{0}^{t}  \left( {X}^{*,u,(n)}(s)- {\bar{X}}(s)\right) d W(s).    
\end{align}
Set $Y^{(n)}(t):=\widetilde{X}^{*,u,(n)}(t)-\widetilde{\bar{X}}(t)$ for $t\in[0,T]$. By applying It\^{o}'s formula to $|Y^{(n)}(t)|^2$, together with Burkholder–Davis–Gundy (BDG) inequality, there exists a constant $C=C(T)>0$ such that
\begin{align*}
&\mathbb{E}\left[\sup_{0\le r\le t} \left|Y^{(n)}(r)\right|^2\right]\nonumber\\
&\leq\mathbb{E}\biggl[\int_0^t 2|Y^{(n)}(s)|\,\biggl| \mu \left(\nu^{(n)}\Bigl(\frac{a}{2 {c}}\Bigr)-\nu\Bigl(\frac{a}{2 {c}}\Bigr)\right) + \mu \left(\nu^{(n)}\Bigl(\frac{1}{2 {c}}\Bigr)-\nu\Bigl(\frac{1}{2 {c}}\Bigr)\right) u(s) X^{*,u,(n)}(s) \\
&\quad + \left(\mu\nu\Bigl(\frac{1}{2 {c}}\Bigr) u(s) - \delta\right) \bigl(X^{*,u,(n)}(s)-\bar{X}(s)\bigr) \biggr| ds\biggr] + \int_0^t \sigma^2 \mathbb{E}\bigl[\bigl(X^{*,u,(n)}(s)-\bar{X}(s)\bigr)^2\bigr] ds\\
&\quad + C\, \mathbb{E}\left[ \left( \int_0^t 4\sigma^2 Y^{(n)}(s)^2 \bigl(X^{*,u,(n)}(s)-\bar{X}(s)\bigr)^2 ds \right)^{1/2} \right].
\end{align*}
We also have from Cauchy's inequality that, for any $\epsilon>0$,
\begin{align*}
&\left( \int_0^t 4\sigma^2 |Y^{(n)}(s)|^2 |X^{*,u,(n)}(s)-\bar{X}(s)|^2 ds \right)^{1/2}\\
&\leq 2\sigma\left(\sup_{0\leq s\leq t}|Y^{(n)}(s)|^2\int_0^t|X^{*,u,(n)}(s)-\bar{X}(s)|^2\d s\right)^{\frac12}\\
&\leq \sigma\left(\xi\sup_{0\leq s\leq t}|Y^{(n)}(s)|^2+\frac{1}{\xi}\int_0^t|X^{*,u,(n)}(s)-\bar{X}(s)|^2\d s\right).
\end{align*}
Using the Lipschitz property of the Skorokhod mapping, it readily follows that, for any $t\in[0,T]$,
\begin{align}\label{eq;lipschitz_sk}
\left|X^{*,u,(n)}(t)-\bar{X}(t)\right|\leq \left|Y^{(n)}(t)\right|.
\end{align}
Inserting the above inequality into the above estimate of $\mathbb{E}[\sup_{0\le r\le t}|Y^{(n)}(r)|^2]$.  For any $u\in\mathbb{U}_0$, by applying Gronwall's inequality to conclude that, there exists a constant $C=C(T,m,M)$ such that
\begin{align*}
&\mathbb{E}\left[\sup_{t\in[0,T]}\left|Y^{(n)}(t)\right|^2\right]\leq C\E\left[\int_0^T\left|\mu \left(\nu^{(n)}\left(\frac{1}{2 {c}}\Bigr)-\nu\Bigl(\frac{1}{2 {c}}\right)\right) u(s) X^{*,u,(n)}(s)\right|^2ds\right],
\end{align*}
which converges to $0$ as $n\to\infty$ by dominated convergence theorem (DCT) since $u\in\mathbb{U}_0$ is uniformly $L^2$-bounded and $0<X^{*,u,(n)}(t)\leq Z(t)$ for all $t\in[0,T]$. The proof is therefore complete by using \eqref{eq;lipschitz_sk}.
\end{proof}

\vspace{-0.4cm}
\begin{proof}[Proof of \eqref{eq:sup-i-bar}]

Similar to the proof of \eqref{eq:momentestite0}, recall \eqref{eq:n-opt-X-F-i}, we  rewrite $X^{*,u,(n)}_{-i}(t)=\widetilde{X}^{*,u,(n)}_{-i}(t)-A^{*,u,(n)}_{-i}(t)$ for $t\in[0,T]$. Then, $\widetilde{X}_{-i}^{*,u,(n)}=(\widetilde{X}_{-i}^{*,u,(n)}(t))_{t\in[0,T]}$ satisfies that $\widetilde{X}^{*,u,(n)}_{-i}(0)=x_0$, and for $t\in(0,T]$,
\begin{align}\label{eq:n-opt-X-tilde-i} 
d\widetilde{X}^{*,u,(n)}_{-i}(t)=&\bigg[\frac{\mu}{n} q_i(t) +\mu\frac{n-1}{n} \nu^{(n)}_{-i}\left(\frac{1}{2 {c}}\right)u(t) X^{*,u,(n)}(t)+\mu \frac{n-1}{n}\nu^{(n)}_{-i}\left(\frac{a}{2 {c}}\right) -\delta X^{*,u,(n)}_{-i}(t)\bigg] d t\nonumber\\
&+\sigma X^{*,u,(n)}_{-i}(t) d W(t).
\end{align}
Recall that the process $\widetilde{\bar{X}}=(\widetilde{\bar{X}}(t))_{t\in[0,T]}$ is given by \eqref{eq:bar-X-tilde-dif}. Then, we have
\begin{align*}
&d\left(\widetilde{X}^{*,u,(n)}_{-i}(t)-\widetilde{\bar{X}}(t)\right)=\left(\frac{\mu}{n}\left(q_i(t)-\nu\left(\frac{a}{ {c}}\right)-\nu\left(\frac{1}{ {c}}\right) u(t) \bar{X}(t) \right)- \mu\frac{n-1}{n} \left(\nu^{(n)}_{-i}\left(\frac{a}{ {c}}\right)-\nu \left(\frac{a}{ {c}}\right) \right)\right.\\
&\left.-\delta \left( {X}_{-i}^{*,u,(n)}(t)- {\bar{X}}(t)\right)+\mu \frac{n-1}{n} \left(\nu^{(n)}_{-i}\left(\frac{1}{ {c}}\right)-\nu \left(\frac{1}{ {c}}\right) \right)u(t) \bar{X}(t)\right.\\
&\left.+ \mu\frac{n-1}{n}\nu_{-i}^{(n)}\left(\frac{1}{ {c}}\right) u(t)   \left( {X}^{*,u,(n)}(t)- {\bar{X}}(t)\right)\right) dt+\sigma \left( {X}^{*,u,(n)}_{-i}(t)- {\bar{X}}(t)\right) d W(t),  
\end{align*} 
where the initial data $\widetilde{X}^{*,u,(n)}_{-i}(0)-\widetilde{\bar{X}}(0)=0$. Hence, by following the similar argument of the proof of \eqref{eq:momentestite0} and utilizing the fact $\nu^{(n)}_{-i}(\frac{a}{2 {c}})\to\nu(\frac{a}{2 {c}})$ and $\nu^{(n)}_{-i}(\frac{1}{2 {c}})\to\nu(\frac{1}{2 {c}})$ as $n\to\infty$, we arrive at
\begin{align*}
\lim_{n\to\infty} \mathbb{E} \left[ \sup_{t\in[0,T]}\left| {X}^{*,u,(n)}_{-i}(t)- {\bar{X}}(t) \right|^2\right]=0.    
\end{align*}
Thus, the proof of the convergence is complete.
\end{proof}



\end{document}